\newlist{condenum}{enumerate}{1}
\newcounter{intro}
\newtheorem{thm}[equation]{Theorem}
\newtheorem{prop}[equation]{Proposition}
\newtheorem{lem}[equation]{Lemma}
\newtheorem{cor}[equation]{Corollary}
\theoremstyle{definition}
\newtheorem{ex}[equation]{Example}
\theoremstyle{remark}
\newtheorem*{rem}{Remark}
\numberwithin{equation}{section}
\newcommand{\FF}{\mathbb{F}}
\newcommand{\CC}{\mathbb{C}}
\def\mydefb#1{\expandafter\def\csname #1#1#1\endcsname{\mathcal{#1}}}
\def\mydefallb#1{\ifx#1\mydefallb\else\mydefb#1\expandafter\mydefallb\fi}
\newcommand{\calK}{\mathcal{K}}
\newcommand{\UT}{\mathrm{UT}}
\newcommand{\GL}{\mathrm{GL}}
\newcommand{\Mat}{\mathrm{Mat}}
\renewcommand{\epsilon}{\varepsilon}
\newcommand{\spanning}{\operatorname{-span}}
\renewcommand{\hom}{\operatorname{Hom}}
\renewcommand{\setminus}{-}
\let\shortto\to
\renewcommand{\to}{\longrightarrow}
\let\shortmapsto\mapsto
\renewcommand{\mapsto}{\longmapsto}
\renewcommand{\mapsfrom}{\longmapsfrom}
\newcommand{\cf}{\mathsf{cf}}
\newcommand{\scf}{\mathsf{scf}}
\newcommand{\cfUT}{\mathbf{cf}}
\newcommand{\ind}{\operatorname{Ind}}
\newcommand{\res}{\operatorname{Res}}
\newcommand{\infl}{\operatorname{Inf}}
\newcommand{\defl}{\operatorname{Def}}
\newcommand{\resf}{\operatorname{Resf}}
\newcommand{\asc}{\operatorname{asc}}
\newcommand{\st}{\operatorname{st}}
\newcommand{\cano}{\operatorname{cano}}
\newcommand{\w}[1]{w_{#1}}
\newcommand{\ordinalsum}{\mathchoice{
\tikz[baseline = -0.2em, inner sep = 2pt]{\node at (0, 0) (+) {$+$}; \node[above] at (+) {$\rightarrow$};}}{
\tikz[baseline = -0.2em, inner sep = 2pt]{\node at (0, 0) (+) {$+$}; \node[above] at (+) {$\rightarrow$};}}{
\tikz[baseline = -0.4*0.2em, inner sep = 1.2pt, scale = 0.3]{\node at (0, 0) (+) {$\scriptstyle +$}; \node[above] at (+) {$\scriptstyle\rightarrow$};}}{
\tikz[baseline = -0.25*0.2em, inner sep = 0.7pt]{\node at (0, 0) (+) {$\scriptscriptstyle +$}; \node[above] at (+) {$\scriptscriptstyle\rightarrow$};}}}
\newcommand{\ordinalsumsh}[1]{\mathchoice{
\tikz[baseline = -0.2em, inner sep = 2pt]{\node at (0, 0) (+) {$+$}; \node[above] at (+) {$\rightarrow$}; \node[below right] at (+) {$\scriptstyle #1$};}}{
\tikz[baseline = -0.2em, inner sep = 2pt]{\node at (0, 0) (+) {$+$}; \node[above] at (+) {$\rightarrow$}; \node[below right] at (+) {$\scriptstyle #1$};}}{
\tikz[baseline = -0.4*0.2em, inner sep = 1.2pt, scale = 0.3]{\node at (0, 0) (+) {$\scriptstyle +$}; \node[above] at (+) {$\scriptstyle\rightarrow$}; \node[below right] at (+) {$\scriptscriptstyle #1$};}}{
\tikz[baseline = -0.25*0.2em, inner sep = 0.7pt]{\node at (0, 0) (+) {$\scriptscriptstyle +$}; \node[above] at (+) {$\scriptscriptstyle\rightarrow$}; \draw (a) node[below right, scale = 0.7] {$\scriptscriptstyle #1$};}}}
\newcommand{\daggeraut}{\mathfrak{d}}
\newcommand{\titsprod}{\wedge}
\newcommand{\longw}{\tilde{w}}
\newcommand{\ASC}{\operatorname{Asc}}
\newcommand{\EQ}{\operatorname{Eq}}
\newcommand{\INV}{\operatorname{Inv}}
\newcommand{\natorder}{\mathcal{N}\mathcal{O}}
\renewcommand{\S}[1]{S_{#1}}
\newcommand{\UL}{\operatorname{UL}}
\newcommand{\UP}{\operatorname{UP}}
\newcommand{\UR}{\operatorname{UR}}
\renewcommand{\L}[2][n]{L_{#2}}
\renewcommand{\P}[2][n]{P_{#2}}
\newcommand{\R}[2][n]{R_{#2}}
\newcommand{\permchar}{\overline{\chi}}
\newcommand{\permind}{\overline{\delta}}
\begin{document}

\title{A $\GL(\FF_{q})$-compatible Hopf algebra of unitriangular class functions}
\author{Lucas Gagnon}
\date{\today}
\maketitle
\begin{abstract}
This paper constructs a novel Hopf algebra $\mathsf{cf}(\mathrm{UT}_{\bullet})$ on the class functions of the unipotent upper triangular groups $\mathrm{UT}_{n}(\mathbb{F}_{q})$ over a finite field.  
This construction is representation theoretic in nature and uses the machinery of Hopf monoids in the category of vector species.  
In contrast with a similar known construction, this Hopf algebra has the property that induction to the finite general linear group induces a homomorphism to Zelevinsky's Hopf algebra of $\mathrm{GL}_{n}(\mathbb{F}_{q})$ class functions.  
Furthermore, $\mathsf{cf}(\mathrm{UT}_{\bullet})$ contains a Hopf subalgebra which is isomorphic to a known combiantorial Hopf algebra, previously used to prove a conjecture about chromatic quasisymmetric functions. 
Some additional Hopf algebraic properties are also established.
\end{abstract}
{\small \textbf{Keywords} Hopf, unipotent, general linear, group, character, induction}

\section{Introduction}
\label{sec:intro}

A key structural result for finite groups of Lie type is the Bruhat decomposition
\[
G = \bigsqcup_{w \in W} UTwU,
\]
which builds $G$ from elements from the Weyl group $W$, a torus $T$, and a maximal unipotent subgroup $U$.  
This remains a useful heuristic when considering complex representations of $G$.  
Deligne--Lusztig theory uses the character theory of tori and Weyl groups to give a taxonomy for the irreducible characters of $G$ and key information like degree~\cite{lusztig}, but is not overly constructive and omits some character values.  
A complementary method is to induce representations directly from the maximal unipotent subgroup $U$.  
The latter approach is not generally well understood, but particularly nice examples like the Gelfand--Graev representation~\cite{GelGra} and generalizations~\cite{Kawanaka, Zel} have led to advances in the representation theory of $G$; relevant examples include the constructions in~\cite{An2, Ja} of the irreducible unipotent representations for the finite general linear groups.

This paper is motivated by the desire to better understand induction from $U$ to $G$, and makes significant progress for the case of the finite general linear groups.    
The character theory of $\GL_{n} = \GL_{n}(\FF_{q})$ can be understood as a Hopf algebra structure on the tower of class functions
 \[
\cf(\GL_{\bullet}) = \bigoplus_{n \ge 0} \cf(\GL_{n}),
\]
for the general linear groups as described by Zelevinsky in~\cite{Zel}; see Section~\ref{sec:GLalg}.  
In this context, the Deligne--Lusztig framework can be seen as a collection of homomorphisms between a Hopf algebra $\cf(S_{\bullet})$ of class functions for the symmetric groups and $\cf(\GL_{\bullet})$.  
The structure of $\cf(S_{\bullet})$ is exceptionally well understood via the theory of symmetric functions~\cite{Mac}, and with the aforementioned homomorphisms the combinatorics of symmetric functions also pervade the structure of $\cf(\GL_{\bullet})$.

The main results of this paper are given in Sections~\ref{sec:Hopfalgebra} and~\ref{sec:GL} and concern an analogous connection between $\GL_{n}$ and its maximal subgroup $\UT_{n}$ of unipotent upper triangular matrices.  
Section~\ref{sec:cfalgebra} defines a Hopf algebra structure on $\cf(\UT_{\bullet})$, the tower of class functions for the unipotent upper triangular groups.  
The operations in this Hopf algebra are given by representation theoretic maps which are reminiscent of the Harish-Chandra induction and restriction maps used for $\cf(\GL_{\bullet})$.    
Theorem~\ref{thm:inductionhomomorphism} demonstrates the utility of this Hopf structure by showing that the map
\[
\ind^{\GL}_{\UT} = \bigoplus_{n \ge 0} \ind^{\GL_{n}}_{\UT_{n}}: \cf(\UT_{\bullet}) \to \cf(\GL_{\bullet})
\]
is a Hopf algebra homomorphism.  
Taken together, these results parallel the Hopf algebraic description of Deligne--Lusztig theory for $\GL_{n}$ above, giving the `$U$' part of a kind of Bruhat decomposition for $\cf(\GL_{\bullet})$.  

It should be noted that the Hopf algebra in Theorem~\ref{thm:HopfAlgebra} is similar to another Hopf algebra structure on the space $\cf(\UT_{\bullet})$ implicit in the work of~\cite{AgBerTh} (see also~\cite{AgEtAl}).
The two are similar in many respects: both Hopf algebras come from Hopf monoids (see Sections~\ref{sec:Hopfprelims} and~\ref{sec:cfmonoid}) and the two are isomorphic as algebras.  
However, the coalgebra structures are distinct, so the two are not isomorphic: the Hopf algebra in~\cite{AgBerTh} is cocommutative but my Hopf algebra is not; see Theorem~\ref{thm:noncocomm}.  

Section~\ref{sec:subHopf} gives a few more results which incorporate the combinatorial representation theory of $\UT_{n}$ into the Hopf algebra $\cf(\UT_{\bullet})$.  
The past 15 years have seen a rapid development in the representation theory of unipotent groups, with focus shifting from the often inscrutable irreducible characters to supercharacters: families of well behaved but not necessarily irreducible characters.  
For $\UT_{n}$ and other maximal unipotent subgroups, these developments have taken a combinatorial turn in which families of supercharacters are indexed by set partitions and related objects~\cite{AlTh20, Andre, AnNe, An, Benedetti, Yan}.  
This paper will consider one such combinatorial supercharacter theory for $\UT_{n}$ from~\cite{AlTh20} with supercharacters are indexed by a family of Catalan objects known as natural unit interval orders.

Writing $\scf(\UT_{n}) \subseteq \cf(\UT_{n})$ for the subspace spanned by aforementioned Catalan supercharacters, Corollary~\ref{cor:scfsubhopf} shows that 
\[
\scf(\UT_{\bullet}) = \bigoplus_{n \ge 0} \scf(\UT_{n}),
\]
is a sub-Hopf algebra of $\cf(\UT_{\bullet})$, and computes explicit structure constants for $\scf(\UT_{\bullet})$.  
Even describing a basis of $\cf(\UT_{\bullet})$ is extremely difficult~\cite{GudEtAl}, so this provides a much-needed layer of computability to earlier results: for example, these structure constants are essential to the previously mentioned noncocommutativity result (Theorem~\ref{thm:noncocomm}).  

Remarkably, the sub-Hopf algebra $\scf(\UT_{\bullet})$ is already known: Theorem~\ref{thm:Dhom} shows that it is isomorphic to a Hopf algebra defined in~\cite{GP16} to study the chromatic quasisymmetric function of~\cite{ShWa}.  
This connection suggests that the combinatorics of the Hopf algebra in~\cite{GP16} should manifest in the representation theory of the finite general linear groups, which is confirmed in a companion work.  
The paper~\cite{Gagb} builds directly on the results stated here to give a combinatorial method for computing the map $\ind^{\GL}_{\UT}$.  
Applying this method to the characters in $\scf(\UT_{\bullet})$ gives rise to a family of $\GL_{n}$-modules which categorify the chromatic quasisymmetric function, and a second family of unipotent $\GL_{n}$-modules corresponding to the vertical strip LLT polynomial of~\cite{HHL}.  
These results are quite unexpected and demonstrate the potential of the Hopf algebraic connection constructed in this paper;~\cite{Gagb} will be the first of several papers using this framework to investigate the behavior of characters of $\UT_{n}$ under induction to $\GL_{n}$.  

The results in paper also set the stage for similar approaches for other finite groups of Lie type.
In particular, the class functions of the finite orthogonal and symplectic groups have a Hopf module-like structure over $\cf(\GL_{\bullet})$ which has been formalized in the ``2-compatible Hopf modules'' of~\cite{SHELLEY-ABRAHAMSON} and the ``twisted PSH-modules'' of~\cite{vanLeeuwen}.  
It should be possible to construct analogous module-like structures over $\cf(\UT_{\bullet})$ for the class functions of the unipotent orthogonal and symplectic groups in such a way that induction gives a structure preserving map.  
For other groups, more work is require to emulate the results of this paper, but these may nevertheless serve as a blueprint.  
Moreover,~\cite{Gagb} demonstrates that this process is likely worthwhile, especially in cases where unipotent representations are not fully understood.  

The remaining sections of the paper are organized as follows.  Sections~\ref{sec:Hopfprelims} and~\ref{sec:repprelims} cover the relevant preliminary material.  Sections~\ref{sec:cfmonoid} and~\ref{sec:Hopfalgebra} construct the Hopf algebra $\cf(\UT_{\bullet})$, along with a related Hopf monoid in the category of vector species, $\cfUT$.  Section~\ref{sec:GL} concerns the Hopf algebra $\cf(\GL_{\bullet})$ and the induction map $\ind^{\GL}_{\UT}$.  Finally, Section~\ref{sec:subHopf} presents the sub-Hopf algebra $\scf(\UT_{\bullet})$ and related results.

\paragraph{Acknowledgments}

Along with~\cite{Gagb}, this paper is part of a Ph.D.~thesis undertaken at the University of Colorado Boulder, and I am extremely grateful for the support and insights of my advisor Nathaniel Thiem throughout this process.  Thanks are also due to Richard Green and Nantel Bergeron for their helpful comments on previous drafts.

\section{Hopf monids}
\label{sec:Hopfprelims}
\newcommand{\set}{\mathrm{set}^{\times}}
\newcommand{\id}{\operatorname{id}}

A Hopf monoid is a kind of categorical generalization of a Hopf algebra.  
As it turns out, the structure maps for the Hopf algebra $\cf(\UT_{\bullet})$ are best described as part of a Hopf monoid; see  Secion~\ref{sec:cfmonoid} and Section~\ref{sec:Fockfunctors} for specifics.  
This section gives the key definitions and examples of Hopf monoids used in later results, following~\cite{AgMahshort} and~\cite{AgMahlong}.  

\subsection{Set compositions}
\label{sec:compositions}

Let $I$ be a finite set.  A \emph{set composition} of $I$ is a sequence of pairwise disjoint nonempty sets
\[
A = (A_{1}, \ldots, A_{\ell}) \qquad \text{with} \qquad I = \bigsqcup_{r = 1}^{\ell} A_{r}.
\]
Call each $A_{r}$ a \emph{part} of $A$, and write $A \vDash I$ and $\ell(A) = \ell$.  The empty set has one set composition with zero parts.

For disjoint sets $I$ and $J$, the \emph{concatenation} of set compositions $A \vDash I$ and $B \vDash J$ is
\[
A \cdot B = (A_{1}, \ldots, A_{\ell(A)}, B_{1}, \ldots, B_{\ell(B)}) \vDash I \sqcup J.
\]
On the other hand, the \emph{restriction} of a set composition $C \vDash I \sqcup J$ to $I$ is the set composition
\[
C|_{I} = (C_{1} \cap I, \ldots, C_{\ell(C)} \cap I)^{\sharp} \vDash I,
\]
where $\sharp$ denotes the removal of $\emptyset$ from a sequence; define $C|_{J}$ in the same way.

The set compositions of $I$ are ordered by refinement: $B$ \emph{refines} $A$ ($A$ \emph{coarsens} $B$) if 
\[
B = B|_{A_{1}} \cdot B|_{A_{2}} \cdots B|_{A_{\ell(A)}},
\]
so that each part of $A$ is the union of consecutive parts of $B$.  The \emph{Tits product} of set compositions $A, B \vDash I$ is the coarsest set partition refining $A$ with each part a subset of some part of $B$:
\[
A \titsprod B = B|_{A_{1}} \cdot B|_{A_{2}} \cdots B|_{A_{\ell(A)}}.
\]
This composition may not refine $B$, but $B \titsprod A$ has the same parts in a possibly different order so as to refine $B$.  If $B$ refines $A$, then $A \titsprod B = B =  B \titsprod A$. 

\subsection{Vector Species and Hopf Monoids}
\label{sec:hopfmonoiddiagrams}

This section defines connected Hopf monoids.  The connectedness assumption leads to a simplified but equivalent variant of the usual definition, as found in~\cite[Proposition 55]{AgMahshort}.

Let $\set$ denote the category of finite sets with morphisms given by bijections, and $\mathrm{vect}_{\CC}$ the space of finite-dimensional $\CC$-vector spaces with morphisms given by linear transformations.  A $\CC$-\emph{vector species} is a functor 
\[
\mathbf{a}: \mathrm{set}^{\times} \to \mathrm{vect}_{\CC}.
\]
Say that $\mathbf{a}$ is \emph{connected} if $\mathbf{a}[\emptyset] = \CC$.  For each set composition $A = (A_{1}, A_{2}, \ldots, A_{\ell})$ of a finite set $I$, write
\[
\mathbf{a}(A) = \mathbf{a}[A_{1}] \otimes \mathbf{a}[A_{2}] \otimes \cdots \otimes \mathbf{a}[A_{\ell}].
\]

A \emph{connected Hopf monoid} is a connected vector species $\mathbf{h}$ equipped with two families of linear maps for each $A \vDash I$ with $I \in \set$,
\[
\mu_{A}: \mathbf{h}(A) \to \mathbf{h}[I]
\qquad \text{and} \qquad
\Delta_{A}:  \mathbf{h}[I] \to \mathbf{h}(A),
\]
with which a number of diagrams must commute; they are as follows.

\paragraph{Associativity and coassociativity:}

For set compositions $A$ and $B$ of a finite set $I$ with $B$ refining $A$, the diagrams
\[
\begin{tikzpicture}[baseline = -1.1cm]
\draw (0, 0) node[] (B) {$\mathbf{h}(B)$};
\draw (5, 0) node[] (A)  {$\mathbf{h}(A)$};
\draw (5, -2) node[] (I) {$\mathbf{h}[I]$};
\draw[thick, ->] (B) -- node[below left] {$\mu_{B}$} (I);
\draw[thick, ->] (B) -- node[above] {$\mu_{B|_{A_{1}}} \otimes \cdots \otimes \mu_{B|_{A_{\ell}}}$} (A);
\draw[thick, ->] (A) -- node[right] {$\mu_{A}$} (I);
\end{tikzpicture}
\quad \text{and} \quad
\begin{tikzpicture}[baseline = -1.1cm]
\draw (5, -2) node[] (B) {$\mathbf{h}(B)$};
\draw (0, -2) node[] (A)  {$\mathbf{h}(A)$};
\draw (0, 0) node[] (I) {$\mathbf{h}[I]$};
\draw[thick, <-] (B) -- node[above] {$\Delta_{B}$} (I);
\draw[thick, <-] (B) -- node[below] {$\Delta_{B|_{A_{1}}} \otimes \cdots \otimes \Delta_{B|_{A_{\ell}}}$} (A);
\draw[thick, <-] (A) -- node[left] {$\Delta_{A}$} (I);
\end{tikzpicture}
\]
must commute.

\paragraph{Compatibility:}

For each finite set $I$ and set compositions $A, B \vDash I$, the diagram
\[
\begin{tikzpicture}
\draw (0, -2) node[] (AB) {$\mathbf{h}(A \titsprod B)$};
\draw (6, -2) node[] (BA) {$\mathbf{h}(B\titsprod  A)$};
\draw (0, 0) node[] (A) {$\mathbf{h}(A)$};
\draw (3, 0) node[] (I) {$\mathbf{h}[I]$};
\draw (6, 0) node[] (B) {$\mathbf{h}(B)$};
\draw[thick, ->] (AB)  -- node[above] {$\cong$} (BA);
\draw[thick, ->] (A)  -- node[left] {$\Delta_{B|_{A_{1}}} \otimes \cdots \otimes \Delta_{B|_{A_{\ell}}}$} (AB);
\draw[thick, <-] (B)  -- node[right] {$\mu_{B|_{A_{1}}} \otimes \cdots \otimes \mu_{B|_{A_{\ell}}}$} (BA);
\draw[thick, ->] (A)  -- node[above] {$\mu_{A}$} (I);
\draw[thick, ->] (I)  -- node[above] {$\Delta_{B}$} (B);
\end{tikzpicture}
\]
must commute, where $\cong$ denotes the isomorphism given by rearranging tensor factors.

\paragraph{Naturality:} For each bijection $\sigma: I \shortto J$ and set composition $A = (A_{1}, \ldots, A_{\ell})$ of $I$, let ${}^{\sigma}A = (\sigma(A_{1}), \ldots, \sigma(A_{\ell}))$; with this notation the diagrams
\[
\begin{tikzpicture}[baseline = -1.1cm]
\draw (0, 0) node[] (A) {$\mathbf{h}(A)$};
\draw (0, -2) node[] (I) {$\mathbf{h}[I]$};
\draw (5.5, 0) node[] (sA) {$\mathbf{h}({}^{\sigma}A)$};
\draw (5.5, -2) node[] (sI) {$\mathbf{h}[J]$};
\draw[thick, ->] (A) -- node[left] {$\mu_{A}$} (I);
\draw[thick, ->] (A) -- node[above] {$\mathbf{h}[\sigma|_{A_{1}}] \otimes \cdots \otimes \mathbf{h}[\sigma|_{A_{\ell}}]$} (sA);
\draw[thick, ->] (I) -- node[above] {$\mathbf{h}[\sigma]$} (sI);
\draw[thick, ->] (sA) -- node[right] {$\mu_{{}^{\sigma}A}$} (sI);
\end{tikzpicture}
\quad \text{and}\quad
\begin{tikzpicture}[baseline = -1.1cm]
\draw (0, -2) node[] (A) {$\mathbf{h}(A)$};
\draw (0, 0) node[] (I) {$\mathbf{h}[I]$};
\draw (5.5, -2) node[] (sA) {$\mathbf{h}({}^{\sigma}A)$};
\draw (5.5, 0) node[] (sI) {$\mathbf{h}[J]$};
\draw[thick, <-] (A) -- node[left] {$\Delta_{A}$} (I);
\draw[thick, ->] (A) -- node[above] {$\mathbf{h}[\sigma|_{A_{1}}] \otimes \cdots \otimes \mathbf{h}[\sigma|_{A_{\ell}}]$} (sA);
\draw[thick, ->] (I) -- node[above] {$\mathbf{h}[\sigma]$} (sI);
\draw[thick, <-] (sA) -- node[right] {$\Delta_{{}^{\sigma}A}$} (sI);
\end{tikzpicture}
\]
must commute.

\subsection{Example: partial and total orders}
\label{sec:monoidexamples}

This section presents two standard Hopf monoids from~\cite{AgMahshort}, giving concrete examples of the structures describe in Section~\ref{sec:hopfmonoiddiagrams}.  Much of the content will be used in later sections.

For a finite set $I$, let $\PPP\OOO[I]$ denote the set of partial orders of $I$, so that $\pi \in \PPP\OOO[I]$ is a reflexive, antisymmetric, and transitive relation $\pi \subseteq I \times I$.  
The \emph{species of partial orders} is the connected vector species $\mathbf{po}$ defined by
\[
\mathbf{po}[I] = \CC\spanning(\PPP\OOO[I])
\qquad\text{and}\qquad
\begin{array}{rcl}
\mathbf{po}[\sigma]: \mathbf{po}[I] & \to & \mathbf{po}[J] \\[0.5em]
\pi & \mapsto & {}^{\sigma} \pi 
\end{array}
\]
for each finite set $I$ and bijection $\sigma: I \shortto J$, where
\[
{}^{\sigma} \pi = \big\{ \big( \sigma(i), \sigma(j) \big) \;\big|\; (i, j) \in \pi \big\}.
\]

Some additional notation is required to define a Hopf monoid structure on $\mathbf{po}$.  For a finite set $I$, a partial order $\pi \in \PPP\OOO[I]$, and a subset $J \subseteq I$, define the \emph{restriction} of $\pi$ to be
\[
\pi|_{J} = \big\{(i, j) \in \pi \;\big|\; i, j \in J \big\} \in \PPP\OOO[J].
\]
Also, for disjoint finite sets $I_{1}$ and $I_{2}$, define the \emph{ordinal sum} of a partial orders $\pi_{1} \in \PPP\OOO[I_{1}]$, $\pi_{2} \in \PPP\OOO[I_{2}]$ to be
\[
\pi_{1} \ordinalsum \pi_{2} = \pi_{1} \sqcup \pi_{2} \sqcup I_{1} \times I_{2},
\]
which is an element of $\PPP\OOO[I_{1} \sqcup I_{2}]$; this operation is associative.  The ordinal sum operation is sometimes written as $\oplus$, but this symbol has another meaning in this paper. 

\begin{ex}
Let $I_{1} = \{a, b, c\}$, $I_{2} = \{d\}$.  Then we have
\[
\begin{tikzpicture}[scale = 0.65, baseline = 0.65*0.8cm]
\draw (0, 0.2) node[inner sep = 0.05cm] (1) {$\scriptstyle a$};
\draw (1, 0.2) node[inner sep = 0.05cm] (2) {$\scriptstyle b$};
\draw (1, 1) node[inner sep = 0.05cm] (3) {$\scriptstyle c$};
\draw (0.5, 1.8) node[inner sep = 0.05cm] (4) {$\scriptstyle d$};
\draw (1) -- (4);
\draw (2) -- (3);
\draw (3) -- (4);
\end{tikzpicture} \;\bigg|_{I_{1}} = \begin{tikzpicture}[scale = 0.65, baseline = 0.65*0.4cm]
\draw (0, 0.6) node[inner sep = 0.05cm] (1) {$\scriptstyle a$};
\draw (0.75, 0.2) node[inner sep = 0.05cm] (2) {$\scriptstyle b$};
\draw (0.75, 1) node[inner sep = 0.05cm] (3) {$\scriptstyle c$};
\draw (2) -- (3);
\end{tikzpicture} 
\qquad \text{and}\qquad
\left(\begin{tikzpicture}[scale = 0.65, baseline = 0.65*-0.15cm]
\draw (0, 0) node (1) {$\scriptstyle a$};
\draw (0.5, 0.05) node (2) {$\scriptstyle b$};
\draw (1, 0) node (3) {$\scriptstyle c$};
\end{tikzpicture}\right)
\ordinalsum
\left(\begin{tikzpicture}[scale = 0.65, baseline = -0.65*0.2cm]
\draw (0, 0) node[inner sep = 0.05cm] (4) {$\scriptstyle d$};
\end{tikzpicture}\right)
=
\begin{tikzpicture}[scale = 0.65, baseline = 0.65*0.3cm]
\draw (0, 0) node[inner sep = 0.05cm] (1) {$\scriptstyle a$};
\draw (0.5, 0.05) node[inner sep = 0.05cm] (2) {$\scriptstyle b$};
\draw (1, 0) node[inner sep = 0.05cm] (3) {$\scriptstyle c$};
\draw (0.5, 1) node[inner sep = 0.05cm] (4) {$\scriptstyle d$};
\draw (1) -- (4);
\draw (2) -- (4);
\draw (3) -- (4);
\end{tikzpicture}.
\]
\end{ex}

\begin{prop}[{\cite[Proposition 3]{An}}]
\label{prop:posetmonoid}
The species $\mathbf{po}$ is a connected Hopf monoid; for each set composition $A = (A_{1}, \ldots, A_{\ell})$ of each finite set $I$ the product map is given by
\[
\mu_{A}(\pi_{1} \otimes \cdots \otimes \pi_{\ell} ) =  \pi_{1} \ordinalsum \cdots \ordinalsum \pi_{\ell} 
\]
for $\pi_{i} \in \PPP\OOO[A_{i}]$, $1 \le i \le \ell$, and for $\pi \in \PPP\OOO[I]$, the coproduct map is given by
\[
\Delta_{A}(\pi) = \pi|_{A_{1}} \otimes \cdots \otimes \pi|_{A_{\ell}}.
\]
\end{prop}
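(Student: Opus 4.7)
The plan is to verify in turn the four families of diagrams in Section~\ref{sec:hopfmonoiddiagrams}, namely associativity, coassociativity, compatibility, and naturality, by reducing each one to an elementary identity about ordinal sums and restrictions of partial orders. Since every map involved is linear, it suffices in each case to chase a single pure tensor $\pi_{1} \otimes \cdots \otimes \pi_{\ell}$ of partial orders around the relevant diagram.

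For associativity and coassociativity I would first record two facts. The ordinal sum $\ordinalsum$ is associative in the strict sense, so for $B$ refining $A$ the element $\mu_{B}(\pi_{1} \otimes \cdots \otimes \pi_{\ell(B)})$ can be regrouped according to the parts of $A$, which is exactly the assertion encoded in the first triangle. Dually, if $J \subseteq A_{r} \subseteq I$ and $\pi \in \PPP\OOO[I]$, then $(\pi|_{A_{r}})|_{J} = \pi|_{J}$; iterating this through the parts of $B$ inside each $A_{r}$ gives the coassociativity triangle. Naturality is similarly routine: bijections of finite sets preserve the relations $\sqcup$, $\cap$, and $I_{1} \times I_{2}$, so $\mathbf{po}[\sigma]$ intertwines both ordinal sums and restrictions on the nose.

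The main content, and the main obstacle, is the compatibility hexagon, which I would handle by isolating the single pointwise identity
\[
(\pi_{1} \ordinalsum \cdots \ordinalsum \pi_{\ell(A)}) \big|_{B_{j}} \;=\; \pi_{1}|_{B_{j} \cap A_{1}} \;\ordinalsum\; \cdots \;\ordinalsum\; \pi_{\ell(A)}|_{B_{j} \cap A_{\ell(A)}},
\]
valid for any $\pi_{r} \in \PPP\OOO[A_{r}]$ and any $B_{j} \subseteq I$. The left side is what one gets by following $\Delta_{B} \circ \mu_{A}$ along the top of the hexagon and reading off the $j$-th tensor factor. Chasing down the left edge, across, and up the right edge, the $(\Delta_{B|_{A_{r}}})$-step produces the tensors $\pi_{r}|_{B_{j} \cap A_{r}}$ indexed by pairs $(r,j)$, the rearrangement isomorphism reindexes them by $(j,r)$ to land in $\mathbf{po}(B \titsprod A)$, and each $\mu_{A|_{B_{j}}}$ assembles the tensors with fixed $j$ into the ordinal sum on the right-hand side of the identity above. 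To verify the identity itself, I would check the two types of pairs in the ordinal sum: pairs $(x,y)$ with $x,y \in B_{j} \cap A_{r}$ for a common $r$, which on both sides come from $\pi_{r}|_{B_{j} \cap A_{r}}$, and pairs $(x,y)$ with $x \in B_{j} \cap A_{r}$ and $y \in B_{j} \cap A_{r'}$ for $r < r'$, which on both sides are forced to lie in the relation by the $I_{1} \times I_{2}$-piece of the ordinal sum definition.

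With that identity in hand, the compatibility diagram reduces to bookkeeping about how the parts of $A \titsprod B$ and $B \titsprod A$ are indexed, which the definition $A \titsprod B = B|_{A_{1}} \cdot B|_{A_{2}} \cdots B|_{A_{\ell(A)}}$ encodes precisely so as to match the double-indexed tensor above. The proposition then follows.
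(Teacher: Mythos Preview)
The paper does not actually prove this proposition; it is quoted from~\cite{An} as a known result and stated without proof. Your direct verification of the four axiom families is therefore more than the paper provides, and it is correct. The associativity and coassociativity reductions are immediate from the definitions as you say, naturality is routine, and your key compatibility identity
\[
(\pi_{1} \ordinalsum \cdots \ordinalsum \pi_{\ell(A)}) \big|_{B_{j}} \;=\; \pi_{1}|_{B_{j} \cap A_{1}} \;\ordinalsum\; \cdots \;\ordinalsum\; \pi_{\ell(A)}|_{B_{j} \cap A_{\ell(A)}}
\]
is exactly right: expanding both sides yields $\bigsqcup_{r} \pi_{r}|_{B_{j}\cap A_{r}} \sqcup \bigsqcup_{r<s} (B_{j}\cap A_{r})\times(B_{j}\cap A_{s})$. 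One minor point worth making explicit is that some factors $B_{j}\cap A_{r}$ may be empty, so the right-hand side as written may have more summands than the set composition $A|_{B_{j}}$ has parts; this is harmless because the empty partial order is a two-sided unit for $\ordinalsum$, but a reader following your bookkeeping against the $\sharp$-convention in the definition of $A|_{B_{j}}$ would appreciate a sentence acknowledging it.
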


Another Hopf monoid can be defined using subspaces of $\mathbf{po}$ (technically speaking, this is a \emph{sub-Hopf monoid}).    A \emph{total order} of the finite set $I$ is a partial order $\tau$ of $I$ which is not properly contained in any other partial order of $I$; equivalently these are the partial orders with $\binom{|I|+1}{2}$ relations.  Let
\[
\TTT[I] = \{\text{total orders of $I$}\}.
\]
I will write elements of $\TTT[I]$ as their maximal chain, so that 
\[
a_{1} < a_{2} < \cdots < a_{|I|}
\qquad\text{refers to}\qquad
\{(a_{i}, a_{j}) \;|\; 1 \le i \le j \le |I|\} \in \TTT[I].
\]  
Define a species $\mathbf{t}$ by
\[
\mathbf{t}[I] = \CC\spanning(\TTT[I])
\qquad\text{and}\qquad
\mathbf{t}[\sigma] = \mathbf{po}[\sigma]\big|_{\mathbf{t}[I]}
\]
for each finite set $I$ and bijection $\sigma: I \shortto J$.

\begin{prop}[{\cite[Example 8.16]{AgMahlong}}]
With the same product and coproduct maps as $\mathbf{po}$, the species $\mathbf{t}$ is a Hopf monoid.
\end{prop}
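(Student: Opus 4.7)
The plan is to exhibit $\mathbf{t}$ as a sub-Hopf monoid of $\mathbf{po}$, so that the full list of Hopf monoid axioms (associativity, coassociativity, compatibility, and naturality) is inherited from Proposition~\ref{prop:posetmonoid}. With that strategy, the entire argument reduces to verifying closure: for any set composition $A = (A_{1}, \ldots, A_{\ell})$ of a finite set $I$, the map $\mu_{A}$ must carry $\mathbf{t}(A) \subseteq \mathbf{po}(A)$ into $\mathbf{t}[I] \subseteq \mathbf{po}[I]$, and $\Delta_{A}$ must carry $\mathbf{t}[I]$ into $\mathbf{t}(A)$.

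For closure under the coproduct, I would observe that the restriction $\tau|_{A_{r}}$ of a total order $\tau \in \TTT[I]$ to a subset $A_{r} \subseteq I$ remains totally ordered: any two elements of $A_{r}$ were already comparable in $\tau$, and this comparability is preserved by intersecting with $A_{r} \times A_{r}$. For closure under the product, I would argue that the ordinal sum $\tau_{1} \ordinalsum \cdots \ordinalsum \tau_{\ell}$ of total orders $\tau_{i} \in \TTT[A_{i}]$ is again a total order: two elements in the same block $A_{r}$ are comparable via $\tau_{r}$, while elements in distinct blocks $A_{r}, A_{s}$ with $r < s$ are rendered comparable by the added relations in $A_{r} \times A_{s}$. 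Closure under the action of bijections is immediate, since relabeling a total order yields a total order, so $\mathbf{t}[\sigma]$ is well defined.

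Once closure is established, every Hopf monoid diagram from Section~\ref{sec:hopfmonoiddiagrams} for $\mathbf{t}$ commutes by restriction of the corresponding diagram for $\mathbf{po}$ to the subspaces $\mathbf{t}[I] \subseteq \mathbf{po}[I]$, and the proof concludes. I do not anticipate any substantive obstacle: the argument is a routine inheritance from Proposition~\ref{prop:posetmonoid}, and the only content is the observation that restriction and ordinal sum preserve totality of an order.
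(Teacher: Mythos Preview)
Your argument is correct and is the natural way to establish the result: verify that total orders are closed under restriction, ordinal sum, and relabeling, so that $\mathbf{t}$ is a sub-Hopf monoid of $\mathbf{po}$ and inherits all the axioms from Proposition~\ref{prop:posetmonoid}. The paper itself does not supply a proof; it simply cites \cite[Example 8.16]{AgMahlong}, so there is nothing further to compare against.
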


\begin{ex}
Let $I_{1} = \{a, b, d\}$, $I_{2} = \{c\}$.  Then
\[
\big( a < b < c < d \big)|_{I_{1}} = a < b < d
\qquad \text{and}\qquad
\big( a < b < d\big)
\ordinalsum
\big(c \big)
=
a < b < d < c.
\]
\end{ex}

\section{Linear algebraic groups and class functions}
\label{sec:repprelims}

The results in this paper concern certain linear algebraic groups, their class functions, and maps between spaces of class functions; each of these objects are defined in this section.
The finite general linear groups and some important unipotent subgroups are defined in Section~\ref{sec:matrixgroups}, after which Section~\ref{sec:grouphomomorphisms} gives a number of homomorphisms between these groups.  
Finally, Section~\ref{sec:classfunctions} introduces class functions and describes the functoriality of this construction with respect to the aforementioned homomorphisms.

\subsection{Linear algebraic groups}
\label{sec:matrixgroups}

For each finite set $I$, let $\Mat(I)$ denote the set of $I$-indexed matrices over $\FF_{q}$, so that
\[
\Mat(I) = \{(x_{i, j})_{i, j \in I} \;|\; x_{i, j} \in \FF_{q}\}.
\]
When depicting elements $x \in \Mat(I)$, the convention will be to fix a total order of $I$ and to write $x$ as an $|I| \times |I|$ array with row and column indices appearing in this over.

\begin{ex}
Let $I = \{a, b, c, d\}$.  Ordering this set alphabetically, 
\vspace{-0.5em}
\[
\Mat(\{a, b, c, d\}) = 
\begin{tikzpicture}[scale = 0.45, baseline = -0.45*2.2 cm]
\draw (1 - 0.5, 0.65) node {$\scriptstyle a$};
\draw (2 - 0.5, 0.65) node {$\scriptstyle b$};
\draw (3 - 0.5, 0.65) node {$\scriptstyle c$};
\draw (4 - 0.5, 0.65) node {$\scriptstyle d$};
\draw (-0.65, -1 + 0.5) node {$\scriptstyle a$};
\draw (-0.65, -2 + 0.5) node {$\scriptstyle b$};
\draw (-0.65, -3 + 0.5) node {$\scriptstyle c$};
\draw (-0.65, -4 + 0.5) node {$\scriptstyle d$};
\foreach \x in {1, 2, 3, 4}{\foreach \y in {1, 2, 3, 4}{\draw (\x - 0.5, -\y + 0.5) node {$\ast$};}}
\draw (0.2, 0.085) -- (-0.15, 0.085);
\draw[very thick] (-0.1, 0.1) -- (-0.1, -4.1);
\draw (-0.15, -4.085) -- (0.2, -4.085);
\draw (3.8, 0.085) -- (4.15, 0.085);
\draw[very thick] (4.1, 0.1) -- (4.1, -4.1);
\draw (4.15, -4.085) -- (3.8, -4.085);
\end{tikzpicture}.
\]
\end{ex}

The general linear group in $\Mat(I)$ is the multiplicative group of full rank elements
\[
\GL(I) = \{x \in \Mat(I) \;|\; \operatorname{rank}(x) = |I|\},
\]
with identity $1_{I}$, the $I$-indexed identity matrix.  
Say that an element $g \in \GL(I)$ is \emph{unipotent} if $g - 1_{I}$ is a nilpotent matrix.  Each partial order $\pi \in \PPP\OOO[I]$ determines a subgroup of $\GL(I)$ comprising entirely unipotent elements; the \emph{pattern subgroup} indexed by $\pi$ is
\[
\UT(\pi) = \{X \in \GL_{I} \;|\; \text{$(X - 1_{I})_{i, j} \neq 0$ only if $(i, j) \in \tau$}\},
\]
and has order $|\UT(\pi)| = q^{|\pi| - |I|}$.  

\begin{ex}
\label{ex:unipotentgroups}
Let $I = \{a, b, c, d\}$, $\tau = a < b < c < d$, and $\pi =
 \begin{tikzpicture}[scale = 0.65, baseline = 0.65*0.3cm]
\draw (0, 0) node[inner sep = 0.05cm] (a) {$\scriptstyle a$};
\draw (1, 0) node[inner sep = 0.05cm] (b) {$\scriptstyle b$};
\draw (1.5, 0.5) node[inner sep = 0.05cm] (c) {$\scriptstyle c$};
\draw (0.5, 1) node[inner sep = 0.05cm] (d) {$\scriptstyle d$};
\draw (a) -- (d);
\draw (b) -- (d);
\end{tikzpicture}$.  Then 
\vspace{-1em}
\[
\UT(\tau) = \begin{tikzpicture}[scale = 0.45, baseline = -0.45*2.2 cm]
\draw (1 - 0.5, 0.65) node {$\scriptstyle a$};
\draw (2 - 0.5, 0.65) node {$\scriptstyle b$};
\draw (3 - 0.5, 0.65) node {$\scriptstyle c$};
\draw (4 - 0.5, 0.65) node {$\scriptstyle d$};
\draw (-0.65, -1 + 0.5) node {$\scriptstyle a$};
\draw (-0.65, -2 + 0.5) node {$\scriptstyle b$};
\draw (-0.65, -3 + 0.5) node {$\scriptstyle c$};
\draw (-0.65, -4 + 0.5) node {$\scriptstyle d$};
\foreach \x in {1, 2, 3, 4}{\draw (\x - 0.5, -\x + 0.5) node {$1$};}
\foreach \x in {1}{\foreach \y in {2, 3, 4}{\draw (\x - 0.5, -\y + 0.5) node {$0$};}}
\foreach \x in {2}{\foreach \y in {3, 4}{\draw (\x - 0.5, -\y + 0.5) node {$0$};}}
\foreach \x in {3}{\foreach \y in {4}{\draw (\x - 0.5, -\y + 0.5) node {$0$};}}
\foreach \x in {2, 3, 4}{\foreach \y in {1}{\draw (\x - 0.5, -\y + 0.5) node {$\ast$};}}
\foreach \x in {3, 4}{\foreach \y in {2}{\draw (\x - 0.5, -\y + 0.5) node {$\ast$};}}
\foreach \x in {4}{\foreach \y in {3}{\draw (\x - 0.5, -\y + 0.5) node {$\ast$};}}
\draw (0.2, 0.085) -- (-0.15, 0.085);
\draw[very thick] (-0.1, 0.1) -- (-0.1, -4.1);
\draw (-0.15, -4.085) -- (0.2, -4.085);
\draw (3.8, 0.085) -- (4.15, 0.085);
\draw[very thick] (4.1, 0.1) -- (4.1, -4.1);
\draw (4.15, -4.085) -- (3.8, -4.085);
\end{tikzpicture}
\qquad\text{and}\qquad
\UT(\pi) = \begin{tikzpicture}[scale = 0.45, baseline = -0.45*2.2 cm]
\draw (1 - 0.5, 0.65) node {$\scriptstyle a$};
\draw (2 - 0.5, 0.65) node {$\scriptstyle b$};
\draw (3 - 0.5, 0.65) node {$\scriptstyle c$};
\draw (4 - 0.5, 0.65) node {$\scriptstyle d$};
\draw (-0.65, -1 + 0.5) node {$\scriptstyle a$};
\draw (-0.65, -2 + 0.5) node {$\scriptstyle b$};
\draw (-0.65, -3 + 0.5) node {$\scriptstyle c$};
\draw (-0.65, -4 + 0.5) node {$\scriptstyle d$};
\foreach \x in {1, 2, 3, 4}{\draw (\x - 0.5, -\x + 0.5) node {$1$};}
\foreach \x in {1}{\foreach \y in {2, 3, 4}{\draw (\x - 0.5, -\y + 0.5) node {$0$};}}
\foreach \x in {2}{\foreach \y in {3, 4}{\draw (\x - 0.5, -\y + 0.5) node {$0$};}}
\foreach \x in {3}{\foreach \y in {4}{\draw (\x - 0.5, -\y + 0.5) node {$0$};}}
\foreach \x in {2, 3}{\foreach \y in {1}{\draw (\x - 0.5, -\y + 0.5) node {$0$};}}
\foreach \x in {3}{\foreach \y in {2}{\draw (\x - 0.5, -\y + 0.5) node {$0$};}}
\foreach \x in {4}{\foreach \y in {3}{\draw (\x - 0.5, -\y + 0.5) node {$0$};}}
\foreach \x in {4}{\foreach \y in {1}{\draw (\x - 0.5, -\y + 0.5) node {$\ast$};}}
\foreach \x in {4}{\foreach \y in {2}{\draw (\x - 0.5, -\y + 0.5) node {$\ast$};}}
\draw (0.2, 0.085) -- (-0.15, 0.085);
\draw[very thick] (-0.1, 0.1) -- (-0.1, -4.1);
\draw (-0.15, -4.085) -- (0.2, -4.085);
\draw (3.8, 0.085) -- (4.15, 0.085);
\draw[very thick] (4.1, 0.1) -- (4.1, -4.1);
\draw (4.15, -4.085) -- (3.8, -4.085);
\end{tikzpicture}.
\]
\end{ex}

The maximal pattern subgroups are indexed by total orders; for $\tau \in \TTT[I]$, the \emph{unipotent $\tau$-upper triangular group} is $\UT(\tau)$.  When $\tau$ agrees with the row and column ordering, $\UT(\tau)$ is actually the the set of unipotent upper triangular matrices.  
For $\tau \in \TTT[I]$, the unipotent $\tau$-upper triangular group $\UT(\tau)$ contains each pattern subgroup $\UT(\pi)$ for which $\pi \subseteq \tau$, so that the order $\pi$ extends to the total order $\tau$.

\begin{lem}[\cite{Mar} Lemma 4.1]
\label{lem:patterngroupnormalcy}
Let $\tau$ be a total order of a finite set $I$ and $\pi$ a partial order of $I$ which is extended by $\tau$.  Then $\UT(\pi) \trianglelefteq \UT(\tau)$ if and only if 
\begin{equation}
\label{eq:naturalunitintervalorder}
\text{for each $(j, k) \in \pi$ with $j \neq k$}: \qquad \{(i, l) \;|\;\text{$(i, j)$ and $(k, l) \in \tau$}\} \subseteq \pi.
\end{equation}
\end{lem}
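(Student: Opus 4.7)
The plan is to reduce the claim to a commutator check on root-group generators. Recall that $\UT(\tau)$ is generated as a group by the one-parameter subgroups $X_{a,b} = \{1_I + t e_{a,b} : t \in \FF_{q}\}$ indexed by $(a,b) \in \tau$ with $a \neq b$, and $\UT(\pi)$ is generated similarly by the $X_{a,b}$ for $(a,b) \in \pi$ with $a \neq b$. Hence $\UT(\pi) \trianglelefteq \UT(\tau)$ is equivalent to requiring that every generator of $\UT(\tau)$ conjugates every generator of $\UT(\pi)$ back into $\UT(\pi)$.

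Next I would expand such a conjugate directly. Writing $x_{e,f}(r) := 1_I + r\, e_{e,f}$ and using $e_{a,b} e_{c,d} = \delta_{b,c}\, e_{a,d}$ together with $e_{a,b}^{2} = 0$, a short matrix computation yields
\[
x_{a,b}(t)\, x_{c,d}(s)\, x_{a,b}(-t) = 1_I + s\, e_{c,d} + ts\,\delta_{b,c}\, e_{a,d} - ts\,\delta_{a,d}\, e_{c,b},
\]
where the would-be quartic term $-t^{2} s\,\delta_{b,c}\delta_{a,d}\, e_{a,b}$ drops out because the simultaneous coincidences $b = c$ and $a = d$ would force $a <_{\tau} b = c \le_{\tau} d = a$, an impossibility. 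One also checks that the three positions $(c,d), (a,d), (c,b)$ cannot coincide when nonzero (any collision would similarly force $a = b$), so this conjugate lies in $\UT(\pi)$ for every $t, s \in \FF_{q}$ if and only if two ``one-step'' conditions hold: (R) whenever $(a,b) \in \tau$, $(b,d) \in \pi$, and $a \neq d$, then $(a,d) \in \pi$; and (C) whenever $(c,a) \in \pi$, $(a,b) \in \tau$, and $c \neq b$, then $(c,b) \in \pi$.

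Finally, I would establish that the conjunction of (R) and (C) is equivalent to the displayed condition~\eqref{eq:naturalunitintervalorder}. The implication \eqref{eq:naturalunitintervalorder} $\Rightarrow$ (R), (C) follows at once by specializing one of $(i,j)$ or $(k,l)$ in~\eqref{eq:naturalunitintervalorder} to a reflexive pair $(j,j)$ or $(k,k)$. For the converse, take $(j,k) \in \pi$ with $j \neq k$ and $(i,j), (k,l) \in \tau$. Antisymmetry of $\tau$ combined with $(j,k) \in \pi \subseteq \tau$ rules out $i = k$ (unless $i = j$) and $j = l$ (unless $k = l$), so applying (R) to $(a,b,d) = (i,j,k)$ produces $(i,k) \in \pi$, and then applying (C) to $(c,a,b) = (i,k,l)$ produces $(i,l) \in \pi$; the reflexive boundary cases are absorbed directly into the hypothesis $(j,k) \in \pi$ via reflexivity.

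The main obstacle I anticipate is the careful bookkeeping of the degenerate index coincidences in the final step (tracking when an index equality forces the conclusion by reflexivity versus when it invokes (R) or (C)) along with the confirmation that upper triangularity truly eliminates the quartic commutator term. Once those verifications are pinned down, the equivalence follows mechanically from the commutator identity.
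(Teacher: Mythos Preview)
The paper does not supply its own proof of this lemma; it is quoted from~\cite{Mar} without argument. Your approach via the elementary commutator identity on root subgroups is correct and is the standard way to establish such a result: the reduction to generators is valid because the one-parameter subgroups $X_{a,b}$ are closed under inversion and conjugation is a homomorphism, your expansion of $x_{a,b}(t)\,x_{c,d}(s)\,x_{a,b}(-t)$ is accurate (the quartic term genuinely vanishes by antisymmetry of $\tau$), and the equivalence of the two one-step closure conditions (R) and (C) with~\eqref{eq:naturalunitintervalorder} is exactly as you describe. The degenerate-index bookkeeping you flag is routine: every coincidence either contradicts $j\neq k$ via antisymmetry of $\tau$ or reduces the desired membership $(i,l)\in\pi$ to reflexivity or to the hypothesis $(j,k)\in\pi$ itself.
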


Under the assumption that $\tau$ is the canonical order on a set of the form $\{1, \ldots, n\}$, the meaning of~\eqref{eq:naturalunitintervalorder} is considered in greater depth in Section~\ref{sec:scf}.  

\subsection{Homomorphisms}
\label{sec:grouphomomorphisms}

First, take $I$ to be a finite set.  Each bijection $\sigma: I \shortto J$ induces an isomorphism,
\[
\begin{array}{rccc}
\sigma\colon & \GL(I) & \to & \GL(J) \\
& X & \mapsto & {}^{\sigma}X
\end{array}
\qquad\text{where ${}^{\sigma}X = (X_{\sigma(i), \sigma(j)})_{i, j \in I}$}.
\]
For each partial order $\pi \in \PPP\OOO[I]$, this map restricts to an isomorphism 
\[
\sigma\colon \UT(\pi) \cong \UT({}^{\sigma}\pi).
\]

\begin{rem}
Taking $J = I$ above, there is a natural identification of $\sigma$ with a permutation matrix in $\GL(I)$.  In this context, the isomorphisms $\sigma: \GL(I) \shortto \GL(I)$ is equivalent to the inner automorphism of $\GL(I)$ given by conjugation by $\sigma$.
\end{rem}

Now consider disjoint finite sets $I_{1}, \ldots, I_{\ell}$.  There is an injection
\[
\begin{array}{ccc}
\Mat(I_{1}) \times \cdots \times \Mat(I_{\ell}) & \to & \Mat(I_{1}  \sqcup \cdots \sqcup I_{\ell}) \\[0.25em]
\big(X^{(1)}, \ldots, X^{(\ell)}\big) & \mapsto & X^{(1)} \oplus \cdots \oplus X^{(\ell)},
\end{array}
\]
where
\[
\big(X^{(1)} \oplus \cdots \oplus X^{(\ell)}\big)_{i, j} = \begin{cases}  X^{(r)}_{i, j} & \text{if $i, j \in I_{r}$, $1 \le r \le \ell$,} \\ 0 & \text{otherwise.} \end{cases}
\]
This map is additive with respect to rank and preserves matrix multiplication, so
\[
\GL(I_{1})  \times \cdots \times \GL(I_{\ell}) \cong \GL(I_{1}) \oplus \cdots \oplus \GL(I_{\ell}) \subseteq \GL\big(I_{1} \sqcup I_{2} \sqcup \cdots \sqcup I_{\ell}\big).
\]

The relations above also restrict to pattern subgroups.  For partial orders $\pi_{1} \in \PPP\OOO[I_{1}], \ldots$, $\pi_{\ell} \in \PPP\OOO[I_{\ell}]$,
\begin{equation}
\label{eq:dirsumiso}
\UT(\pi_{1}) \times \cdots \times \UT(\pi_{\ell}) \cong \UT(\pi_{1}) \oplus \cdots \oplus \UT(\pi_{\ell}), 
\end{equation}
which is also equal to $\UT(\pi_{1} \sqcup \cdots \sqcup \pi_{\ell})$.  As such the direct sum $\UT(\pi_{1}) \oplus \cdots \oplus \UT(\pi_{\ell})$ is contained in $\UT(\phi)$ for any $\phi \in \PPP\OOO[I_{1} \sqcup \cdots \sqcup I_{\ell}]$ with $\pi_{i} \subseteq \phi|_{I_{r}}$ for each $1 \le r \le \ell$.  The ordinal sum $\phi = \pi_{1} \ordinalsum \cdots \ordinalsum \pi_{\ell}$ satisfies this condition, so
\[
\UT(\pi_{1}) \oplus \cdots \oplus \UT(\pi_{\ell}) \subseteq \UT(\pi_{1} \ordinalsum \cdots \ordinalsum \pi_{\ell}).
\]

\subsection{Class functions and representation theoretic maps}
\label{sec:classfunctions}

For a finite group $G$, the \emph{space of class functions} of $G$ is
\[
\cf(G) = \CC\spanning\{ \psi: G \shortto \CC \;|\; \text{$\psi(g) = \psi(hgh^{-1})$ for all $g, h \in G$}\},
\]
equipped with the standard inner product $\langle \cdot, \cdot \rangle: \cf(G) \otimes \cf(G) \shortto \CC$.

Each homomorphism of groups $f: K \shortto G$ induces a $\CC$-linear map
\[
\begin{array}{rccc}
f^{\ast}\colon & \cf(G) &\to& \cf(K) \\
& \psi & \mapsto & \psi \circ f.
\end{array}
\]
As an example, take $f: \UT(\pi) \times \UT(\rho) \shortto \UT(\pi) \oplus \UT(\rho)$ to be the isomorphism in Equation~\eqref{eq:dirsumiso}; then $f^{\ast}$ gives an isomorphism
\[
\cf\big(\UT(\pi) \oplus \UT(\rho)\big) \cong \cf(\UT(\pi)) \otimes \cf(\UT(\rho)),
\]
where $\cf(\UT(\pi)) \otimes \cf(\UT(\rho))$ and $\cf(\UT(\pi) \times \UT(\rho))$ are canonically identified in accordance with standard practice.

Two more special cases are extremely important in subsequent sections.  If $K \subseteq G$ and $f\colon K \shortto G$ is the inclusion map, then $f^{\ast}$ is the \emph{restriction} map
\[
\res^{G}_{K} \colon \cf(G) \to \cf(K).
\]
With respect to $\langle \cdot, \cdot \rangle$, the adjoint of $\res^{K}_{G}$ is \emph{induction},
\[
\begin{array}{rccc}
\ind^{G}_{K} \colon & \cf(K) & \to & \cf(G) \\
& \psi & \mapsto & \Bigg( g \shortmapsto \displaystyle\frac{1}{|G|} \sum_{\substack{x \in G \\ xgx^{-1} \in K }} \psi(xgx^{-1}) \Bigg).
\end{array}
\]

If $G = K/H$ and $f \colon K \shortto G$ is the canonical projection map, then $f^{\ast}$ is \emph{inflation}
\[
\infl^{K}_{G}\colon \cf(G) \to \cf(K),
\]
and the adjoint of $\infl^{K}_{G}$ is \emph{deflation},
\[
\begin{array}{rccc}
\defl^{K}_{G} \colon & \cf(K) & \to & \cf(G) \\
& \psi & \mapsto & \bigg( gH \shortmapsto \displaystyle\frac{1}{|H|} \sum_{h \in H} \psi(gh) \bigg)
\end{array}.
\]

\section{The Hopf monoid $\cfUT$}
\label{sec:cfmonoid}

This section will define a Hopf monoid $\cfUT$ which lies above the titular Hopf algebra $\cf(\UT_{\bullet})$.  
The underlying structure maps are easier to work with in the context of Hopf monoids, so this significantly simplifies many proofs in later sections.   
However, the details of $\cfUT$ are still quite complicated, so this section will begin with an overview of the construction. 

Recall the content of Sections~\ref{sec:Hopfprelims} and~\ref{sec:repprelims}.  The underlying vector species for $\cfUT$ was first constructed in~\cite{AgBerTh}, and has
\[
\cfUT[I] = \bigoplus_{\tau \in \TTT[I]} \cf(\UT(\tau)) \qquad \text{and} \qquad \cfUT[\sigma] = \bigoplus_{\tau \in \TTT[I]} \bigg( \cf(\UT(\tau)) \xrightarrow{\: (\sigma^{-1})^{\ast} \;} \cf(\UT({}^{\sigma}\tau)) \bigg)
\]
for each finite set $I$ and bijection $\sigma: I \shortto J$, where $\TTT[I]$ denotes the set of total orders as in Section~\ref{sec:monoidexamples}.
For any finite set $I$ and set composition $A = (A_{1}, \ldots, A_{\ell})$ of $I$, Equation~\ref{eq:dirsumiso} shows that the space $\cfUT(A) = \cfUT[A_{1}] \otimes \cdots \otimes \cfUT[A_{\ell}]$ can be canonically identified as
\begin{equation}
\label{eq:cfA}
\cfUT(A) = \bigoplus_{\substack{\tau_{i} \in \TTT[A_{i}] \\ 1 \le i \le \ell }} \cf\big(\UT(\tau_{1}) \oplus \cdots \oplus \UT(\tau_{\ell})\big).
\end{equation}
This identification makes it possible to construct functorial maps between $\cfUT[I]$ and $\cfUT(A)$ as follows.  Each $\UT(\tau_{1}) \oplus \cdots \oplus \UT(\tau_{\ell})$ is a subgroup of $\UT(\phi)$ for each total order $\phi \in \TTT[I]$ with $\phi|_{A_{i}} = \tau_{i}$, and Section~\ref{sec:CFalgprelims} shows that this subgroup has a normal complement in a larger ``parabolic'' subgroup $\UP(\phi, A)$ of $\UT(\phi)$.  Then, Section~\ref{sec:downmaps} defines a coproduct for $\cfUT$ by
\[
\Delta_{A} = \bigoplus_{\phi \in \TTT[I]} \bigg( \cf(\UT(\phi)) \xrightarrow{\;\defl \circ \res \;} \cf\big(\UT(\phi|_{A_{1}}) \oplus \cdots \oplus \UT(\phi|_{A_{\ell}})\big) \bigg),
\]
with the deflation--restriction composition factoring through the parabolic subgroup.  Following this, Section~\ref{sec:upmaps} considers the special case in which $\phi = \tau_{1} \ordinalsum \cdots \ordinalsum \tau_{\ell}$ and shows that here the parabolic subgroup is the full unipotent upper triangular group, $\UP(\phi, A) = \UT(\sigma)$.  The product for $\cfUT$ is defined by
\[
\mu_{A} = \bigoplus_{\substack{\tau_{i} \in \TTT[A_{i}] \\ 1 \le i \le \ell}} \bigg( \cf\big(\UT(\tau_{1}) \oplus \cdots \oplus \UT(\tau_{\ell})\big) \xrightarrow{\;\infl\;} \cf(\UT(\tau_{1} \ordinalsum \cdots \ordinalsum \tau_{\ell})) \bigg),
\]
which in this case is the adjoint of the deflation--restriction map used in the coproduct.

\begin{thm}
\label{thm:CFhopfmonoid}
With the maps $\mu$ and $\Delta$ as above, $\cfUT$ is a connected Hopf monoid.
\end{thm}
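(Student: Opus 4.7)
The plan is to verify in turn the four axioms from Section~\ref{sec:hopfmonoiddiagrams}: connectedness, naturality, (co)associativity, and compatibility. Connectedness is immediate: $\TTT[\emptyset]$ contains only the empty relation, so $\cfUT[\emptyset] = \cf(\UT(\emptyset)) = \CC$. Naturality is essentially formal: a bijection $\sigma: I \shortto J$ induces isomorphisms identifying every subgroup, quotient, and class function appearing in the definitions of $\mu_A$ and $\Delta_A$, so the pullback operations used to define these maps transport through $\sigma$ unchanged.

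Associativity and coassociativity are dual and can be handled together. The ordinal sum is associative on partial orders, so for $B$ refining $A$ the iterated sum across $B$ can be regrouped according to $A$, producing a tower of pattern subgroups, each normal in the next. Associativity then reduces to transitivity of inflation along this tower. Coassociativity is analogous: $(\phi|_{A_i})|_{B_j} = \phi|_{A_i \cap B_j}$, and both restriction and deflation compose transitively along the corresponding tower of parabolic supergroups.

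The main obstacle is compatibility. Fix set compositions $A, B \vDash I$ and a pure tensor in $\cfUT(A)$ supported on a choice of $\tau_i \in \TTT[A_i]$; write $\phi = \tau_1 \ordinalsum \cdots \ordinalsum \tau_{\ell(A)}$. The top path $\Delta_B \circ \mu_A$ first inflates into $\cf(\UT(\phi))$, then deflates-restricts into $\bigoplus_j \cf(\UT(\phi|_{B_j}))$ via the parabolic $\UP(\phi, B)$. The bottom path first applies $\Delta_{B|_{A_i}}$ to each tensor factor, rearranges the resulting tensors via the canonical isomorphism $\cfUT(A \titsprod B) \cong \cfUT(B \titsprod A)$, and then applies $\mu_{A|_{B_j}}$ factor-wise. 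I would show both paths equal a common composition that factors through the pattern subgroup $\bigoplus_{i,j} \UT(\phi|_{A_i \cap B_j})$ whose summands are indexed by the parts of $A \titsprod B = B \titsprod A$ (viewed as sets, with differing orderings). This common subgroup sits as a pattern subgroup inside both relevant intermediate groups, and the parabolic structure nests, with $\UP(\phi, A \titsprod B)$ contained in each of $\UP(\phi, A)$ and $\UP(\phi, B)$.

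Reducing to the common refinement is a Mackey-type identity for inflation and deflation-restriction through this compatibly nested pair of parabolic subgroups. I expect it to follow by repeatedly applying Lemma~\ref{lem:patterngroupnormalcy} to the orders arising from $A$, $B$, and $A \titsprod B$, together with the explicit normal complement structure to be established in Section~\ref{sec:CFalgprelims}. The most delicate step will be checking that the tensor-factor rearrangement exactly matches the set-level identification of $A \titsprod B$ with $B \titsprod A$ and that the pattern subgroups arising on either side intersect as predicted; once that bookkeeping is in place, the axiom falls out of the Mackey identity.
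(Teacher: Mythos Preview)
Your overall plan matches the paper's: reduce to individual summands indexed by total orders and verify each axiom there. Connectedness and naturality are as you say, and your associativity argument via the tower of quotients $\UT(\phi) \twoheadrightarrow \UL(\phi, A) \twoheadrightarrow \UL(\phi, B)$ is correct; the paper instead obtains associativity by adjointness from coassociativity (Proposition~\ref{prop:upwardtransitivity}), but either route works. For compatibility, you correctly single out $\UL(\phi, A \titsprod B) = \bigoplus_{i,j} \UT(\phi|_{A_i \cap B_j})$ as the common intermediate space; the paper's argument (Proposition~\ref{prop:compatibility}) is a direct computation using the semidirect product factorization $\UT(\phi) = \UL(\phi, A) \ltimes \UR(\phi, A)$ rather than a genuine Mackey double-coset sum, but your plan would arrive at the same identity.

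The one real gap is in your coassociativity sketch. Writing $\resf = \defl \circ \res$, what must be shown is transitivity of $\resf$ itself, and this does \emph{not} follow from restriction and deflation each being transitive: the two-step composite has the shape $\defl \circ \res \circ \defl \circ \res$, and collapsing it requires commuting the middle restriction past the first deflation. The paper's proof (Proposition~\ref{prop:downtransitivity}) does this via the decomposition
\[
\UR(\tau, B) \;=\; \Big(\bigoplus_{i} \UR(\tau|_{A_i}, B|_{A_i})\Big) \rtimes \UR(\tau, A),
\]
which says that the radical at level $B$ factors as the radical at level $A$ times the ``relative'' radicals sitting inside the Levi $\UL(\tau, A)$. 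This identity (a consequence of Proposition~\ref{prop:refinementsets}) is exactly what makes your ``tower of parabolic supergroups'' close up, and it is also what underlies your associativity tower; it deserves explicit mention rather than being folded into a transitivity slogan.
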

\begin{proof}
Taking the Hopf monoid $\mathbf{t}$ defined in Section~\ref{sec:monoidexamples} into consideration, it is sufficient to show that each Hopf monoid axiom from Section~\ref{sec:hopfmonoiddiagrams} holds the for individual summands in $\mu_{A}$ and $\Delta_{A}$.  Coassociativity and associativity are established in this manner by Propositions~\ref{prop:downtransitivity} and~\ref{prop:upwardtransitivity}, respectively.  In the same way, compatibility is established by Proposition~\ref{prop:compatibility}, and naturality by Proposition~\ref{prop:naturality}.
\end{proof}

This section will fill in the remaining details in the story above.  
Sections~\ref{sec:CFcombprelims} and~\ref{sec:CFalgprelims} set up the machinery necessary for later sections.  
Then, Section~\ref{sec:downmaps} defines the maps used in the coproduct and establishes coassociativity, and Section~\ref{sec:upmaps} defines the map used in the product and establishes associativity. Finally, Section~\ref{sec:compatibility} establishes the remaining axioms.  

\paragraph{Convention:} Outside of numbered results and examples, $I$ and $\tau \in \TTT[I]$ refer to an arbitrary fixed finite set and total order.

\subsection{Combinatorial underpinnings}
\label{sec:CFcombprelims}

The maps used in the product and coproduct of $\cfUT$ involve certain pattern groups constructed from set compositions.  This section gives the combinatorics of this construction.

Each set composition $A \vDash I$ partitions the set $I \times I$ into three disjoint subsets:
\begin{align*}
\textbf{$A$-ascents:} &\qquad \ASC(A) = \bigsqcup_{1 \le r < s \le \ell(A)} A_{r} \times A_{s}, \\[0.75em]
\textbf{$A$-equalities:} &\qquad \EQ(A) = \bigsqcup_{1 \le r \le \ell(A)} A_{r} \times A_{r},\;\text{and}\\[0.75em]
\textbf{$A$-inversions:} &\qquad \INV(A) = \bigsqcup_{1 \le r < s \le \ell(A)} A_{s} \times A_{r}.
\end{align*}

Each of there sets can be visualized as a collection of cells in an $|I| \times |I|$ array with row and column labels in $I$, ordered by $\tau$: $\ASC(A)$ is the intersection of $A_{r}$-labelled rows with $A_{s}$-labelled columns for $r < s$, and $\EQ(A)$ is the intersection of the rows and columns with labels that both belong to the same part of $A$, and $\INV(A)$ are the remaining cells.  

\begin{ex}
\label{ex:asceqinv}
Let $I = \{a, b, c, d\}$ and $\tau = a < b < c < d$.  Two examples of set compositions of $I$ and their corresponding ascent, equality, and inversion sets are shown below:
\[
\begin{tikzpicture}[scale = 0.45, baseline = 0.45*-2.2cm]
\node at (2, 2) {$({\color{gray} \{ a, b, c \}}, {\color{gray!150}\{d\}} )$};
\draw (1 - 0.5, 0.65) node {$\scriptstyle a$};
\draw (2 - 0.5, 0.65) node {$\scriptstyle b$};
\draw (3 - 0.5, 0.65) node {$\scriptstyle c$};
\draw (4 - 0.5, 0.65) node {$\scriptstyle d$};
\draw (-0.65, -1 + 0.5) node {$\scriptstyle a$};
\draw (-0.65, -2 + 0.5) node {$\scriptstyle b$};
\draw (-0.65, -3 + 0.5) node {$\scriptstyle c$};
\draw (-0.65, -4 + 0.5) node {$\scriptstyle d$};
\foreach \x in {1, 2, 3, 4}{\foreach \y in {1, 2, 3}{\path[fill = gray!25] (\x - 1, -\y + 1) -- (\x, -\y + 1) -- (\x, -\y) -- (\x - 1, -\y) -- cycle;}}
\foreach \x in {4}{\foreach \y in {4}{\path[fill = gray!80] (\x - 1, -\y + 1) -- (\x, -\y + 1) -- (\x, -\y) -- (\x - 1, -\y) -- cycle;}}
\foreach \x in {4}{\foreach \y in {1, 2, 3}{\path[pattern = north east lines, pattern color = gray] (\x - 1, -\y + 1) -- (\x, -\y + 1) -- (\x, -\y) -- (\x - 1, -\y) -- cycle;}}
\draw (0, 0) grid (4, -4);
\end{tikzpicture}
\qquad\text{and}\qquad
\begin{tikzpicture}[scale = 0.45, baseline = 0.45*-2.2cm]
\node at (2, 2) {$({\color{gray} \{ a, b, d \}}, {\color{gray!150}\{c\}} )$};
\draw (1 - 0.5, 0.65) node {$\scriptstyle a$};
\draw (2 - 0.5, 0.65) node {$\scriptstyle b$};
\draw (3 - 0.5, 0.65) node {$\scriptstyle c$};
\draw (4 - 0.5, 0.65) node {$\scriptstyle d$};
\draw (-0.65, -1 + 0.5) node {$\scriptstyle a$};
\draw (-0.65, -2 + 0.5) node {$\scriptstyle b$};
\draw (-0.65, -3 + 0.5) node {$\scriptstyle c$};
\draw (-0.65, -4 + 0.5) node {$\scriptstyle d$};
\foreach \x in {1, 2, 3, 4}{\foreach \y in {1, 2, 4}{\path[fill = gray!25] (\x - 1, -\y + 1) -- (\x, -\y + 1) -- (\x, -\y) -- (\x - 1, -\y) -- cycle;}}
\foreach \x in {3}{\foreach \y in {3}{\path[fill = gray!80] (\x - 1, -\y + 1) -- (\x, -\y + 1) -- (\x, -\y) -- (\x - 1, -\y) -- cycle;}}
\foreach \x in {3}{\foreach \y in {1, 2, 4}{\path[pattern = north east lines, pattern color = gray] (\x - 1, -\y + 1) -- (\x, -\y + 1) -- (\x, -\y) -- (\x - 1, -\y) -- cycle;}}
\draw (0, 0) grid (4, -4);
\end{tikzpicture}
\qquad\text{with}\qquad
\begin{tikzpicture}[scale = 0.45, baseline = -0.3cm]
\begin{scope}[yshift = 1.75cm]
\draw[fill = gray!25] (0, 0) -- (1, 0) -- (1, -1) -- (0, -1) -- cycle;
\draw[pattern = north east lines, pattern color = gray] (0, 0) -- (1, 0) -- (1, -1) -- (0, -1) -- cycle;
\draw (2.55, -0.6) node {$\in \ASC$,};
\end{scope}
\draw[fill = gray!80] (0, 0) -- (1, 0) -- (1, -1) -- (0, -1) -- cycle;
\draw[fill = gray!25] (-1.2, 0) -- (-0.2, 0) -- (-0.2, -1) -- (-1.2, -1) -- cycle;
\draw (3.2, -0.5) node {$\in \EQ$, and};
\begin{scope}[yshift = -1.75cm]
\draw[fill = white] (0, 0) -- (1, 0) -- (1, -1) -- (0, -1) -- cycle;
\draw (2.5, -0.5) node {$\in \INV$.};
\end{scope}
\end{tikzpicture}
\]
\end{ex}

Recall from Section~\ref{sec:compositions} that the compositions of a set $I$ are ordered by refinement.  The construction above gives a bijection
\[
\begin{array}{rcl}
\{\text{maximally refined set compositions}\} & \longleftrightarrow & \TTT[I] \\[0.5em]
A & \mapsto & \EQ(A) \cup \ASC(A).
\end{array}
\]
For a set composition which is not maximally refined, the construction above produces a preorder, which does not satisfy the antisymmetry condition required to be a partial order.

\begin{prop}
\label{prop:refinementsets}
Let $A = (A_{1}, \ldots, A_{\ell})$ and $B$ be set compositions of a finite set $I$.  Then
\[
\EQ(A \titsprod B) = \EQ(A) \cap \EQ(B)\qquad\text{and}\qquad \ASC(A \titsprod B) = \ASC(A) \sqcup \bigsqcup_{i = 1}^{\ell(A)} \ASC(B|_{A_{i}}) 
\]
\end{prop}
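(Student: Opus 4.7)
The plan is to argue directly from the definitions, unpacking what it means for a pair $(x, y) \in I \times I$ to lie in $\EQ$ or $\ASC$ of the Tits product. Recall that by definition
\[
A \titsprod B = B|_{A_1} \cdot B|_{A_2} \cdots B|_{A_\ell},
\]
so the parts of $A \titsprod B$ are precisely the nonempty intersections $A_i \cap B_j$, listed in lexicographic order on the pair $(i, j)$: the part $A_i \cap B_j$ precedes $A_{i'} \cap B_{j'}$ iff $i < i'$, or $i = i'$ and $j < j'$.

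For the equality statement, I would observe that $(x, y) \in \EQ(A \titsprod B)$ iff $x$ and $y$ lie in a common part $A_i \cap B_j$ of $A \titsprod B$, which happens iff they share a part of $A$ (so $(x, y) \in \EQ(A)$) and a part of $B$ (so $(x, y) \in \EQ(B)$). This gives the first identity immediately.

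For the ascent statement, I would split by comparing the lex position of the parts containing $x$ and $y$. If $x \in A_i \cap B_j$ and $y \in A_{i'} \cap B_{j'}$ with $(i, j) < (i', j')$, then either $i < i'$ — in which case $(x, y)$ is already an $A$-ascent — or $i = i'$ and $j < j'$, in which case $x, y \in A_i$ and $(x, y)$ is an ascent of the restriction $B|_{A_i}$. Conversely, every element of $\ASC(A)$ arises from the first case, and every element of $\ASC(B|_{A_i})$ from the second (for the appropriate $i$). Disjointness of the right-hand side is automatic: the summands $\ASC(B|_{A_i})$ are mutually disjoint since they involve different parts of $A$, and each is disjoint from $\ASC(A)$ because its pairs lie in a single part of $A$ rather than across two distinct ones.

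There is essentially no obstacle — the entire content is bookkeeping with the lexicographic description of the parts of $A \titsprod B$. The only thing to be a little careful about is to verify that the listed order of parts of $A \titsprod B$ is indeed lex on $(i, j)$, which is immediate from the formula $A \titsprod B = B|_{A_1} \cdots B|_{A_\ell}$ together with the fact that $B|_{A_i}$ inherits the order of $B$ on the elements of $A_i$.
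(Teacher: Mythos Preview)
Your proposal is correct and follows essentially the same approach as the paper's proof: both argue directly from the definition $A \titsprod B = B|_{A_1} \cdots B|_{A_\ell}$ and split the ascent case according to whether $x$ and $y$ lie in the same part of $A$ or not. Your version is more explicit---spelling out the lexicographic order on parts, giving both containments, and verifying disjointness---whereas the paper's proof is a terse sketch of only the forward containment for $\ASC$, but the underlying idea is identical.
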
 
\begin{proof}[Proof of Proposition~\ref{prop:refinementsets}]
Recall that $A \titsprod B = B|_{A_{1}}  \cdots B|_{A_{\ell(A)}}$.  The first statement follows from the definition of $\EQ$.  For the second, take $(i, j) \in \ASC(A \titsprod B)$; either $(i, j) \in A_{r} \times A_{s}$ for some $r < s$, or $i, j \in A_{r}$ for some $r$.  If the latter holds, $(i, j) \in B_{s} \times B_{t}$ for some $s < t$.
\end{proof}

This result can also be seen in the array-based construction described above.  

\begin{ex}
Taking $A \vDash \{a, b, c, d\}$ to be one of the set compositions in Example~\ref{ex:asceqinv} and $B = (\{b, c, d\}, \{a\})$, $A \titsprod B$ and the corresponding ascent, equality, and inversion sets are
\[
\begin{tikzpicture}[scale = 0.45, baseline = 0.45*-2.2cm]
\node at (2, 2) {$({\color{green!30!gray} \{ b, c\}},{\color{orange!50!gray} \{a \}}, {\color{gray!150}\{d\}} )$};
\draw (1 - 0.5, 0.65) node {$\scriptstyle a$};
\draw (2 - 0.5, 0.65) node {$\scriptstyle b$};
\draw (3 - 0.5, 0.65) node {$\scriptstyle c$};
\draw (4 - 0.5, 0.65) node {$\scriptstyle d$};
\draw (-0.65, -1 + 0.5) node {$\scriptstyle a$};
\draw (-0.65, -2 + 0.5) node {$\scriptstyle b$};
\draw (-0.65, -3 + 0.5) node {$\scriptstyle c$};
\draw (-0.65, -4 + 0.5) node {$\scriptstyle d$};
\foreach \x in {1, 2, 3, 4}{\foreach \y in {1, 2, 3}{\path[fill = gray!25] (\x - 1, -\y + 1) -- (\x, -\y + 1) -- (\x, -\y) -- (\x - 1, -\y) -- cycle;}}
\foreach \x in {1, 2, 3}{\foreach \y in {2, 3}{\path[fill = green!30!gray!50] (\x - 1, -\y + 1) -- (\x, -\y + 1) -- (\x, -\y) -- (\x - 1, -\y) -- cycle;}}
\foreach \x in {1}{\foreach \y in {1}{\path[fill = orange!50!gray!75] (\x - 1, -\y + 1) -- (\x, -\y + 1) -- (\x, -\y) -- (\x - 1, -\y) -- cycle;}}
\foreach \x in {1}{\foreach \y in {2, 3}{\path[pattern = north east lines, pattern color = orange!100!gray] (\x - 1, -\y + 1) -- (\x, -\y + 1) -- (\x, -\y) -- (\x - 1, -\y) -- cycle;}}
\foreach \x in {4}{\foreach \y in {4}{\path[fill = gray!80] (\x - 1, -\y + 1) -- (\x, -\y + 1) -- (\x, -\y) -- (\x - 1, -\y) -- cycle;}}
\foreach \x in {4}{\foreach \y in {1, 2, 3}{\path[pattern = north east lines, pattern color = gray] (\x - 1, -\y + 1) -- (\x, -\y + 1) -- (\x, -\y) -- (\x - 1, -\y) -- cycle;}}
\draw (0, 0) grid (4, -4);
\end{tikzpicture}
\qquad\text{and}\qquad
\begin{tikzpicture}[scale = 0.45, baseline = 0.45*-2.2cm]
\node at (2, 2) {$({\color{green!30!gray} \{ b, d\}},{\color{orange!50!gray} \{a\}}, {\color{gray!150}\{c\}} )$};
\draw (1 - 0.5, 0.65) node {$\scriptstyle a$};
\draw (2 - 0.5, 0.65) node {$\scriptstyle b$};
\draw (3 - 0.5, 0.65) node {$\scriptstyle c$};
\draw (4 - 0.5, 0.65) node {$\scriptstyle d$};
\draw (-0.65, -1 + 0.5) node {$\scriptstyle a$};
\draw (-0.65, -2 + 0.5) node {$\scriptstyle b$};
\draw (-0.65, -3 + 0.5) node {$\scriptstyle c$};
\draw (-0.65, -4 + 0.5) node {$\scriptstyle d$};
\foreach \x in {1, 2, 3, 4}{\foreach \y in {1, 2, 4}{\path[fill = gray!25] (\x - 1, -\y + 1) -- (\x, -\y + 1) -- (\x, -\y) -- (\x - 1, -\y) -- cycle;}}
\foreach \x in {1, 2, 4}{\foreach \y in {2, 4}{\path[fill = green!30!gray!50] (\x - 1, -\y + 1) -- (\x, -\y + 1) -- (\x, -\y) -- (\x - 1, -\y) -- cycle;}}
\foreach \x in {1}{\foreach \y in {1}{\path[fill = orange!50!gray!75] (\x - 1, -\y + 1) -- (\x, -\y + 1) -- (\x, -\y) -- (\x - 1, -\y) -- cycle;}}
\foreach \x in {1}{\foreach \y in {2, 4}{\path[pattern = north east lines, pattern color = orange!100!gray] (\x - 1, -\y + 1) -- (\x, -\y + 1) -- (\x, -\y) -- (\x - 1, -\y) -- cycle;}}
\foreach \x in {3}{\foreach \y in {3}{\path[fill = gray!80] (\x - 1, -\y + 1) -- (\x, -\y + 1) -- (\x, -\y) -- (\x - 1, -\y) -- cycle;}}
\foreach \x in {3}{\foreach \y in {1, 2, 4}{\path[pattern = north east lines, pattern color = gray] (\x - 1, -\y + 1) -- (\x, -\y + 1) -- (\x, -\y) -- (\x - 1, -\y) -- cycle;}}
\draw (0, 0) grid (4, -4);
\end{tikzpicture}
\qquad\text{with}\qquad
\begin{tikzpicture}[scale = 0.45, baseline = -0.3cm]
\begin{scope}[yshift = 1.75cm]
\begin{scope}[xshift = -1.2cm]
\draw[fill = green!30!gray!50] (0, 0) -- (1, 0) -- (1, -1) -- (0, -1) -- cycle;
\draw[pattern = north east lines, pattern color = orange!100!gray] (0, 0) -- (1, 0) -- (1, -1) -- (0, -1) -- cycle;
\end{scope}
\begin{scope}[xshift = 0cm]
\draw[fill = gray!25] (0, 0) -- (1, 0) -- (1, -1) -- (0, -1) -- cycle;
\draw[pattern = north east lines, pattern color = gray] (0, 0) -- (1, 0) -- (1, -1) -- (0, -1) -- cycle;
\end{scope}
\draw (2.55, -0.6) node {$\in \ASC$,};
\end{scope}
\begin{scope}[xshift = -2.4cm]
\draw[fill = green!30!gray!50] (0, 0) -- (1, 0) -- (1, -1) -- (0, -1) -- cycle;
\end{scope}
\begin{scope}[xshift = -1.2cm]
\draw[fill = orange!50!gray!75] (0, 0) -- (1, 0) -- (1, -1) -- (0, -1) -- cycle;
\end{scope}
\begin{scope}[xshift = 0cm]
\draw[fill = gray] (0, 0) -- (1, 0) -- (1, -1) -- (0, -1) -- cycle;
\end{scope}
\draw (3.2, -0.5) node {$\in \EQ$, and};
\begin{scope}[yshift = -1.75cm]
\begin{scope}[xshift = -1.2cm]
\draw[fill = gray!25] (0, 0) -- (1, 0) -- (1, -1) -- (0, -1) -- cycle;
\end{scope}
\begin{scope}[xshift = 0cm]
\draw[fill = white] (0, 0) -- (1, 0) -- (1, -1) -- (0, -1) -- cycle;
\end{scope}
\draw (2.5, -0.5) node {$\in \INV$.};
\end{scope}.
\end{tikzpicture}
\]
\end{ex}

Generally the partition of $I \times I$ from $A \wedge B$ can be obtained from that of $A$ by taking each sub-array of cells with indices in $A_{r} \times A_{r}$ and partitioning it according to $B|_{A_{r}}$.  

\subsection{Functorial Subgroups}
\label{sec:CFalgprelims}

The functors used to define the product and coproduct pass between certain pattern subgroups of $\UT(\tau)$.  For a set composition $A \vDash I$, these are respectively denoted by
\begin{align*}
\textbf{Levi:} && \UL(\tau, A) =& \UT\big(\tau \cap \EQ(A)\big)
, \\[0.5em]
\textbf{radical:} && \UR(\tau, A) =& \UT\big(\big\{(i, j) \;\big|\: \text{$i = j$ or $(i, j) \in \tau \cap \ASC(A)$} \big\} \big)
,\,\text{and} \\[0.5em]
\textbf{parabolic:} && \UP(\tau, A) =& \UT\big(\tau \cap \big(\EQ(A) \cup \ASC(A)\big)\big).
\end{align*}

\begin{rem}
The nomenclature and notation for these groups is a deliberate reference to similar structures in the general linear groups; see Section~\ref{sec:GLalg} and Lemma~\ref{lem:subgroupintersection}.
\end{rem}

The visualization of $\EQ(A)$ and $\ASC(A)$ in Section~\ref{sec:CFcombprelims} also describes the groups defined above: each nonzero nondiagonal entry in an element of $\UL(\tau, A)$, $\UR(\tau, A)$, and $\UP(\tau, A)$ corresponds to an above-diagonal cell in $\EQ(A)$, $\ASC(A)$, and $\EQ(A) \cup \ASC(A)$.

\begin{ex}
\label{ex:functorialsubgroups1}
With $I = \{a, b, c, d\}$, $\tau = a < b < c < d$, and $A = (\{a, b, c\}, \{d\})$,
\[
\UL(\tau, A) = 
\begin{tikzpicture}[scale = 0.45, baseline = 0.45*-2.2cm]
\draw (1 - 0.5, 0.65) node {$\scriptstyle a$};
\draw (2 - 0.5, 0.65) node {$\scriptstyle b$};
\draw (3 - 0.5, 0.65) node {$\scriptstyle c$};
\draw (4 - 0.5, 0.65) node {$\scriptstyle d$};
\draw (-0.65, -1 + 0.5) node {$\scriptstyle a$};
\draw (-0.65, -2 + 0.5) node {$\scriptstyle b$};
\draw (-0.65, -3 + 0.5) node {$\scriptstyle c$};
\draw (-0.65, -4 + 0.5) node {$\scriptstyle d$};
\draw (0.2, 0.085) -- (-0.15, 0.085);
\draw[very thick] (-0.1, 0.1) -- (-0.1, -4.1);
\draw (-0.15, -4.085) -- (0.2, -4.085);
\draw (3.8, 0.085) -- (4.15, 0.085);
\draw[very thick] (4.1, 0.1) -- (4.1, -4.1);
\draw (4.15, -4.085) -- (3.8, -4.085);
\foreach \x in {1, 2, 3, 4}{\foreach \y in {1, 2, 3}{\path[fill = gray!25] (\x - 1, -\y + 1) -- (\x, -\y + 1) -- (\x, -\y) -- (\x - 1, -\y) -- cycle;}}
\foreach \x in {4}{\foreach \y in {4}{\path[fill = gray!80] (\x - 1, -\y + 1) -- (\x, -\y + 1) -- (\x, -\y) -- (\x - 1, -\y) -- cycle;}}
\foreach \x in {4}{\foreach \y in {1, 2, 3}{\path[pattern = north east lines, pattern color = gray] (\x - 1, -\y + 1) -- (\x, -\y + 1) -- (\x, -\y) -- (\x - 1, -\y) -- cycle;}}
\foreach \x in {1, 2, 3, 4}{\draw (\x - 0.5, -\x + 0.5) node {$1$};}
\foreach \x in {1}{\foreach \y in {2, 3, 4}{\draw (\x - 0.5, -\y + 0.5) node {$0$};}}
\foreach \x in {2}{\foreach \y in {3, 4}{\draw (\x - 0.5, -\y + 0.5) node {$0$};}}
\foreach \x in {3}{\foreach \y in {4}{\draw (\x - 0.5, -\y + 0.5) node {$0$};}}
\foreach \x in {2, 3}{\foreach \y in {1}{\draw (\x - 0.5, -\y + 0.5) node {$\ast$};}}
\foreach \x in {3}{\foreach \y in {2}{\draw (\x - 0.5, -\y + 0.5) node {$\ast$};}}
\foreach \x in {4}{\foreach \y in {1}{\draw (\x - 0.5, -\y + 0.5) node {$0$};}}
\foreach \x in {4}{\foreach \y in {2}{\draw (\x - 0.5, -\y + 0.5) node {$0$};}}
\foreach \x in {4}{\foreach \y in {3}{\draw (\x - 0.5, -\y + 0.5) node {$0$};}}
\end{tikzpicture},
\quad \UR(\tau, A) = 
\begin{tikzpicture}[scale = 0.45, baseline = 0.45*-2.2cm]
\draw (1 - 0.5, 0.65) node {$\scriptstyle a$};
\draw (2 - 0.5, 0.65) node {$\scriptstyle b$};
\draw (3 - 0.5, 0.65) node {$\scriptstyle c$};
\draw (4 - 0.5, 0.65) node {$\scriptstyle d$};
\draw (-0.65, -1 + 0.5) node {$\scriptstyle a$};
\draw (-0.65, -2 + 0.5) node {$\scriptstyle b$};
\draw (-0.65, -3 + 0.5) node {$\scriptstyle c$};
\draw (-0.65, -4 + 0.5) node {$\scriptstyle d$};
\draw (0.2, 0.085) -- (-0.15, 0.085);
\draw[very thick] (-0.1, 0.1) -- (-0.1, -4.1);
\draw (-0.15, -4.085) -- (0.2, -4.085);
\draw (3.8, 0.085) -- (4.15, 0.085);
\draw[very thick] (4.1, 0.1) -- (4.1, -4.1);
\draw (4.15, -4.085) -- (3.8, -4.085);
\foreach \x in {1, 2, 3, 4}{\foreach \y in {1, 2, 3}{\path[fill = gray!25] (\x - 1, -\y + 1) -- (\x, -\y + 1) -- (\x, -\y) -- (\x - 1, -\y) -- cycle;}}
\foreach \x in {4}{\foreach \y in {4}{\path[fill = gray!80] (\x - 1, -\y + 1) -- (\x, -\y + 1) -- (\x, -\y) -- (\x - 1, -\y) -- cycle;}}
\foreach \x in {4}{\foreach \y in {1, 2, 3}{\path[pattern = north east lines, pattern color = gray] (\x - 1, -\y + 1) -- (\x, -\y + 1) -- (\x, -\y) -- (\x - 1, -\y) -- cycle;}}
\foreach \x in {1, 2, 3, 4}{\draw (\x - 0.5, -\x + 0.5) node {$1$};}
\foreach \x in {1}{\foreach \y in {2, 3, 4}{\draw (\x - 0.5, -\y + 0.5) node {$0$};}}
\foreach \x in {2}{\foreach \y in {3, 4}{\draw (\x - 0.5, -\y + 0.5) node {$0$};}}
\foreach \x in {3}{\foreach \y in {4}{\draw (\x - 0.5, -\y + 0.5) node {$0$};}}
\foreach \x in {2, 3}{\foreach \y in {1}{\draw (\x - 0.5, -\y + 0.5) node {$0$};}}
\foreach \x in {3}{\foreach \y in {2}{\draw (\x - 0.5, -\y + 0.5) node {$0$};}}
\foreach \x in {4}{\foreach \y in {1}{\draw (\x - 0.5, -\y + 0.5) node {$\ast$};}}
\foreach \x in {4}{\foreach \y in {2}{\draw (\x - 0.5, -\y + 0.5) node {$\ast$};}}
\foreach \x in {4}{\foreach \y in {3}{\draw (\x - 0.5, -\y + 0.5) node {$\ast$};}}
\end{tikzpicture},
\quad \text{and} \quad \UP(\tau, A) = 
\begin{tikzpicture}[scale = 0.45, baseline = 0.45*-2.2cm]
\draw (1 - 0.5, 0.65) node {$\scriptstyle a$};
\draw (2 - 0.5, 0.65) node {$\scriptstyle b$};
\draw (3 - 0.5, 0.65) node {$\scriptstyle c$};
\draw (4 - 0.5, 0.65) node {$\scriptstyle d$};
\draw (-0.65, -1 + 0.5) node {$\scriptstyle a$};
\draw (-0.65, -2 + 0.5) node {$\scriptstyle b$};
\draw (-0.65, -3 + 0.5) node {$\scriptstyle c$};
\draw (-0.65, -4 + 0.5) node {$\scriptstyle d$};
\draw (0.2, 0.085) -- (-0.15, 0.085);
\draw[very thick] (-0.1, 0.1) -- (-0.1, -4.1);
\draw (-0.15, -4.085) -- (0.2, -4.085);
\draw (3.8, 0.085) -- (4.15, 0.085);
\draw[very thick] (4.1, 0.1) -- (4.1, -4.1);
\draw (4.15, -4.085) -- (3.8, -4.085);
\foreach \x in {1, 2, 3, 4}{\foreach \y in {1, 2, 3}{\path[fill = gray!25] (\x - 1, -\y + 1) -- (\x, -\y + 1) -- (\x, -\y) -- (\x - 1, -\y) -- cycle;}}
\foreach \x in {4}{\foreach \y in {4}{\path[fill = gray!80] (\x - 1, -\y + 1) -- (\x, -\y + 1) -- (\x, -\y) -- (\x - 1, -\y) -- cycle;}}
\foreach \x in {4}{\foreach \y in {1, 2, 3}{\path[pattern = north east lines, pattern color = gray] (\x - 1, -\y + 1) -- (\x, -\y + 1) -- (\x, -\y) -- (\x - 1, -\y) -- cycle;}}
\foreach \x in {1, 2, 3, 4}{\draw (\x - 0.5, -\x + 0.5) node {$1$};}
\foreach \x in {1}{\foreach \y in {2, 3, 4}{\draw (\x - 0.5, -\y + 0.5) node {$0$};}}
\foreach \x in {2}{\foreach \y in {3, 4}{\draw (\x - 0.5, -\y + 0.5) node {$0$};}}
\foreach \x in {3}{\foreach \y in {4}{\draw (\x - 0.5, -\y + 0.5) node {$0$};}}
\foreach \x in {2, 3, 4}{\foreach \y in {1}{\draw (\x - 0.5, -\y + 0.5) node {$\ast$};}}
\foreach \x in {3, 4}{\foreach \y in {2}{\draw (\x - 0.5, -\y + 0.5) node {$\ast$};}}
\foreach \x in {4}{\foreach \y in {3}{\draw (\x - 0.5, -\y + 0.5) node {$\ast$};}}
\end{tikzpicture}.
\]
With $I$ and $\tau$ as above and  and $B = (\{a, b, d\}, \{c\})$,
\[
\UL(\tau, B) = 
\begin{tikzpicture}[scale = 0.45, baseline = 0.45*-2.2cm]
\draw (1 - 0.5, 0.65) node {$\scriptstyle a$};
\draw (2 - 0.5, 0.65) node {$\scriptstyle b$};
\draw (3 - 0.5, 0.65) node {$\scriptstyle c$};
\draw (4 - 0.5, 0.65) node {$\scriptstyle d$};
\draw (-0.65, -1 + 0.5) node {$\scriptstyle a$};
\draw (-0.65, -2 + 0.5) node {$\scriptstyle b$};
\draw (-0.65, -3 + 0.5) node {$\scriptstyle c$};
\draw (-0.65, -4 + 0.5) node {$\scriptstyle d$};
\draw (0.2, 0.085) -- (-0.15, 0.085);
\draw[very thick] (-0.1, 0.1) -- (-0.1, -4.1);
\draw (-0.15, -4.085) -- (0.2, -4.085);
\draw (3.8, 0.085) -- (4.15, 0.085);
\draw[very thick] (4.1, 0.1) -- (4.1, -4.1);
\draw (4.15, -4.085) -- (3.8, -4.085);
\foreach \x in {1, 2, 3, 4}{\foreach \y in {1, 2, 4}{\path[fill = gray!25] (\x - 1, -\y + 1) -- (\x, -\y + 1) -- (\x, -\y) -- (\x - 1, -\y) -- cycle;}}
\foreach \x in {3}{\foreach \y in {3}{\path[fill = gray!80] (\x - 1, -\y + 1) -- (\x, -\y + 1) -- (\x, -\y) -- (\x - 1, -\y) -- cycle;}}
\foreach \x in {3}{\foreach \y in {1, 2, 4}{\path[pattern = north east lines, pattern color = gray] (\x - 1, -\y + 1) -- (\x, -\y + 1) -- (\x, -\y) -- (\x - 1, -\y) -- cycle;}}
\foreach \x in {1, 2, 3, 4}{\draw (\x - 0.5, -\x + 0.5) node {$1$};}
\foreach \x in {1}{\foreach \y in {2, 3, 4}{\draw (\x - 0.5, -\y + 0.5) node {$0$};}}
\foreach \x in {2}{\foreach \y in {3, 4}{\draw (\x - 0.5, -\y + 0.5) node {$0$};}}
\foreach \x in {3}{\foreach \y in {4}{\draw (\x - 0.5, -\y + 0.5) node {$0$};}}
\foreach \x in {2, 4}{\foreach \y in {1}{\draw (\x - 0.5, -\y + 0.5) node {$\ast$};}}
\foreach \x in {4}{\foreach \y in {2}{\draw (\x - 0.5, -\y + 0.5) node {$\ast$};}}
\foreach \x in {3}{\foreach \y in {1}{\draw (\x - 0.5, -\y + 0.5) node {$0$};}}
\foreach \x in {3}{\foreach \y in {2}{\draw (\x - 0.5, -\y + 0.5) node {$0$};}}
\foreach \x in {4}{\foreach \y in {3}{\draw (\x - 0.5, -\y + 0.5) node {$0$};}}
\end{tikzpicture},
\quad \UR(\tau, B) = 
\begin{tikzpicture}[scale = 0.45, baseline = 0.45*-2.2cm]
\draw (1 - 0.5, 0.65) node {$\scriptstyle a$};
\draw (2 - 0.5, 0.65) node {$\scriptstyle b$};
\draw (3 - 0.5, 0.65) node {$\scriptstyle c$};
\draw (4 - 0.5, 0.65) node {$\scriptstyle d$};
\draw (-0.65, -1 + 0.5) node {$\scriptstyle a$};
\draw (-0.65, -2 + 0.5) node {$\scriptstyle b$};
\draw (-0.65, -3 + 0.5) node {$\scriptstyle c$};
\draw (-0.65, -4 + 0.5) node {$\scriptstyle d$};
\draw (0.2, 0.085) -- (-0.15, 0.085);
\draw[very thick] (-0.1, 0.1) -- (-0.1, -4.1);
\draw (-0.15, -4.085) -- (0.2, -4.085);
\draw (3.8, 0.085) -- (4.15, 0.085);
\draw[very thick] (4.1, 0.1) -- (4.1, -4.1);
\draw (4.15, -4.085) -- (3.8, -4.085);
\foreach \x in {1, 2, 3, 4}{\foreach \y in {1, 2, 4}{\path[fill = gray!25] (\x - 1, -\y + 1) -- (\x, -\y + 1) -- (\x, -\y) -- (\x - 1, -\y) -- cycle;}}
\foreach \x in {3}{\foreach \y in {3}{\path[fill = gray!80] (\x - 1, -\y + 1) -- (\x, -\y + 1) -- (\x, -\y) -- (\x - 1, -\y) -- cycle;}}
\foreach \x in {3}{\foreach \y in {1, 2, 4}{\path[pattern = north east lines, pattern color = gray] (\x - 1, -\y + 1) -- (\x, -\y + 1) -- (\x, -\y) -- (\x - 1, -\y) -- cycle;}}
\foreach \x in {1, 2, 3, 4}{\draw (\x - 0.5, -\x + 0.5) node {$1$};}
\foreach \x in {1}{\foreach \y in {2, 3, 4}{\draw (\x - 0.5, -\y + 0.5) node {$0$};}}
\foreach \x in {2}{\foreach \y in {3, 4}{\draw (\x - 0.5, -\y + 0.5) node {$0$};}}
\foreach \x in {3}{\foreach \y in {4}{\draw (\x - 0.5, -\y + 0.5) node {$0$};}}
\foreach \x in {2, 4}{\foreach \y in {1}{\draw (\x - 0.5, -\y + 0.5) node {$0$};}}
\foreach \x in {4}{\foreach \y in {2}{\draw (\x - 0.5, -\y + 0.5) node {$0$};}}
\foreach \x in {4}{\foreach \y in {3}{\draw (\x - 0.5, -\y + 0.5) node {$0$};}}
\foreach \x in {3}{\foreach \y in {1}{\draw (\x - 0.5, -\y + 0.5) node {$\ast$};}}
\foreach \x in {3}{\foreach \y in {2}{\draw (\x - 0.5, -\y + 0.5) node {$\ast$};}}
\end{tikzpicture},
\quad \text{and} \quad \UP(\tau, B) = 
\begin{tikzpicture}[scale = 0.45, baseline = 0.45*-2.2cm]
\draw (1 - 0.5, 0.65) node {$\scriptstyle a$};
\draw (2 - 0.5, 0.65) node {$\scriptstyle b$};
\draw (3 - 0.5, 0.65) node {$\scriptstyle c$};
\draw (4 - 0.5, 0.65) node {$\scriptstyle d$};
\draw (-0.65, -1 + 0.5) node {$\scriptstyle a$};
\draw (-0.65, -2 + 0.5) node {$\scriptstyle b$};
\draw (-0.65, -3 + 0.5) node {$\scriptstyle c$};
\draw (-0.65, -4 + 0.5) node {$\scriptstyle d$};
\draw (0.2, 0.085) -- (-0.15, 0.085);
\draw[very thick] (-0.1, 0.1) -- (-0.1, -4.1);
\draw (-0.15, -4.085) -- (0.2, -4.085);
\draw (3.8, 0.085) -- (4.15, 0.085);
\draw[very thick] (4.1, 0.1) -- (4.1, -4.1);
\draw (4.15, -4.085) -- (3.8, -4.085);
\foreach \x in {1, 2, 3, 4}{\foreach \y in {1, 2, 4}{\path[fill = gray!25] (\x - 1, -\y + 1) -- (\x, -\y + 1) -- (\x, -\y) -- (\x - 1, -\y) -- cycle;}}
\foreach \x in {3}{\foreach \y in {3}{\path[fill = gray!80] (\x - 1, -\y + 1) -- (\x, -\y + 1) -- (\x, -\y) -- (\x - 1, -\y) -- cycle;}}
\foreach \x in {3}{\foreach \y in {1, 2, 4}{\path[pattern = north east lines, pattern color = gray] (\x - 1, -\y + 1) -- (\x, -\y + 1) -- (\x, -\y) -- (\x - 1, -\y) -- cycle;}}
\foreach \x in {1, 2, 3, 4}{\draw (\x - 0.5, -\x + 0.5) node {$1$};}
\foreach \x in {1}{\foreach \y in {2, 3, 4}{\draw (\x - 0.5, -\y + 0.5) node {$0$};}}
\foreach \x in {2}{\foreach \y in {3, 4}{\draw (\x - 0.5, -\y + 0.5) node {$0$};}}
\foreach \x in {3}{\foreach \y in {4}{\draw (\x - 0.5, -\y + 0.5) node {$0$};}}
\foreach \x in {4}{\foreach \y in {3}{\draw (\x - 0.5, -\y + 0.5) node {$0$};}}
\foreach \x in {2, 3, 4}{\foreach \y in {1}{\draw (\x - 0.5, -\y + 0.5) node {$\ast$};}}
\foreach \x in {3, 4}{\foreach \y in {2}{\draw (\x - 0.5, -\y + 0.5) node {$\ast$};}}
\end{tikzpicture}.
\]
\end{ex}

Note that
\begin{equation}
\label{eq:leviiso}
\UL(\tau, A) = \UT(\tau|_{A_{1}}) \oplus \cdots \oplus \UT(\tau|_{A_{\ell}}) 
\end{equation}
so that $\cf(\UL(\tau, A))$ is a summand of $\cfUT(A)$, as in Equation~\eqref{eq:cfA}.

\begin{prop}
\label{lem:functorialsubgroupinteractions}
\label{lem:compatibleparabolics}
Let $I$ be a finite set $\tau \in \TTT[I]$ a total order and $A \vDash I$ a set composition.
\begin{enumerate}
\item If $\ASC(A) \subseteq \tau$, then $\UP(\tau, A) = \UT(\tau)$ and $\UR(\tau, A) = \UT\big(\ASC(A) \cup \{(i, i) \;|\; i \in I\} \big)$; 

\item $\UR(\tau, A)$ is a normal complement of $\UL(\tau, A)$ in $\UP(\tau, A)$, so that 
\[
\UP(\tau, A) = \UL(\tau, A) \ltimes \UR(\tau, A).
\]

\end{enumerate}
\end{prop}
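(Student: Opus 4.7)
The plan is to exploit the disjoint partition $I \times I = \EQ(A) \sqcup \ASC(A) \sqcup \INV(A)$ from Section~\ref{sec:CFcombprelims} and to perform direct matrix computations in the relevant pattern subgroups.

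For part (1), I would first show that $\tau \cap \INV(A) = \emptyset$ under the hypothesis $\ASC(A) \subseteq \tau$. Indeed, any $(i, j) \in \INV(A)$ has $i \neq j$ and $(j, i) \in \ASC(A) \subseteq \tau$, so if $(i, j)$ also belonged to $\tau$ then the antisymmetry of the total order $\tau$ would fail. Since the diagonal of $I \times I$ lies in $\EQ(A)$, this forces $\tau = \tau \cap (\EQ(A) \cup \ASC(A))$ and hence $\UP(\tau, A) = \UT(\tau)$; the hypothesis also gives $\tau \cap \ASC(A) = \ASC(A)$, producing the claimed formula for $\UR(\tau, A)$.

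For part (2), three things must be verified: (i) $\UL(\tau, A) \cap \UR(\tau, A) = \{1_I\}$, (ii) $\UL(\tau, A) \cdot \UR(\tau, A) = \UP(\tau, A)$, and (iii) $\UR(\tau, A) \trianglelefteq \UP(\tau, A)$. Point (i) is immediate from $\EQ(A) \cap \ASC(A) = \emptyset$: an element of the intersection has off-diagonal entries supported in $(\tau \cap \EQ(A)) \cap (\tau \cap \ASC(A)) = \emptyset$, so must equal $1_I$. For (ii), I would use $|\UT(\pi)| = q^{|\pi| - |I|}$; since $\EQ(A)$ and $\ASC(A)$ are disjoint and each of the defining relations contains the diagonal, $|\UL(\tau, A)| \cdot |\UR(\tau, A)| = q^{|\tau \cap \EQ(A)| + |\tau \cap \ASC(A)| - |I|} = |\UP(\tau, A)|$. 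Combined with point (i) and the containments $\UL(\tau, A), \UR(\tau, A) \leq \UP(\tau, A)$, this forces $\UP(\tau, A) = \UL(\tau, A) \cdot \UR(\tau, A)$.

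The main obstacle is point (iii), normality. Given (ii) and the fact that $\UR(\tau, A)$ is itself a group, it suffices to show $\ell r \ell^{-1} \in \UR(\tau, A)$ for every $\ell \in \UL(\tau, A)$ and $r \in \UR(\tau, A)$. Via Equation~\eqref{eq:leviiso}, write $\ell$ as block-diagonal with blocks $\ell_s \in \UT(\tau|_{A_s})$ and write $r$ as the block matrix $(r_{pq})$; then the $(p, q)$ block of $\ell r \ell^{-1}$ equals $\ell_p r_{pq} \ell_q^{-1}$, which reduces to the identity when $p = q$ and to $0$ for $p > q$. For $p < q$, the $(i, j)$ entry of $\ell_p r_{pq} \ell_q^{-1}$ is
\[
\sum_{\substack{k \in A_p \\ l \in A_q}} (\ell_p)_{ik} (r_{pq})_{kl} (\ell_q^{-1})_{lj}.
\]
Since $\ell_p$ and $\ell_q^{-1}$ are $\tau|_{A_p}$- and $\tau|_{A_q}$-upper unitriangular and $r_{pq}$ is supported on $\tau \cap (A_p \times A_q)$, any nonzero summand forces $(i, k), (k, l), (l, j) \in \tau$; transitivity of $\tau$ then yields $(i, j) \in \tau$. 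Hence every off-diagonal entry of $\ell r \ell^{-1}$ lies in $\tau \cap \ASC(A)$, placing $\ell r \ell^{-1}$ in $\UR(\tau, A)$ and completing the proof.
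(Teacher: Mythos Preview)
Your proof is correct. Parts~(1) and the set-up of part~(2)---trivial intersection and the order count forcing $\UP(\tau,A)=\UL(\tau,A)\,\UR(\tau,A)$---match the paper's argument essentially verbatim.

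Where you diverge from the paper is in establishing normality of $\UR(\tau,A)$ in $\UP(\tau,A)$. The paper proceeds in two stages: first it handles the special case $\ASC(A)\subseteq\tau$ by applying the combinatorial criterion of Lemma~\ref{lem:patterngroupnormalcy}, and then it reduces the general case to this one by observing that $\UR(\tau,A)$ and $\UP(\tau,A)$ are obtained from $\UR(\tau|_{A_1}\ordinalsum\cdots\ordinalsum\tau|_{A_\ell},A)$ and $\UP(\tau|_{A_1}\ordinalsum\cdots\ordinalsum\tau|_{A_\ell},A)$ by intersecting with $\UT(\tau)$, an operation that preserves normality. Your approach instead gives a single direct block-matrix computation of $\ell r\ell^{-1}$ valid for arbitrary $\tau$, using only transitivity of $\tau$ and the shape of the supports. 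This is more elementary---it needs neither Lemma~\ref{lem:patterngroupnormalcy} nor the reduction step---while the paper's route has the advantage of illustrating how the general pattern-group normality criterion applies and of making the ordinal-sum structure explicit for later use.
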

\begin{proof}
For 1., assume that $\ASC(A) \subseteq \tau$ and take $(i, j) \in \tau$.  If  $i$ and $j$ are in the same part of $A$, then $(i, j) \in \EQ(A)$, and otherwise $(i, j) \in \ASC(A)$ by assumption $(i, j) \in \ASC(A)$.  

For 2., note that $\UL(\tau, A)$ and $\UR(\tau, A)$ have trivial intersection, so by order consideration $\UP(\tau, A) = \UL(\tau, A)\UR(\tau, A)$.  What remains is to show that $\UR(\tau, A) \trianglelefteq \UP(\tau, A)$.

First assume that $\ASC(A) \subseteq \tau$ as above.  Applying Proposition~\ref{lem:patterngroupnormalcy}, take an element $(r, s) \in \ASC(A)$, and let $s$ and $t$ be the elements $1 \le s < t \le \ell(A)$ for which $j \in A_{s}$ and $k \in A_{t}$.  For each pair $(i, j), (k, l) \in \tau$, either $i \in A_{s}$ or $i \in A_{r}$ for $r \le s$, and likewise either $l \in A_{t}$ or $l \in A_{u}$ for $u \ge t$, so in any case $(i, l) \in \ASC(A)$, so that $\UR(\tau, A) \trianglelefteq \UT(\tau) = \UP(\tau, A)$.

Returning to the general case, the final step is to reduce this case to that of the previous paragraph.  By definition,
\[
\tau|_{A_{1}} \ordinalsum \cdots \ordinalsum \tau|_{A_{\ell}} = \tau|_{A_{1}} \sqcup \cdots \sqcup \tau|_{A_{\ell}} \sqcup \ASC(A), 
\]
so $\ASC(A) \subseteq \tau|_{A_{1}} \ordinalsum \cdots \ordinalsum \tau|_{A_{\ell}}$.  Thus, the relevant definitions imply that
\[
\UR(\tau, A) = \UT(\tau) \cap \UR(\tau|_{A_{1}} \ordinalsum \cdots \ordinalsum \tau|_{A_{\ell}}, A)
\]
and
\[
\UP(\tau, A) = \UT(\tau) \cap \UP(\tau|_{A_{1}} \ordinalsum \cdots \ordinalsum \tau|_{A_{\ell}}, A).
\]
Normalcy is preserved under intersection with a third group, so this completes the proof.
\end{proof}

\subsection{The downward functor $\resf$; coassociativity}
\label{sec:downmaps}

Let $A \vDash I$.  The map used in the coproduct of $\cfUT$ is the composition 
\[
\resf^{\UT(\tau)}_{\UL(\tau, A)} \colon\;\cf(\UT(\tau)) \xrightarrow{\:\res^{\UT(\tau)}_{\UP(\tau, A)}\;} \cf(\UP(\tau, A)) \xrightarrow{\;\defl^{\UP(\tau, A)}_{\UL(\tau, A)}\;} \cf(\UL(\tau, A))
\]
where $\res$ and $\defl$ are as defined in Section~\ref{sec:classfunctions}.

\begin{prop}
\label{prop:downtransitivity}
Let $I$ be a finite set, $\tau \in \TTT[I]$ a total order, and $A, B \vDash I$ set compositions with $B$ refining $A$. 
The diagram
\[
\begin{tikzpicture}[baseline = -1.1cm]
\draw (10, -2) node[] (B) {$\cf(\UL(\tau, B))$};
\draw (0, -2) node[] (A)  {$\cf(\UL(\tau, A))$};
\draw (0, 0) node[] (I) {$\cf(\UT(\tau))$};
\draw[thick, <-] (B) -- node[above] {$\resf^{\UT(\tau)}_{\UL(\tau, B)}$} (I);
\draw[thick, <-] (B) -- node[below] {$\resf^{\UT(\tau|_{A_{1}})}_{\UL(\tau|_{A_{1}}, B|_{A_{1}})} \otimes \cdots \otimes \resf^{\UT(\tau|_{A_{\ell}})}_{\UL(\tau|_{A_{\ell}}, B|_{A_{\ell}})}$} (A);
\draw[thick, <-] (A) -- node[left] {$\resf^{\UT(\tau)}_{\UL(\tau, A)}$} (I);
\end{tikzpicture}
\]
commutes.
\end{prop}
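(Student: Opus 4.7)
The approach is to expand $\resf = \defl \circ \res$ on both sides, factor each composition through an intermediate parabolic subgroup, and then reduce to a single Mackey-type commutation. Write $P_{A} = \UP(\tau, A)$, $L_{A} = \UL(\tau, A)$, $R_{A} = \UR(\tau, A)$, with $P_{B}, L_{B}, R_{B}$ defined analogously for $B$. Introduce the auxiliary subgroup $\tilde{P}_{B} := \bigoplus_{i} \UP(\tau|_{A_{i}}, B|_{A_{i}}) \subseteq L_{A}$ and its unipotent radical $\tilde{R}_{B} := \bigoplus_{i} \UR(\tau|_{A_{i}}, B|_{A_{i}})$.

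The first step is to establish the subgroup structure. Proposition~\ref{prop:refinementsets} together with the definitions in Section~\ref{sec:CFalgprelims} yields $P_{B} \subseteq P_{A}$, $L_{B} \subseteq L_{A}$, and the identification $L_{B} = \bigoplus_{i} \UL(\tau|_{A_{i}}, B|_{A_{i}})$. A direct count of pattern positions shows $\tilde{P}_{B} \cap R_{A} = 1$ and $\tilde{P}_{B} \cdot R_{A} = P_{B}$; since $R_{A} \trianglelefteq P_{A} \supseteq P_{B}$, this gives the semidirect product $P_{B} = \tilde{P}_{B} \ltimes R_{A}$ and likewise $R_{B} = \tilde{R}_{B} \ltimes R_{A}$, producing a chain of normal quotients $P_{B} \twoheadrightarrow \tilde{P}_{B} \twoheadrightarrow L_{B}$ whose composition is the canonical map $P_{B} \twoheadrightarrow L_{B}$.

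By transitivity of restriction along $L_{B} \subseteq \tilde{P}_{B} \subseteq P_{B} \subseteq P_{A} \subseteq \UT(\tau)$ and of deflation along the quotient chain above, the left-hand side factors as
\[
\resf^{\UT(\tau)}_{\UL(\tau, B)} = \defl^{\tilde{P}_{B}}_{L_{B}} \circ \defl^{P_{B}}_{\tilde{P}_{B}} \circ \res^{P_{A}}_{P_{B}} \circ \res^{\UT(\tau)}_{P_{A}}.
\]
The key step is then the Mackey-type identity
\[
\defl^{P_{B}}_{\tilde{P}_{B}} \circ \res^{P_{A}}_{P_{B}} = \res^{L_{A}}_{\tilde{P}_{B}} \circ \defl^{P_{A}}_{L_{A}},
\]
which I would verify by direct calculation: evaluated on $\psi \in \cf(P_{A})$ at $g \in \tilde{P}_{B}$, both sides equal $\tfrac{1}{|R_{A}|} \sum_{r \in R_{A}} \psi(gr)$, using $P_{B} = \tilde{P}_{B} \ltimes R_{A}$ on the left and $P_{A} = L_{A} \ltimes R_{A}$ on the right.

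Substituting this identity collapses the factorization to $\defl^{\tilde{P}_{B}}_{L_{B}} \circ \res^{L_{A}}_{\tilde{P}_{B}} \circ \resf^{\UT(\tau)}_{\UL(\tau, A)}$. The direct sum decompositions $L_{A} = \bigoplus_{i} \UT(\tau|_{A_{i}})$, $\tilde{P}_{B} = \bigoplus_{i} \UP(\tau|_{A_{i}}, B|_{A_{i}})$, and $L_{B} = \bigoplus_{i} \UL(\tau|_{A_{i}}, B|_{A_{i}})$ then identify the prefix $\defl^{\tilde{P}_{B}}_{L_{B}} \circ \res^{L_{A}}_{\tilde{P}_{B}}$ with the tensor product $\bigotimes_{i} \resf^{\UT(\tau|_{A_{i}})}_{\UL(\tau|_{A_{i}}, B|_{A_{i}})}$, matching the right-hand side of the desired diagram. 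I expect the main obstacle to be the bookkeeping in step one, in particular verifying $P_{B} = \tilde{P}_{B} \ltimes R_{A}$, though this reduces to a matrix-positions calculation made transparent by the cell visualization of Section~\ref{sec:CFcombprelims}; the remainder of the argument consists of standard transitivity statements and a one-line Mackey computation.
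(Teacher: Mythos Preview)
Your proposal is correct and follows essentially the same idea as the paper's proof: both hinge on the factorization $R_{B} = \tilde{R}_{B} \ltimes R_{A}$ (in the paper's notation, $\UR(\tau,B) = \big(\UR(\tau,B)\cap \UL(\tau,A)\big)\rtimes \UR(\tau,A)$), so that the two-step average over $R_{A}$ then $\tilde{R}_{B}$ coincides with the single average over $R_{B}$. The paper carries this out by one direct computation of both sides as $\tfrac{1}{|R_{B}|}\sum_{x\in R_{B}}\psi(gx)$, whereas you package the same calculation as transitivity of $\res$ and $\defl$ plus a one-line Mackey commutation through $\tilde{P}_{B}$; the extra structure is harmless but not needed.
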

\begin{proof}
By Propositions~\ref{prop:refinementsets} and~\ref{lem:functorialsubgroupinteractions}, $\UR(\tau, B) \subseteq \UP(\tau, A)$, and moreover 
\[
\UR(\tau, B) = \big(\UR(\tau, B) \cap \UL(\tau, A)\big) \rtimes \big(\UR(\tau, B) \cap \UR(\tau, A)\big).
\]
Using Proposition~\ref{prop:refinementsets} again,
\[
\UR(\tau, B) \cap \UL(\tau, A) = \UR(\tau|_{A_{1}}, B|_{A_{1}}) \oplus \cdots \oplus \UR(\tau|_{A_{\ell}}, B|_{A_{\ell}}) 
\]
and
\[
\UR(\tau, B) \cap \UR(\tau, A) = \UR(\tau, A),
\]
so each element of $\UR(\tau, B)$ has a unique expression as the product of elements from the two subgroups displayed above.  
As a consequence, for $g \in \UL(\tau, B)$ and $\psi \in \cf(\UL(\tau, A))$, 
\begin{align*}
\left( \bigotimes_{i = 1}^{\ell} \resf^{\UT(\tau|_{A_{i}})}_{\UL(\tau|_{A_{i}}, B|_{A_{i}})} \right) \circ \resf^{\UT(\tau)}_{\UL(\tau, A)}(\psi)(g) &= \frac{1}{|\UR(\tau, B)|}  \sum_{x \in \UR(\tau, B)} \hspace{-0.5em} \psi(gx) \\
&= \resf^{\UT(\tau)}_{\UL(\tau, B)}(\psi)(g). \qedhere
\end{align*}
\end{proof}

\subsection{The upward functor $\infl$; associativity}
\label{sec:upmaps}

This section will define the representation theoretic map used in the product of $\cfUT$, and establish its associativity.  A consequence of Proposition~\ref{lem:functorialsubgroupinteractions}, $\UT(\tau) = \UL(\tau, A) \rtimes \UR(\tau, A)$ for any composition $A \vDash I$ with $\ASC(A) \subseteq \tau$; in this case inflation gives a map
\[
\infl^{\UT(\tau)}_{\UL(\tau, A)} \colon \cf(\UL(\tau, A)) \to \cf(\UT(\tau))
\]
as in Section~\ref{sec:classfunctions}.  
This coincides with the product map used in~\cite{AgBerTh}, so its associativity is known; the following results and proofs are included for the sake of completeness.

\begin{lem}
Let $I$ be a finite set, $\tau \in \TTT[I]$ a total order, and $A \vDash I$ a set composition with $\ASC(A) \subseteq \tau$.  The maps $\infl^{\UT(\tau)}_{\UL(\tau, A)}$ and $\resf^{\UT(\tau)}_{\UL(\tau, A)}$ are adjoint.
\end{lem}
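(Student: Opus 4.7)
The plan is to reduce the claim to the standard adjointness of inflation and deflation along a normal subgroup, and then verify that adjointness by a direct coset computation.

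First, I would unpack the definition of $\resf^{\UT(\tau)}_{\UL(\tau, A)}$ under the hypothesis $\ASC(A) \subseteq \tau$. By Proposition~\ref{lem:functorialsubgroupinteractions}(1), this hypothesis forces $\UP(\tau, A) = \UT(\tau)$, so the restriction stage $\res^{\UT(\tau)}_{\UP(\tau, A)}$ is the identity. Part~(2) of the same proposition then gives the semidirect decomposition
\[
\UT(\tau) = \UL(\tau, A) \ltimes \UR(\tau, A),
\]
which in particular identifies $\UL(\tau, A)$ with the quotient $\UT(\tau)/\UR(\tau, A)$. Under this identification $\resf^{\UT(\tau)}_{\UL(\tau, A)}$ collapses to the deflation map $\defl^{\UT(\tau)}_{\UL(\tau, A)}$ of Section~\ref{sec:classfunctions}, and $\infl^{\UT(\tau)}_{\UL(\tau, A)}$ is precisely the inflation along this quotient. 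Thus the lemma reduces to showing that inflation and deflation along $\UR(\tau, A) \trianglelefteq \UT(\tau)$ are mutually adjoint with respect to the inner products on $\cf(\UT(\tau))$ and $\cf(\UL(\tau, A))$.

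Next I would carry out this adjointness check directly. Fix $\phi \in \cf(\UL(\tau, A))$ and $\psi \in \cf(\UT(\tau))$. Using the coset decomposition of $\UT(\tau)$ into fibers of the projection onto $\UL(\tau, A)$, together with the fact that $\infl^{\UT(\tau)}_{\UL(\tau, A)}(\phi)$ is constant along each coset, one computes
\[
\langle \infl^{\UT(\tau)}_{\UL(\tau, A)}(\phi), \psi \rangle_{\UT(\tau)} = \frac{1}{|\UT(\tau)|} \sum_{g \in \UL(\tau, A)} \sum_{x \in \UR(\tau, A)} \phi(g)\, \overline{\psi(gx)},
\]
and the inner sum over $\UR(\tau, A)$ reassembles by definition into $|\UR(\tau, A)| \cdot \overline{\defl^{\UT(\tau)}_{\UL(\tau, A)}(\psi)(g)}$. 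Since $|\UT(\tau)| = |\UL(\tau, A)| \cdot |\UR(\tau, A)|$, this rearranges exactly to $\langle \phi, \resf^{\UT(\tau)}_{\UL(\tau, A)}(\psi) \rangle_{\UL(\tau, A)}$.

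I do not anticipate a serious obstacle; the only mild subtlety is bookkeeping the group orders and making sure the semidirect decomposition is being used in the right direction (so that summing $\psi(gx)$ over $x \in \UR(\tau, A)$ for a single coset representative $g \in \UL(\tau, A)$ genuinely averages $\psi$ over the fiber above $g$). This is guaranteed by part~(2) of Proposition~\ref{lem:functorialsubgroupinteractions}, which supplies a unique factorization of each element of $\UT(\tau)$ as a product $g x$ with $g \in \UL(\tau, A)$ and $x \in \UR(\tau, A)$.
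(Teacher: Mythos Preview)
Your proposal is correct and follows the same approach as the paper: use Proposition~\ref{lem:functorialsubgroupinteractions}(1) to get $\UP(\tau,A)=\UT(\tau)$, so that $\resf$ collapses to $\defl$, and then invoke the adjointness of inflation and deflation. The paper simply cites this last adjointness as known rather than verifying it directly, but otherwise the arguments coincide.
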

\begin{proof}
By assumption, $\UP(\tau, A) = \UT(\tau)$, so $\resf^{\UT(\tau)}_{\UL(\tau, A)} = \defl^{\UT(\tau)}_{\UL(\tau, A)}$; the claim now follows from the fact that inflation and deflation are adjoint.
\end{proof}

By the uniqueness of adjoints, the transitive property described in Proposition~\ref{prop:downtransitivity} for $\resf^{\UT(\tau)}_{\UL(\tau, A)}$ extends to the inflation map $\infl^{\UT(\tau)}_{\UL(\tau, A)}$, giving the following result.

\begin{prop}
\label{prop:upwardtransitivity}
Let $I$ be a finite set, $\tau \in \TTT[I]$ a total order, and $A, B \vDash I$ set compositions with $B$ refining $A$ and with $\ASC(A), \ASC(B) \subseteq \tau$.  The diagram
\[
\begin{tikzpicture}[baseline = -1.1cm]
\draw (0, 0) node[] (B) {$\cf(\UL(\tau, B))$};
\draw (10, 0) node[] (A)  {$\cf(\UL(\tau, A))$};
\draw (10, -2) node[] (I) {$\cf(\UT(\tau))$};
\draw[thick, ->] (B) -- node[below left] {$\infl^{\UT(\tau)}_{\UL(\tau, B)}$} (I);
\draw[thick, ->] (B) -- node[above] {$\infl^{\UT(\tau|_{A_{1}})}_{\UL(\tau|_{A_{1}}, B|_{A_{1}})} \otimes \cdots \otimes \infl^{\UT(\tau|_{A_{\ell}})}_{\UL(\tau|_{A_{\ell}}, B|_{A_{\ell}})}$} (A);
\draw[thick, ->] (A) -- node[right] {$\infl^{\UT(\tau)}_{\UL(\tau, A)}$} (I);
\end{tikzpicture}
\]
commutes.
\end{prop}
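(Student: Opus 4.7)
The plan is to deduce this from Proposition~\ref{prop:downtransitivity} by a direct adjointness argument: the diagram displayed here is exactly the one obtained from the Proposition~\ref{prop:downtransitivity} diagram by reversing every arrow and replacing each $\resf$ by the corresponding $\infl$, and the preceding Lemma supplies the needed adjunctions.

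More concretely, I would first verify that the preceding Lemma applies to every inflation map appearing in the square. For the outer maps $\infl^{\UT(\tau)}_{\UL(\tau, A)}$ and $\infl^{\UT(\tau)}_{\UL(\tau, B)}$ this is immediate from the hypotheses $\ASC(A), \ASC(B) \subseteq \tau$. For the tensor factor $\infl^{\UT(\tau|_{A_i})}_{\UL(\tau|_{A_i}, B|_{A_i})}$, one must check $\ASC(B|_{A_i}) \subseteq \tau|_{A_i}$. Since $B$ refines $A$, we have $A \titsprod B = B$, so Proposition~\ref{prop:refinementsets} gives $\ASC(B) = \ASC(A) \sqcup \bigsqcup_i \ASC(B|_{A_i})$; in particular each $\ASC(B|_{A_i}) \subseteq \ASC(B) \subseteq \tau$, and since $\ASC(B|_{A_i}) \subseteq A_i \times A_i$ this intersection lies in $\tau|_{A_i}$.

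With the adjunctions established, the argument becomes formal. Taking the adjoint of each map in the commutative triangle of Proposition~\ref{prop:downtransitivity} reverses the direction of every arrow and converts every $\resf$ into the corresponding $\infl$ (here I use that the adjoint of a tensor product of maps is the tensor product of their adjoints, under the canonical identification $\cf(G \oplus H) \cong \cf(G) \otimes \cf(H)$). Commutativity of a triangle is preserved under taking adjoints, since $(fg)^* = g^* f^*$, and the resulting triangle is exactly the diagram that must be shown to commute.

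The only real content, beyond the formal adjointness manipulation, is the $\ASC$ bookkeeping needed to apply the Lemma to each map; this is the step I would expect a reader to want spelled out, and it is the only place where the refinement hypothesis on $B$ is used.
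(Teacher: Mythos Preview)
Your proposal is correct and follows essentially the same approach as the paper: the paper's justification is simply that ``by the uniqueness of adjoints, the transitive property described in Proposition~\ref{prop:downtransitivity} for $\resf$ extends to $\infl$,'' invoking the preceding Lemma. Your write-up is more careful than the paper in explicitly verifying the hypothesis $\ASC(B|_{A_i}) \subseteq \tau|_{A_i}$ needed to apply the Lemma to each tensor factor, which the paper leaves implicit.
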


\subsection{Compatibility and Naturality}
\label{sec:compatibility}
\label{sec:naturality}

This section establishes the remaining results used in the proof of Theorem~\ref{thm:CFhopfmonoid}.  
I will start with the compatibility axiom.  
In the following result, note that for any $A, B \vDash I$, Proposition~\ref{prop:refinementsets} gives that 
\[
\UL(\tau, A \titsprod B) = \UL(\tau, A) \cap \UL(\tau, B) = \UL(\tau, B \titsprod A).
\]

\begin{prop}
\label{prop:compatibility}
Let $I$ be a finite set, $\tau \in \TTT[I]$ a total order, and $A, B \vDash I$ set compositions with $\ASC(A) \subseteq \tau$.  The diagram
\begin{center}
\begin{tikzpicture}
\draw (0, -3) node[] (ULAB) {$\cf(\UL(\tau; A \titsprod B))$};
\draw (9, -3) node[] (ULBA) {$\cf(\UL(\tau; B \titsprod A))$};
\draw (0, 0) node[] (ULA) {$\cf(\UL(\tau; A))$};
\draw (4.5, 0) node[] (UT) {$\cf(\UT(\tau))$};
\draw (9, 0) node[] (ULB) {$\cf(\UL(\tau; B))$};
\draw[thick, ->] (ULA) -- node[above] {$\infl^{\UT(\tau)}_{\UL(\tau, A)}$} (UT);
\draw[thick, ->] (UT) -- node[above] {$\resf^{\UT(\tau)}_{\UL(\tau, B)}$} (ULB);
\draw[thick, ->] (ULA) -- node[left] {$\displaystyle \bigotimes_{i = 1}^{\ell(A)} \resf^{\UT(\tau|_{A_{i}})}_{\UL(\tau|_{A_{i}}, B|_{A_{i}})}$} (ULAB);
\draw[thick, ->] (ULAB) -- node[above] {$=$} (ULBA);
\draw[thick, ->] (ULBA) -- node[right] {$\displaystyle \bigotimes_{j = 1}^{\ell(B)} \infl^{\UT(\tau|_{B_{i}})}_{\UL(\tau|_{B_{i}}, A|_{B_{i}})}$} (ULB);
\end{tikzpicture}
\end{center}
commutes.
\end{prop}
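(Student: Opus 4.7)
The plan is to compute both compositions in the diagram explicitly at $\psi \in \cf(\UL(\tau, A))$ and $l \in \UL(\tau, B)$ and show they yield the same formula. Abbreviate $L = \UL(\tau, A)$ and $R = \UR(\tau, A)$; since $\ASC(A) \subseteq \tau$, Proposition~\ref{lem:compatibleparabolics} gives $\UT(\tau) = L \ltimes R$. Set $C = A \titsprod B$. By Proposition~\ref{prop:refinementsets}, $\EQ(A \titsprod B) = \EQ(A) \cap \EQ(B) = \EQ(B \titsprod A)$, so $\UL(\tau, A \titsprod B)$ and $\UL(\tau, B \titsprod A)$ coincide, and this common subgroup $\UL(\tau, C)$ equals $L \cap \UL(\tau, B)$. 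Let $Q = \bigoplus_i \UR(\tau|_{A_i}, B|_{A_i})$ and $Q' = \bigoplus_j \UR(\tau|_{B_j}, A|_{B_j})$; applying Proposition~\ref{prop:refinementsets} again identifies them as the pattern subgroups on $\EQ(A) \cap \ASC(B) \cap \tau$ and $\EQ(B) \cap \ASC(A) \cap \tau$, so $Q \subseteq L$ and $Q' \subseteq R$.

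The structural input is that applying Proposition~\ref{lem:compatibleparabolics}(2) block-by-block to each $\UT(\tau|_{B_j}) = \UL(\tau|_{B_j}, A|_{B_j}) \ltimes \UR(\tau|_{B_j}, A|_{B_j})$ (valid since $\ASC(A|_{B_j}) \subseteq \ASC(A) \subseteq \tau$) and assembling over $j$ yields $\UL(\tau, B) = \UL(\tau, C) \ltimes Q'$. Because $\UL(\tau, C) \subseteq L$ and $Q' \subseteq R$, the resulting factorization $l = l_L l_R$ of any $l \in \UL(\tau, B)$ is simultaneously the Levi--radical decomposition of $l$ in $\UT(\tau) = L \ltimes R$; so $l_L$ is the Levi projection of $l$. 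An analogous partition of $\ASC(B) \cap \tau$ by $A$-type together with a counting argument gives every $u \in \UR(\tau, B)$ a unique factorization $u = u_E u_V$ with $u_E \in Q \subseteq L$ and $u_V \in \UR(\tau, A) \cap \UR(\tau, B) \subseteq R$.

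With these decompositions I compute the right-hand path: writing $\ell(h)$ for the Levi projection of $h \in \UT(\tau)$, inflation gives $\infl^{\UT(\tau)}_{L}(\psi)(h) = \psi(\ell(h))$, so
\[
\resf^{\UT(\tau)}_{\UL(\tau, B)} \circ \infl^{\UT(\tau)}_{L}(\psi)(l) = \frac{1}{|\UR(\tau, B)|} \sum_{u \in \UR(\tau, B)} \psi(\ell(lu)).
\]
Writing $l = l_L l_R$ and $u = u_E u_V$, I factor $lu = l_L(l_R u_E) u_V = (l_L u_E)(u_E^{-1} l_R u_E \cdot u_V)$; since $R$ is normal in $\UT(\tau)$, the second parenthesized factor lies in $R$, so $\ell(lu) = l_L u_E$. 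The sum over $u_V$ contributes only its cardinality, and the expression collapses to $\frac{1}{|Q|} \sum_{v \in Q} \psi(l_L v)$. The left-hand path yields the same formula by direct assembly: $\bigotimes_i \resf^{\UT(\tau|_{A_i})}_{\UL(\tau|_{A_i}, B|_{A_i})}$ sends $\psi$ to $\psi'(c) = \tfrac{1}{|Q|} \sum_{v \in Q} \psi(cv)$ on $\UL(\tau, C)$, and then $\bigotimes_j \infl^{\UT(\tau|_{B_j})}_{\UL(\tau|_{B_j}, A|_{B_j})}$ evaluates $\psi'$ at the Levi part of $l$ in $\UL(\tau, B) = \UL(\tau, C) \ltimes Q'$, which is exactly $l_L$.

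The main obstacle is the structural claim in the second paragraph: the block-wise Levi--radical factorization of $\UL(\tau, B)$ with respect to $A$ must genuinely agree with the global one inside $\UT(\tau) = L \ltimes R$, and the analogous statement must hold for $\UR(\tau, B)$. Both reduce to bookkeeping with the $\EQ$ and $\ASC$ identities of Proposition~\ref{prop:refinementsets}. Once these are in place, everything downstream is Mackey-style averaging using the normality of $R$ in $\UT(\tau)$.
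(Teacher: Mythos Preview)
Your proposal is correct and follows essentially the same approach as the paper: both compute the two compositions explicitly, decompose $\UR(\tau,B)$ as $(\UR(\tau,B)\cap\UL(\tau,A))\cdot(\UR(\tau,B)\cap\UR(\tau,A))$ (your $Q\cdot(\UR(\tau,A)\cap\UR(\tau,B))$), and use the normality of $\UR(\tau,A)$ to show the Levi projection of $lu$ is $l_L u_E$, collapsing the sum. You are more explicit than the paper about why $l_L\in\UL(\tau,C)$ via the block-wise decomposition $\UL(\tau,B)=\UL(\tau,C)\ltimes Q'$, whereas the paper simply writes $g_L$ and proceeds; this extra care is justified and does not change the argument.
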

\begin{proof}
By Lemma~\ref{lem:compatibleparabolics} $\UT(\tau) = \UL(\tau, A) \rtimes \UR(\tau, A)$, so any element $g \in \UT(\tau)$ can be written $g = g_{L}g_{R}$ with $g_{L} \in \UL(\tau, A)$ and $g_{R} \in \UR(\tau, A)$.  Thus
\[
 \resf^{\UT(\tau)}_{\UL(\tau, B)} \circ \infl^{\UT(\tau)}_{\UL(\tau, A)} (\psi) (g) = \frac{1}{|\UR(\tau, B)|}  \sum_{x \in \UR(\tau, B)}  \psi\big( (gx)_{L} \big)
\]
for $g \in \UL(\tau, B)$ and $\psi \in \cf(\UL(\tau, A))$.  On the other hand, 
\[
 \left( \bigotimes_{j = 1}^{\ell(B)} \infl^{\UT(\tau|_{B_{i}})}_{\UL(\tau|_{B_{i}}, A|_{B_{i}})}\right) \circ \left( \bigotimes_{i = 1}^{\ell(A)} \resf^{\UT(\tau|_{A_{i}})}_{\UL(\tau|_{A_{i}}, B|_{A_{i}})} \right)  (\psi)(g) = {\textstyle \frac{1}{|\UR(\tau, B) \cap \UL(\tau, A)|}}  \hspace{-2em}  \sum_{y \in \UR(\tau, A) \cap \UL(\tau, A)} \hspace{-2em} \psi(g_{L} y)
\]
Finally, 
\[
\sum_{x \in \UR(\tau, B)}  \psi\big( (gx)_{L} \big) = |\UR(\tau, B) \cap \UR(\tau, A)| \hspace{-1.5em} \sum_{y \in \UR(\tau, B) \cap \UL(\tau, A)} \hspace{-1.5em} \psi(g_{L} y)
\]
and $|\UR(\tau, B)|/|\UR(\tau, B) \cap \UR(\tau, A)| = |\UR(\tau, B) \cap \UL(\tau, A)|$.
\end{proof}


The naturality axiom of $\cfUT$ follows from an equivariance under bijections in the construction of the resflation and inflation maps.

\begin{prop}
\label{prop:naturality}
Let $I$ and $J$ be finite sets, $\tau \in \TTT[I]$ a total order, and $\sigma:I \shortto J$ a bijection.  Fix a composition $A = (A_{1}, \ldots, A_{\ell}) \vDash I$ and let ${}^{\sigma}A$ and ${}^{\sigma} \tau$ denote the composition and total order of $J$ obtained by applying $\sigma$ to each part of $A$ and $\tau$, respectively.  
The diagram 
\[
\begin{tikzpicture}[baseline = -1.1cm]
\draw (0, -2) node[] (A) {$\cf(\UL(\tau, A))$};
\draw (0, 0) node[] (I) {$\cf(\UT(\tau))$};
\draw (7, -2) node[] (sA) {$\cf(\UL({}^{\sigma}\tau, {}^{\sigma}A))$};
\draw (7, 0) node[] (sI) {$\cf(\UT({}^{\sigma}\tau))$};
\draw[thick, <-] (A) -- node[left] {$\resf^{\UT(\tau)}_{\UL(\tau, A)}$} (I);
\draw[thick, ->] (A) -- node[above] {$(\sigma|_{A_{1}})_{\ast} \otimes \cdots \otimes (\sigma|_{A_{\ell}})_{\ast}$} (sA);
\draw[thick, ->] (I) -- node[above] {$(\sigma)_{\ast}$} (sI);
\draw[thick, <-] (sA) -- node[right] {$\resf^{\UT({}^{\sigma}\tau)}_{\UL({}^{\sigma}\tau, {}^{\sigma}A)}$} (sI);
\end{tikzpicture}
\]
commutes, and if $\ASC(A) \subseteq \tau$, the diagram
\[
\begin{tikzpicture}[baseline = -1.1cm]
\draw (0, 0) node[] (A) {$\cf(\UL(\tau, A))$};
\draw (0, -2) node[] (I) {$\cf(\UT(\tau))$};
\draw (7, 0) node[] (sA) {$\cf(\UL({}^{\sigma}\tau, {}^{\sigma}A))$};
\draw (7, -2) node[] (sI) {$\cf(\UT({}^{\sigma}\tau))$};
\draw[thick, ->] (A) -- node[left] {$\infl^{\UT(\tau)}_{\UL(\tau, A)}$} (I);
\draw[thick, ->] (A) -- node[above] {$(\sigma|_{A_{1}})_{\ast} \otimes \cdots \otimes (\sigma|_{A_{\ell}})_{\ast}$} (sA);
\draw[thick, ->] (I) -- node[above] {$(\sigma)_{\ast}$} (sI);
\draw[thick, ->] (sA) -- node[right] {$\infl^{\UT({}^{\sigma}\tau)}_{\UL({}^{\sigma}\tau, {}^{\sigma}A)}$} (sI);
\end{tikzpicture}
\]
also commutes.
\end{prop}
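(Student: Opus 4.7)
The plan is to reduce both diagrams to the elementary equivariance of the basic maps $\res$, $\defl$, and $\infl$ under group isomorphisms, combined with the functoriality of the pattern subgroup construction from Section~\ref{sec:matrixgroups}. The key preliminary observation is that the bijection $\sigma\colon I \to J$ induces a compatible family of group isomorphisms
\[
\UT(\tau) \cong \UT({}^{\sigma}\tau), \quad \UL(\tau,A) \cong \UL({}^{\sigma}\tau,{}^{\sigma}A), \quad \UR(\tau,A) \cong \UR({}^{\sigma}\tau,{}^{\sigma}A), \quad \UP(\tau,A) \cong \UP({}^{\sigma}\tau,{}^{\sigma}A),
\]
since each group is $\UT(\pi)$ for a partial order built from $\tau$ and $A$ by operations that commute with $\sigma$ (intersection with $\EQ(A)$ and $\ASC(A)$, which are themselves $\sigma$-equivariant set constructions). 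These isomorphisms interact correctly with the inclusions between the four subgroups and with the quotient $\UP(\tau,A)/\UR(\tau,A) \cong \UL(\tau,A)$.

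For the first diagram, factor $\resf^{\UT(\tau)}_{\UL(\tau,A)} = \defl^{\UP(\tau,A)}_{\UL(\tau,A)} \circ \res^{\UT(\tau)}_{\UP(\tau,A)}$ and handle each piece separately. Equivariance of restriction, $\res^{G'}_{K'} \circ \sigma_{\ast} = (\sigma|_{K})_{\ast} \circ \res^{G}_{K}$, is immediate from the definition of restriction as pullback along inclusion. The analogous statement for deflation follows from its averaging formula, using that $\sigma$ sends $\UR(\tau,A)$ bijectively onto $\UR({}^{\sigma}\tau,{}^{\sigma}A)$. Composing the two yields commutativity of the outer square, once one identifies $(\sigma)_{\ast}$ restricted to $\cf(\UL(\tau,A))$ with $(\sigma|_{A_{1}})_{\ast} \otimes \cdots \otimes (\sigma|_{A_{\ell}})_{\ast}$ under the decomposition~\eqref{eq:leviiso}; this in turn follows from the fact that ${}^{\sigma}$ commutes with restriction of partial orders, so $({}^{\sigma}\tau)|_{\sigma(A_{i})} = {}^{\sigma|_{A_{i}}}(\tau|_{A_{i}})$.

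For the second diagram, the assumption $\ASC(A) \subseteq \tau$ together with Proposition~\ref{lem:functorialsubgroupinteractions} gives $\UP(\tau,A) = \UT(\tau)$, so $\infl^{\UT(\tau)}_{\UL(\tau,A)}$ is pullback along the quotient $\UT(\tau) \twoheadrightarrow \UT(\tau)/\UR(\tau,A) \cong \UL(\tau,A)$, and the same compatibility argument applies verbatim. Alternatively, one may deduce it from the first diagram by adjunction: $\infl$ is adjoint to $\resf$ by the lemma preceding Proposition~\ref{prop:upwardtransitivity}, and $\sigma_{\ast}$ is unitary with respect to the standard inner product on class functions, so dualizing the first diagram yields the second.

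The main obstacle is simply the bookkeeping identification of $\sigma_{\ast}|_{\cf(\UL(\tau,A))}$ with the tensor product of the $(\sigma|_{A_{i}})_{\ast}$; everything else reduces to the routine fact that restriction, deflation, and inflation are each natural transformations with respect to group isomorphisms of source and target.
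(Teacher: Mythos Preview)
Your proposal is correct and follows essentially the same approach as the paper: the paper's proof consists of the single observation that ${}^{\sigma}\UL(\tau,A)=\UL({}^{\sigma}\tau,{}^{\sigma}A)$ and ${}^{\sigma}\UR(\tau,A)=\UR({}^{\sigma}\tau,{}^{\sigma}A)$, after which the commutativity is immediate. Your write-up is a more detailed unpacking of exactly this argument, with the additional (correct but unnecessary) alternative via adjunction for the second diagram.
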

\begin{proof}
Direct computation gives ${}^{\sigma}\UL(\tau, A) = \UL({}^{\sigma}\tau, {}^{\sigma}A)$ and ${}^{\sigma}\UR(\tau, A) = \UR({}^{\sigma}\tau, {}^{\sigma}A)$, so the proposed equality of maps holds.
\end{proof}


\section{From $\cfUT$ to $\cf(\UT_{\bullet})$}
\label{sec:Hopfalgebra}

This section establishes the titular Hopf algebra structure of class functions.  
This Hopf structure is the shadow of the monoid $\cfUT$ from Section~\ref{sec:cfmonoid}, and Section~\ref{sec:Fockfunctors} gives some technical background on this procedure.  
A mostly self-contained description of the Hopf algebra $\cf(\UT_{\bullet})$ and its structure maps is then given in Section~\ref{sec:cfalgebra}, and Theorem~\ref{thm:HopfAlgebra} verifies that the given maps satisfy the necessary axioms.

\subsection{The Fock functor $\overline{\calK}$}
\label{sec:Fockfunctors}

In~\cite[Chapter 15]{AgMahlong}, a functor $\overline{\calK}$ from $\CC$-vector species to graded $\CC$-vector spaces is defined.  This functor sends each connected Hopf monoid to a graded connected Hopf algebra in a manner described below.

Let $\mathbf{h}$ be a $\CC$-vector species.  For each $I \in \mathrm{set}^{\times}$, the symmetric group on $I$ is $\S{I} = \hom_{\mathrm{set}^{\times}}(I, I)$.  The group $\S{I}$ has a $\CC$-linear action on $\mathbf{h}[I]$ for each Hopf monoid $\mathbf{h}$, giving a space of \emph{$\S{I}$-covariants}
\[
(\mathbf{h}[I])_{\S{}} = \mathbf{h}[I] \big/ \{v - \mathbf{h}[\sigma](v) \;|\; \text{$\sigma \in \S{I}$, $v \in \mathbf{h}[I]$}\}.
\]
The graded vector space associated to $\mathbf{h}$ by the functor $\overline{\calK}$ is 
\[
\overline{\calK}(\mathbf{h}) = \bigoplus_{n \ge 0} (\mathbf{h}[n])_{\S{}},
\]
where $[n]$ is the set containing the first $n$ positive integers, with $[0] = \emptyset$.  

Now suppose that $\mathbf{h}$ is a connected Hopf monoid, let $n \ge 0$, and fix a subset $I \subseteq [n]$.  The \emph{canonicalization map} is the order-preserving bijection
\begin{equation}
\label{eq:canomapdef}
\begin{array}{rccc}
\cano_{I}: & I & \to & [|I|] \\
& r & \mapsto & \#\{ s \in I \;|\; s \le r\}.
\end{array}
\end{equation}
By the functoriality of $\mathbf{h}$, each $\mathbf{h}[\cano_{I}]$ induces an isomorphism between covariant spaces.  The \emph{straightening map} is the isomorphism
\[
\st_{(I, I^{c})} = \mathbf{h}[\cano_{I}] \otimes \mathbf{h}[\cano_{I^{c}}] : (\mathbf{h}[I])_{\S{}} \otimes (\mathbf{h}[I^{c}])_{\S{}} \to (\mathbf{h}[i])_{\S{}} \otimes (\mathbf{h}[n - i])_{\S{}} 
\]
where $i = |I|$ and $I^{c} = [n] \setminus I$.  The naturality of $\mathbf{h}$ also ensures that the product and coproduct of $\mathbf{h}$ also induce maps between $\S{}$-covariants
\[
(\mathbf{h}[I])_{\S{}} \otimes (\mathbf{h}[I^{c}])_{\S{}} \xrightarrow{\;\mu_{(I, I^{c})}\;} (\mathbf{h}[n])_{\S{}}
\qquad \text{and} \qquad
(\mathbf{h}[n])_{\S{}} \xrightarrow{\;\Delta_{(I, I^{c})}\;} (\mathbf{h}[I])_{\S{}} \otimes (\mathbf{h}[I^{c}])_{\S{}}
\]
where $\mu_{\emptyset, [n]}$, $\Delta_{\emptyset, [n]}$, $\mu_{[n], \emptyset}$, and $\Delta_{[n], \emptyset}$ denote the canonical identifications 
\[
\CC \otimes  \mathbf{h}[n]_{\S{}} \cong  \mathbf{h}[n]_{\S{}} \cong  \mathbf{h}[n]_{\S{}} \otimes \CC.
\]

\begin{thm}[{\cite[Theorem 15.12]{AgMahlong}}]
\label{thm:FockFunctor}
Let $\mathbf{h}$ be a connected $\CC$-Hopf monoid over.  Then $\overline{\calK}(\mathbf{h})$ is a graded connected $\CC$-Hopf algebra, with product and coproduct given by
\[
\mu = \bigoplus_{n \ge i \ge 0} \; \mu_{([i], [i]^{c})} \circ \st_{([i], [i]^{c})}^{-1}
\qquad \text{and} \qquad
\Delta = \bigoplus_{n \ge 0} \; \sum_{I \subseteq [n]} \; \st_{(I, I^{c})} \circ \Delta_{(I, I^{c})},
\]
and unit, counit, and antipode determined implicitly by connectedness.
\end{thm}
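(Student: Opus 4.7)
The plan is to derive each axiom of a graded connected Hopf algebra on $\overline{\calK}(\mathbf{h})$ from the corresponding Hopf monoid axiom for $\mathbf{h}$, with the canonicalization maps~\eqref{eq:canomapdef} and induced straightening isomorphisms $\st_{(I, I^c)}$ serving as bookkeeping that interconverts set-indexed and integer-indexed structures. Before proceeding, I would verify well-definedness on covariants: the naturality axiom applied to any $\sigma \in \S{[n]}$ stabilizing the composition $([i], [i]^c)$ setwise shows that $\mu_{([i], [i]^c)}$ descends to a map between $\S{}$-covariant quotients, and similarly for $\Delta_{(I, I^c)}$. The straightening map $\st_{(I, I^c)}$ is itself well-defined on covariants because any two order-preserving bijections $I \shortto [|I|]$ differ by an element of $\S{}$, which acts trivially after taking covariants.

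Associativity and coassociativity reduce directly to their Hopf monoid counterparts. For associativity applied to $x \otimes y \otimes z$ of degrees $(i, j, k)$ with $i+j+k = n$, I would invoke the associativity diagram of $\mathbf{h}$ with the refinement $B = ([i], \{i+1, \ldots, i+j\}, \{i+j+1, \ldots, n\})$ of both $A = ([i+j], \{i+j+1, \ldots, n\})$ and $A' = ([i], \{i+1, \ldots, n\})$: both bracketings of the triple product descend from the same iterated map $\mu_B$ on the species side. Coassociativity is the symmetric argument using the coassociativity diagram. The unit and counit are implicit from connectedness via the identification $(\mathbf{h}[\emptyset])_{\S{}} = \CC$.

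The main obstacle is bialgebra compatibility. To establish $\Delta \circ \mu = (\mu \otimes \mu) \circ \sigma_{23} \circ (\Delta \otimes \Delta)$, where $\sigma_{23}$ swaps the two middle tensor factors, I would expand $\Delta(\mu(x \otimes y))$ as the sum over $J \subseteq [n]$ of $\st_{(J, J^c)} \circ \Delta_{(J, J^c)} \circ \mu_{([i], [i]^c)} \circ \st^{-1}_{([i], [i]^c)}(x \otimes y)$. Applying the compatibility axiom of $\mathbf{h}$ to $A = ([i], [i]^c)$ and $B = (J, J^c)$, the Tits product $A \titsprod B$ has four parts $J \cap [i]$, $J^c \cap [i]$, $J \cap [i]^c$, $J^c \cap [i]^c$, and the compatibility diagram rewrites $\Delta_B \circ \mu_A$ as the composition of $\Delta$'s on the two factors of $A$, an isomorphism permuting the two middle tensor slots, and $\mu$'s recombining according to the parts of $B$. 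The sum over $J \subseteq [n]$ then factors canonically as the double sum over $K := J \cap [i] \subseteq [i]$ and $L := J \cap [i]^c \subseteq [i]^c$, which matches exactly the multiplicative expansion of the right-hand side. The delicate step is checking that the canonicalization maps coming from $\st_{(J, J^c)}$ assemble correctly out of those from $\st_{(K, [i] \setminus K)}$, $\st_{(L, [i]^c \setminus L)}$, and $\st^{-1}_{([i], [i]^c)}$; this follows from the transitivity of canonicalization under restriction together with naturality.

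Finally, the antipode exists automatically for any graded connected bialgebra by the standard Takeuchi recursion $S(1) = 1$ and $S(x) = -x - \sum S(x_{(1)}) x_{(2)}$ over the positive-degree components of $\Delta(x)$, and the unit and counit are the structure maps forced by the canonical identifications in degree zero.
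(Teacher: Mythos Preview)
The paper does not prove this theorem: it is quoted verbatim as \cite[Theorem 15.12]{AgMahlong} and used as a black box, so there is no proof in the paper to compare your attempt against. Your sketch is the standard route taken in the cited reference---deduce each Hopf algebra axiom from the matching Hopf monoid axiom, with naturality handling the passage to covariants and the Tits-product compatibility diagram giving the bialgebra law---so in that sense you are aligned with the literature, even if the present paper omits the argument entirely.
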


\begin{rem}
For the Hopf monoids in this paper, applying $\overline{\calK}$ is akin to ``unlabeling.''  Up to isomorphism, $\overline{\calK}(\mathbf{po})$ is the Hopf algebra of unlabelled posets and---as there is only one unlabelled total order of each size---$\overline{\calK}(\mathbf{t})$ is the standard Hopf algebra structure on $\CC[x]$.
\end{rem}

\subsection{The Hopf algebra of class functions}
\label{sec:cfalgebra}

Let $\UT_{n}$ denote the usual group of $n \times n$ unipotent upper triangular matrices over $\FF_{q}$, so that $\UT_{n} = \UT(1 < \cdots < n)$ in the notation of Section~\ref{sec:repprelims}.  
This section describes a Hopf algebra structure on the space
\[
\cf(\UT_{\bullet}) = \bigoplus_{n \ge 0} \cf(\UT_{n}).
\]
Logically, the results below rely heavily on those in Section~\ref{sec:cfmonoid}.  However, this section is written to communicate the construction with minimal reliance on earlier sections.  

For $n \ge 0$ and each subset $I \subseteq [n]$, the group $\UT_{n}$ has subgroups
\begin{align*}
\textbf{Levi:} && \UL_{I} &= \{g \in \UT_{n} \;|\; \text{$(g - 1_{n})_{i, j} \neq 0$ only if $(i, j) \in I \times I$ or $(i, j) \in I^{c} \times I^{c}$} \} \\[1em]
\textbf{radical:} && \UR_{I} &= \{g \in \UT_{n} \;|\; \text{$(g - 1_{n})_{i, j} \neq 0$ only if $(i, j) \in I \times I^{c}$} \},\,\text{and} \\[1em]
\textbf{parabolic:} && \UP_{I} &= \{g \in \UT_{n} \;|\; \text{$(g - 1_{n})_{i, j} = 0$ whenever $(i, j) \in I^{c} \times I$} \}.
\end{align*}
The above subgroups are the same as those defined in Section~\ref{sec:CFalgprelims} using the more cumbersome notation $\UL(1 < \cdots < n, (I, I^{c})^{\sharp})$, $\UR(1 < \cdots < n, (I, I^{c})^{\sharp})$, and $\UP(1 < \cdots < n, (I, I^{c})^{\sharp})$.

\begin{ex}
\label{ex:functorialsubgroups2}
The subgroups $\UL_{I}$, $\UR_{I}$, and $\UP_{I}$ can be constructed in a combinatorial manner laid out in Section~\ref{sec:CFalgprelims}.  For example, with $n = 4$:
\[
\UL_{[2]} = 
\begin{tikzpicture}[scale = 0.45, baseline = 0.45*-2.2cm]
\draw (0.2, 0.085) -- (-0.15, 0.085);
\draw[very thick] (-0.1, 0.1) -- (-0.1, -4.1);
\draw (-0.15, -4.085) -- (0.2, -4.085);
\draw (3.8, 0.085) -- (4.15, 0.085);
\draw[very thick] (4.1, 0.1) -- (4.1, -4.1);
\draw (4.15, -4.085) -- (3.8, -4.085);
\foreach \x in {1, 2, 3, 4}{\foreach \y in {1, 2}{\path[fill = gray!25] (\x - 1, -\y + 1) -- (\x, -\y + 1) -- (\x, -\y) -- (\x - 1, -\y) -- cycle;}}
\foreach \x in {3, 4}{\foreach \y in {3, 4}{\path[fill = gray!80] (\x - 1, -\y + 1) -- (\x, -\y + 1) -- (\x, -\y) -- (\x - 1, -\y) -- cycle;}}
\foreach \x in {3, 4}{\foreach \y in {1, 2}{\path[pattern = north east lines, pattern color = gray] (\x - 1, -\y + 1) -- (\x, -\y + 1) -- (\x, -\y) -- (\x - 1, -\y) -- cycle;}}
\foreach \x in {1, 2, 3, 4}{\draw (\x - 0.5, -\x + 0.5) node {$1$};}
\foreach \x in {1}{\foreach \y in {2, 3, 4}{\draw (\x - 0.5, -\y + 0.5) node {$0$};}}
\foreach \x in {2}{\foreach \y in {3, 4}{\draw (\x - 0.5, -\y + 0.5) node {$0$};}}
\foreach \x in {3}{\foreach \y in {4}{\draw (\x - 0.5, -\y + 0.5) node {$0$};}}
\foreach \x in {2}{\foreach \y in {1}{\draw (\x - 0.5, -\y + 0.5) node {$\ast$};}}
\foreach \x in {4}{\foreach \y in {3}{\draw (\x - 0.5, -\y + 0.5) node {$\ast$};}}
\foreach \x in {3, 4}{\foreach \y in {1}{\draw (\x - 0.5, -\y + 0.5) node {$0$};}}
\foreach \x in {3, 4}{\foreach \y in {2}{\draw (\x - 0.5, -\y + 0.5) node {$0$};}}
\end{tikzpicture},
\quad \UR_{[2]} = 
\begin{tikzpicture}[scale = 0.45, baseline = 0.45*-2.2cm]
\draw (0.2, 0.085) -- (-0.15, 0.085);
\draw[very thick] (-0.1, 0.1) -- (-0.1, -4.1);
\draw (-0.15, -4.085) -- (0.2, -4.085);
\draw (3.8, 0.085) -- (4.15, 0.085);
\draw[very thick] (4.1, 0.1) -- (4.1, -4.1);
\draw (4.15, -4.085) -- (3.8, -4.085);
\foreach \x in {1, 2, 3, 4}{\foreach \y in {1, 2}{\path[fill = gray!25] (\x - 1, -\y + 1) -- (\x, -\y + 1) -- (\x, -\y) -- (\x - 1, -\y) -- cycle;}}
\foreach \x in {3, 4}{\foreach \y in {3, 4}{\path[fill = gray!80] (\x - 1, -\y + 1) -- (\x, -\y + 1) -- (\x, -\y) -- (\x - 1, -\y) -- cycle;}}
\foreach \x in {3, 4}{\foreach \y in {1, 2}{\path[pattern = north east lines, pattern color = gray] (\x - 1, -\y + 1) -- (\x, -\y + 1) -- (\x, -\y) -- (\x - 1, -\y) -- cycle;}}
\foreach \x in {1, 2, 3, 4}{\draw (\x - 0.5, -\x + 0.5) node {$1$};}
\foreach \x in {1}{\foreach \y in {2, 3, 4}{\draw (\x - 0.5, -\y + 0.5) node {$0$};}}
\foreach \x in {2}{\foreach \y in {3, 4}{\draw (\x - 0.5, -\y + 0.5) node {$0$};}}
\foreach \x in {3}{\foreach \y in {4}{\draw (\x - 0.5, -\y + 0.5) node {$0$};}}
\foreach \x in {2}{\foreach \y in {1}{\draw (\x - 0.5, -\y + 0.5) node {$0$};}}
\foreach \x in {4}{\foreach \y in {3}{\draw (\x - 0.5, -\y + 0.5) node {$0$};}}
\foreach \x in {3, 4}{\foreach \y in {1}{\draw (\x - 0.5, -\y + 0.5) node {$\ast$};}}
\foreach \x in {3, 4}{\foreach \y in {2}{\draw (\x - 0.5, -\y + 0.5) node {$\ast$};}}
\end{tikzpicture},
\quad \text{and} \quad \UP_{[2]} = 
\begin{tikzpicture}[scale = 0.45, baseline = 0.45*-2.2cm]
\draw (0.2, 0.085) -- (-0.15, 0.085);
\draw[very thick] (-0.1, 0.1) -- (-0.1, -4.1);
\draw (-0.15, -4.085) -- (0.2, -4.085);
\draw (3.8, 0.085) -- (4.15, 0.085);
\draw[very thick] (4.1, 0.1) -- (4.1, -4.1);
\draw (4.15, -4.085) -- (3.8, -4.085);
\foreach \x in {1, 2, 3, 4}{\foreach \y in {1, 2}{\path[fill = gray!25] (\x - 1, -\y + 1) -- (\x, -\y + 1) -- (\x, -\y) -- (\x - 1, -\y) -- cycle;}}
\foreach \x in {3, 4}{\foreach \y in {3, 4}{\path[fill = gray!80] (\x - 1, -\y + 1) -- (\x, -\y + 1) -- (\x, -\y) -- (\x - 1, -\y) -- cycle;}}
\foreach \x in {3, 4}{\foreach \y in {1, 2}{\path[pattern = north east lines, pattern color = gray] (\x - 1, -\y + 1) -- (\x, -\y + 1) -- (\x, -\y) -- (\x - 1, -\y) -- cycle;}}
\foreach \x in {1, 2, 3, 4}{\draw (\x - 0.5, -\x + 0.5) node {$1$};}
\foreach \x in {1}{\foreach \y in {2, 3, 4}{\draw (\x - 0.5, -\y + 0.5) node {$0$};}}
\foreach \x in {2}{\foreach \y in {3, 4}{\draw (\x - 0.5, -\y + 0.5) node {$0$};}}
\foreach \x in {3}{\foreach \y in {4}{\draw (\x - 0.5, -\y + 0.5) node {$0$};}}
\foreach \x in {2}{\foreach \y in {1}{\draw (\x - 0.5, -\y + 0.5) node {$\ast$};}}
\foreach \x in {4}{\foreach \y in {3}{\draw (\x - 0.5, -\y + 0.5) node {$\ast$};}}
\foreach \x in {3, 4}{\foreach \y in {1}{\draw (\x - 0.5, -\y + 0.5) node {$\ast$};}}
\foreach \x in {3, 4}{\foreach \y in {2}{\draw (\x - 0.5, -\y + 0.5) node {$\ast$};}}
\end{tikzpicture}
\]
and
\[
\UL_{\{2, 4\}} = 
\begin{tikzpicture}[scale = 0.45, baseline = 0.45*-2.2cm]
\draw (0.2, 0.085) -- (-0.15, 0.085);
\draw[very thick] (-0.1, 0.1) -- (-0.1, -4.1);
\draw (-0.15, -4.085) -- (0.2, -4.085);
\draw (3.8, 0.085) -- (4.15, 0.085);
\draw[very thick] (4.1, 0.1) -- (4.1, -4.1);
\draw (4.15, -4.085) -- (3.8, -4.085);
\foreach \x in {1, 2, 3, 4}{\foreach \y in {2, 4}{\path[fill = gray!25] (\x - 1, -\y + 1) -- (\x, -\y + 1) -- (\x, -\y) -- (\x - 1, -\y) -- cycle;}}
\foreach \x in {1, 3}{\foreach \y in {1, 3}{\path[fill = gray!80] (\x - 1, -\y + 1) -- (\x, -\y + 1) -- (\x, -\y) -- (\x - 1, -\y) -- cycle;}}
\foreach \x in {1, 3}{\foreach \y in {2, 4}{\path[pattern = north east lines, pattern color = gray] (\x - 1, -\y + 1) -- (\x, -\y + 1) -- (\x, -\y) -- (\x - 1, -\y) -- cycle;}}
\foreach \x in {1, 2, 3, 4}{\draw (\x - 0.5, -\x + 0.5) node {$1$};}
\foreach \x in {1}{\foreach \y in {2, 3, 4}{\draw (\x - 0.5, -\y + 0.5) node {$0$};}}
\foreach \x in {2}{\foreach \y in {3, 4}{\draw (\x - 0.5, -\y + 0.5) node {$0$};}}
\foreach \x in {3}{\foreach \y in {4}{\draw (\x - 0.5, -\y + 0.5) node {$0$};}}
\foreach \x in {3}{\foreach \y in {1}{\draw (\x - 0.5, -\y + 0.5) node {$\ast$};}}
\foreach \x in {4}{\foreach \y in {2}{\draw (\x - 0.5, -\y + 0.5) node {$\ast$};}}
\foreach \x in {3}{\foreach \y in {2}{\draw (\x - 0.5, -\y + 0.5) node {$0$};}}
\foreach \x in {2}{\foreach \y in {1}{\draw (\x - 0.5, -\y + 0.5) node {$0$};}}
\foreach \x in {4}{\foreach \y in {1, 3}{\draw (\x - 0.5, -\y + 0.5) node {$0$};}}
\end{tikzpicture},
\quad \UR_{\{2, 4\}} = 
\begin{tikzpicture}[scale = 0.45, baseline = 0.45*-2.2cm]
\draw (0.2, 0.085) -- (-0.15, 0.085);
\draw[very thick] (-0.1, 0.1) -- (-0.1, -4.1);
\draw (-0.15, -4.085) -- (0.2, -4.085);
\draw (3.8, 0.085) -- (4.15, 0.085);
\draw[very thick] (4.1, 0.1) -- (4.1, -4.1);
\draw (4.15, -4.085) -- (3.8, -4.085);
\foreach \x in {1, 2, 3, 4}{\foreach \y in {2, 4}{\path[fill = gray!25] (\x - 1, -\y + 1) -- (\x, -\y + 1) -- (\x, -\y) -- (\x - 1, -\y) -- cycle;}}
\foreach \x in {1, 3}{\foreach \y in {1, 3}{\path[fill = gray!80] (\x - 1, -\y + 1) -- (\x, -\y + 1) -- (\x, -\y) -- (\x - 1, -\y) -- cycle;}}
\foreach \x in {1, 3}{\foreach \y in {2, 4}{\path[pattern = north east lines, pattern color = gray] (\x - 1, -\y + 1) -- (\x, -\y + 1) -- (\x, -\y) -- (\x - 1, -\y) -- cycle;}}
\foreach \x in {1, 2, 3, 4}{\draw (\x - 0.5, -\x + 0.5) node {$1$};}
\foreach \x in {1}{\foreach \y in {2, 3, 4}{\draw (\x - 0.5, -\y + 0.5) node {$0$};}}
\foreach \x in {2}{\foreach \y in {3, 4}{\draw (\x - 0.5, -\y + 0.5) node {$0$};}}
\foreach \x in {3}{\foreach \y in {4}{\draw (\x - 0.5, -\y + 0.5) node {$0$};}}
\foreach \x in {3}{\foreach \y in {1}{\draw (\x - 0.5, -\y + 0.5) node {$0$};}}
\foreach \x in {4}{\foreach \y in {2}{\draw (\x - 0.5, -\y + 0.5) node {$0$};}}
\foreach \x in {3}{\foreach \y in {2}{\draw (\x - 0.5, -\y + 0.5) node {$\ast$};}}
\foreach \x in {2}{\foreach \y in {1}{\draw (\x - 0.5, -\y + 0.5) node {$0$};}}
\foreach \x in {4}{\foreach \y in {1, 3}{\draw (\x - 0.5, -\y + 0.5) node {$0$};}}
\end{tikzpicture},
\quad \text{and} \quad \UP_{\{2, 4\}} = 
\begin{tikzpicture}[scale = 0.45, baseline = 0.45*-2.2cm]
\draw (0.2, 0.085) -- (-0.15, 0.085);
\draw[very thick] (-0.1, 0.1) -- (-0.1, -4.1);
\draw (-0.15, -4.085) -- (0.2, -4.085);
\draw (3.8, 0.085) -- (4.15, 0.085);
\draw[very thick] (4.1, 0.1) -- (4.1, -4.1);
\draw (4.15, -4.085) -- (3.8, -4.085);
\foreach \x in {1, 2, 3, 4}{\foreach \y in {2, 4}{\path[fill = gray!25] (\x - 1, -\y + 1) -- (\x, -\y + 1) -- (\x, -\y) -- (\x - 1, -\y) -- cycle;}}
\foreach \x in {1, 3}{\foreach \y in {1, 3}{\path[fill = gray!80] (\x - 1, -\y + 1) -- (\x, -\y + 1) -- (\x, -\y) -- (\x - 1, -\y) -- cycle;}}
\foreach \x in {1, 3}{\foreach \y in {2, 4}{\path[pattern = north east lines, pattern color = gray] (\x - 1, -\y + 1) -- (\x, -\y + 1) -- (\x, -\y) -- (\x - 1, -\y) -- cycle;}}
\foreach \x in {1, 2, 3, 4}{\draw (\x - 0.5, -\x + 0.5) node {$1$};}
\foreach \x in {1}{\foreach \y in {2, 3, 4}{\draw (\x - 0.5, -\y + 0.5) node {$0$};}}
\foreach \x in {2}{\foreach \y in {3, 4}{\draw (\x - 0.5, -\y + 0.5) node {$0$};}}
\foreach \x in {3}{\foreach \y in {4}{\draw (\x - 0.5, -\y + 0.5) node {$0$};}}
\foreach \x in {4}{\foreach \y in {3}{\draw (\x - 0.5, -\y + 0.5) node {$0$};}}
\foreach \x in {3}{\foreach \y in {1}{\draw (\x - 0.5, -\y + 0.5) node {$\ast$};}}
\foreach \x in {4}{\foreach \y in {2}{\draw (\x - 0.5, -\y + 0.5) node {$\ast$};}}
\foreach \x in {3}{\foreach \y in {2}{\draw (\x - 0.5, -\y + 0.5) node {$\ast$};}}
\foreach \x in {2}{\foreach \y in {1}{\draw (\x - 0.5, -\y + 0.5) node {$0$};}}
\foreach \x in {4}{\foreach \y in {1, 3}{\draw (\x - 0.5, -\y + 0.5) node {$0$};}}
%
\end{tikzpicture}.
\]
\end{ex}

\begin{rem}
There is some ambiguity in the notation above, as $I \subseteq [n]$ is also a subset of $[n+1]$, $[n+2]$, and so on.  Thus, the reader should take the phrase ``$I \subseteq [n]$'' to imply that all subgroups defined by the subset $I$ are of $\UT_{n}$.
\end{rem}

In the updated notation of this section, Proposition~\ref{lem:functorialsubgroupinteractions} states that for $n \ge 0$ and $I \subseteq [n]$, 
\[
\UP_{I} = \UL_{I} \rtimes \UR_{I}.
\]
Further, Equation~\eqref{eq:leviiso} gives an isomorphism of groups $\UL_{I} \cong \UT_{|I|} \times \UT_{|I^{c}|}$ which descends to a straightening isomorphism 
\begin{equation}
\label{eq:stmapnice}
\st_{(I, I^{c})}: \cf(\UL_{I}) \to \cf( \UT_{|I|} ) \otimes \cf(\UT_{|I^{c}|}), 
\end{equation}
as defined in Section~\ref{sec:Fockfunctors}.

\begin{thm}
\label{thm:HopfAlgebra}
The space $\cf(\UT_{\bullet})$ is a graded connected Hopf algebra with the product and coproduct maps
\[
\mu = \bigoplus_{n \ge i \ge 0} \infl^{\UT_{n}}_{\UL_{[i]}} \circ  \st_{([i], [i]^{c})}^{-1}
\qquad\text{and}\qquad
\Delta = \bigoplus_{n \ge 0} \; \sum_{I \subseteq [n]} \; \st_{(I, I^{c})} \circ \defl^{\UP_{I}}_{\UL_{I}} \circ \res^{\UT_{n}}_{\UP_{I}}.
\]
Moreover, there is a graded isomorphism $\cf(\UT_{\bullet}) \cong \overline{\calK}(\cfUT)$ of Hopf algebras.
\end{thm}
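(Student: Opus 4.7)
The plan is to deduce this theorem directly from Theorem~\ref{thm:CFhopfmonoid} and Theorem~\ref{thm:FockFunctor}. The latter already guarantees that $\overline{\calK}(\cfUT)$ is a graded connected Hopf algebra with structure maps given by the Fock-functor formulas involving $\mu_A$, $\Delta_A$, and the straightening isomorphisms. What remains is to identify $\cf(\UT_{\bullet})$ with $\overline{\calK}(\cfUT)$ as graded vector spaces and verify that, under this identification, the Fock-functor formulas become the displayed maps $\mu$ and $\Delta$.

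First, I would establish the vector space isomorphism. For each $n \ge 0$, the symmetric group $\S{n}$ acts transitively on $\TTT[n]$ with trivial stabilizer (the only order-preserving permutation is the identity), and its action on $\cfUT[n] = \bigoplus_{\tau \in \TTT[n]} \cf(\UT(\tau))$ permutes the $\tau$-summands accordingly via the naturality maps $\cfUT[\sigma]$. It follows that the inclusion of the summand $\cf(\UT_n) = \cf(\UT(1 < \cdots < n))$ into $\cfUT[n]$ induces an isomorphism $\cf(\UT_n) \xrightarrow{\cong} (\cfUT[n])_{\S{n}}$. Summing over $n$ yields the graded identification $\cf(\UT_{\bullet}) \cong \overline{\calK}(\cfUT)$. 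Under this identification, the Fock-functor straightening map $\st_{(I, I^{c})}$ from Section~\ref{sec:Fockfunctors} corresponds precisely to the isomorphism of Equation~\eqref{eq:stmapnice}, because $\cano_{I}$ is the unique order-preserving bijection $I \to [|I|]$.

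Next, I would verify the product formula. For the set composition $A = ([i], [i]^{c})$ of $[n]$, the ordinal sum of the canonical total orders $1 < \cdots < i$ on $[i]$ and $i+1 < \cdots < n$ on $[i]^{c}$ is precisely the canonical order $1 < \cdots < n$; the Levi subgroup $\UL(1 < \cdots < n, A)$ agrees with $\UL_{[i]}$ in the notation of this section. On this canonical summand, the Hopf-monoid product $\mu_{A}$ of $\cfUT$ is the inflation $\infl^{\UT_{n}}_{\UL_{[i]}}$ by the definition in Section~\ref{sec:upmaps}, so composing with $\st^{-1}_{([i], [i]^{c})}$ recovers the displayed product. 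Analogously, for the coproduct, note that for any $I \subseteq [n]$ the restriction of $1 < \cdots < n$ to $I$, after applying $\cano_{I}$, is the canonical total order $1 < \cdots < |I|$. Thus $\Delta_{(I, I^{c})}$ applied to the canonical summand $\cf(\UT_{n})$ is exactly the map $\defl^{\UP_{I}}_{\UL_{I}} \circ \res^{\UT_{n}}_{\UP_{I}}$ from Section~\ref{sec:downmaps}, and composing with $\st_{(I, I^{c})}$ yields the displayed coproduct.

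The main obstacle is bookkeeping: making the translation between the abstract Hopf-monoid language of $\cfUT$ and the concrete subgroups $\UT_{n}$, $\UL_{I}$, $\UR_{I}$, $\UP_{I}$ explicit, and verifying that the coinvariants functor and the straightening isomorphisms interact with restriction, deflation, and inflation in the expected way. Once these identifications are carefully set up, no new computation is needed beyond applying Theorem~\ref{thm:FockFunctor}; the unit, counit, and antipode are then determined implicitly by connectedness.
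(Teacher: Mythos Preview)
Your proposal is correct and follows essentially the same approach as the paper: both use the free transitive action of $\S{[n]}$ on $\TTT[n]$ to identify $\cf(\UT_n)$ with the covariants $(\cfUT[n])_{\S{}}$, and then verify directly that the Fock-functor structure maps on the canonical summand coincide with the displayed inflation and resflation formulas. The paper's proof is slightly more terse in the verification step, but the strategy and all substantive ideas are the same.
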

\begin{proof}
The proof will exhbit an isomorphism $\cf(\UT_{\bullet}) \cong \overline{\calK}(\cfUT)$.  For $n \ge 0$, the symmetric group $\S{[n]}$ acts freely and transitively on the set $\TTT[n]$ of total orders.  Thus 
\[
\cfUT[n] = \bigoplus_{\sigma \in \S{[n]}} \cfUT[\sigma]\left( \cf(\UT_{n}) \right),
\]
and consequently, inclusion $\cf(\UT_{n}) \hookrightarrow \cfUT[n]$ gives rise to a $\CC$-linear bijection
\[
\begin{array}{rcl}
q_{n}\colon \cf(\UT_{n}) & \to & \cfUT[n]_{\S{}} \\
\psi & \mapsto & \psi +  \{\chi - \cfUT[\sigma](\chi) \;|\; \text{$\sigma \in \S{[n]}$, $\chi \in \cfUT[n]$}\}.
\end{array}
\]
Thus, $\cf(\UT_{\bullet})$ and $\overline{\calK}(\cfUT)$ are isomorphic as graded vector spaces.  Furthermore, direct computation shows that for each $n \ge i \ge 0$,
\[
\mu_{([i], [i]^{c})} \circ \st_{([i], [i]^{c})}^{-1} \circ (q_{i} \otimes q_{n-i}) 
=
q_{n} \circ \infl^{\UT_{n}}_{\UL_{[i]}} \circ  \st_{([i], [i]^{c})}^{-1},
\]
and for each subset $I \subseteq [n]$, 
\[
\st_{(I, I^{c})} \circ \Delta_{(I, I^{c})} \circ q_{n} 
=
(q_{|I|} \otimes q_{|I^{c}|}) \circ \st_{(I, I^{c})} \circ \defl^{\UP_{I}}_{\UL_{I}} \circ \res^{\UT_{n}}_{\UP_{I}} ,
\]
making the map $\bigoplus_{n \ge 0} q_{n}$ a Hopf algebra isomorphism.
\end{proof}

\section{Induction to the general linear group}
\label{sec:GL}

For $n \ge 0$, let $\GL_{n} = \GL([n])$, the usual $n \times n$ general linear group over $\FF_{q}$.  Zelevinsky~\cite{Zel} defined a graded connected Hopf algebra structure on
\[
\cf(\GL_{\bullet}) = \bigoplus_{n \ge 0} \cf(\GL_{n}),
\]
the details of which are reviewed in Section~\ref{sec:GLalg}.  Recalling the content of Section~\ref{sec:Hopfalgebra}, $\UT_{n}$ is a subgroup of $\GL_{n}$ so that induction gives a graded linear map
\[
\ind^{\GL}_{\UT} = \bigoplus_{n \ge 0} \ind^{\GL_{n}}_{\UT_{n}} : \cf(\UT_{\bullet}) \to \cf(\GL_{\bullet}).
\]
The purpose of this section is to investigate the properties of this map.  First and foremost, Section~\ref{sec:inductionhom} proves the following result.

\begin{thm}
\label{thm:inductionhomomorphism}
The map $\ind^{\GL}_{\UT}$ is a graded Hopf algebra homomorphism.
\end{thm}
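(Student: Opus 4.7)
The plan is to verify compatibility with grading, product, and coproduct separately. Grading is immediate since $\ind^{\GL_n}_{\UT_n}$ sends $\cf(\UT_n)$ to $\cf(\GL_n)$. The structural observation linking the two Hopf algebras, used throughout, is that $\UR_{[i]}$ coincides with the unipotent radical $U_{[i]}$ of the standard parabolic subgroup $P_{[i]} = L_{[i]} \ltimes U_{[i]}$ of $\GL_n$, whose Levi is $L_{[i]} \cong \GL_i \times \GL_{n-i}$. Consequently $\UT_n \subseteq P_{[i]}$, and the projection $P_{[i]} \twoheadrightarrow L_{[i]}$ restricts to the quotient map $\UT_n \twoheadrightarrow \UL_{[i]} = \UT_i \times \UT_{n-i} \subseteq L_{[i]}$ with kernel $\UR_{[i]} = U_{[i]}$.

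For multiplicativity, use the description of the product in $\cf(\GL_\bullet)$ from Section~\ref{sec:GLalg} as $\ind^{\GL_n}_{P_{[i]}} \circ \infl^{P_{[i]}}_{L_{[i]}}$, together with the transitivity $\ind^{\GL_n}_{\UT_n} = \ind^{\GL_n}_{P_{[i]}} \circ \ind^{P_{[i]}}_{\UT_n}$, to reduce the claim to the identity
\[
\ind^{P_{[i]}}_{\UT_n} \circ \infl^{\UT_n}_{\UL_{[i]}} = \infl^{P_{[i]}}_{L_{[i]}} \circ \ind^{L_{[i]}}_{\UL_{[i]}}.
\]
This is a short direct computation from the quotient compatibility above: any function inflated from $\UL_{[i]}$ is constant on right $\UR_{[i]}$-cosets, hence factors through $\UT_n / \UR_{[i]} = \UL_{[i]}$, after which the two sides compute the same sum over cosets of $\UT_n$ in $P_{[i]}$, which correspond bijectively to cosets of $\UL_{[i]}$ in $L_{[i]}$.

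For comultiplicativity, the plan is to apply Mackey's formula to expand
\[
\res^{\GL_n}_{P_{[i]}} \circ \ind^{\GL_n}_{\UT_n}(\psi) = \sum_{x \in P_{[i]} \backslash \GL_n / \UT_n} \ind^{P_{[i]}}_{P_{[i]} \cap {}^x\UT_n} \circ \res^{{}^x\UT_n}_{P_{[i]} \cap {}^x\UT_n}({}^x\psi),
\]
then postcompose with $\defl^{P_{[i]}}_{L_{[i]}}$ and the straightening $\st_{([i],[i]^c)}$ to recover the degree-$(i, n-i)$ component of $\Delta_{\GL} \circ \ind^{\GL}_{\UT}(\psi)$. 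The refined Bruhat decomposition $\GL_n = \bigsqcup_{w \in S_n} BwB$ combined with $\UT_n \subseteq B \subseteq P_{[i]}$ identifies $P_{[i]} \backslash \GL_n / \UT_n$ with cosets of the Weyl group $S_i \times S_{n-i}$ of $L_{[i]}$ in $S_n$, which are in natural bijection with size-$i$ subsets $I \subseteq [n]$ via $w \mapsto I = w^{-1}([i])$. A direct verification then identifies the $w$-term, after deflation and straightening, with
\[
\bigl(\ind^{\GL_i}_{\UT_i} \otimes \ind^{\GL_{n-i}}_{\UT_{n-i}}\bigr) \circ \st_{(I, I^c)} \circ \defl^{\UP_I}_{\UL_I} \circ \res^{\UT_n}_{\UP_I}(\psi),
\]
so that summing over $w$ (equivalently over $I$) reassembles as $(\ind^{\GL}_{\UT} \otimes \ind^{\GL}_{\UT}) \circ \Delta_{\UT}(\psi)$ in that degree.

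The main obstacle lies in this final term-by-term matching. The two identifications that do the work are $P_{[i]} \cap {}^w\UT_n = {}^w\UP_I$ for $I = w^{-1}([i])$, which falls out of the ascent-set combinatorics of Section~\ref{sec:CFcombprelims}, and the compatibility of the $\GL$- and $\UT$-side straightenings under this identification; the ubiquitous $w$-twist on the $\UT$-side is absorbed by the naturality statement Proposition~\ref{prop:naturality}. Once the dictionary $w \leftrightarrow I$ is in place and the intersection $P_{[i]} \cap {}^w\UT_n$ is pinned down, reconciling the deflation, restriction, and normalization constants on the two sides becomes mechanical, but setting up the dictionary correctly is where the real content sits.
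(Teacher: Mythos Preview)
Your outline matches the paper's route: multiplicativity is Lemma~\ref{lem:algebrahom}, proved via exactly the reduction you sketch, and the coalgebra side proceeds by Mackey over the double cosets $\UT_n\backslash \GL_n / \P{i}$, indexed by $i$-element subsets $I\subseteq [n]$ through Bruhat, with the intersection identity you state recorded as Lemma~\ref{lem:subgroupintersection}.

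Where you have a genuine gap is in locating the difficulty. The dictionary $w\leftrightarrow I$ and the identification $\P{i}\cap {}^{w}\UT_n = {}^{w}\UP_I$ are the easy part; the paper dispatches Lemma~\ref{lem:subgroupintersection} in one line. The real content is precisely the step you call ``mechanical'', namely the term-by-term identity
\[
\defl^{\P{i}}_{\L{i}} \circ \ind^{\P{i}}_{{}^{\w{I}^{-1}}\UP_I}\circ (\w{I})^{\ast}
\;=\;
\ind^{\L{i}}_{\UL_{[i]}}\circ (\w{I})^{\ast}\circ \defl^{\UP_I}_{\UL_I}.
\]
Commuting induction past deflation here is \emph{not} formal: unlike the product side, the radical ${}^{\w{I}^{-1}}\UR_I$ of the small parabolic is in general a proper subgroup of $\R{i}$, so the two sides average over genuinely different groups and there is no single quotient map through which both factor. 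The paper resolves this by building an explicit $|\R{i}|$-to-one surjection $\pi\colon \P{i}\times \R{i}\to \L{i}\times {}^{\w{I}}\R{i}$ and checking that it carries one indexed sum to the other; you should expect to produce an argument of that shape. Proposition~\ref{prop:naturality} governs the internal $\cfUT$ structure and will not by itself control induction from ${}^{\w{I}^{-1}}\UP_I$ up to $\P{i}$, so invoking naturality does not close this gap.
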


A second result provides useful information about the kernel of $\ind^{\GL}_{\UT}$, and may also prove to be useful in adapting this approach to the orthogonal and symplectic types following~\cite{vanLeeuwen}; refer to the introduction for a discussion of this prospect. 

Each general linear group has a ``diagram automorphism'' (actually an involutionary group antiautomorphism) which I will write as $\dagger$.  The unipotent upper triangular group $\UT_{n} \subseteq \GL_{n}$ is stable under this involution, so $\dagger$ induces a graded linear map
\begin{equation}
\label{eq:daggerautdef}
\daggeraut = \bigoplus_{n \ge 0} \daggeraut_{n} : \cf(\UT_{\bullet}) \to \cf(\UT_{\bullet})
\qquad \text{with} \qquad
\begin{array}{rccc}
\daggeraut_{n}: & \cf(\UT_{n}) & \to & \cf(\UT_{n}) \\
& \psi & \mapsto & \psi \circ \dagger.
\end{array}
\end{equation}
See Section~\ref{sec:antiautomorphism} for more details, and a proof of the following result.

\begin{thm}
\label{thm:daggeraut}
The map $\daggeraut$ is a graded antiautomorphism of $\cf(\UT_{\bullet})$, and moreover $\ind^{\GL}_{\UT}$ is $\daggeraut$-invariant, in the sense that $\ind^{\GL}_{\UT} \circ \daggeraut = \ind^{\GL}_{\UT}$.
\end{thm}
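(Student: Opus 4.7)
The plan is to verify the antiautomorphism property of $\daggeraut$ directly from the product and coproduct formulas in Theorem~\ref{thm:HopfAlgebra}, and then to deduce the $\daggeraut$-invariance of $\ind^{\GL}_{\UT}$ from the classical fact that $g$ and $g^\dagger$ are $\GL_n$-conjugate. Concretely, I take $\dagger$ to be $X \mapsto w_{0} X^{T} w_{0}$ where $w_{0}$ is the anti-diagonal permutation matrix, so that $(X^\dagger)_{a,b} = X_{n+1-b,\, n+1-a}$; this is an involutive antiautomorphism of $\GL_{n}$ which preserves $\UT_{n}$ and hence descends to the involution $\daggeraut$ on $\cf(\UT_{\bullet})$.

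For the algebra antihomomorphism property, fix $\psi \in \cf(\UT_{i})$ and $\chi \in \cf(\UT_{n-i})$. The formula for $\mu$ gives $(\psi \cdot \chi)(g) = \psi(A)\chi(D)$ where $A$ is the canonicalized top-left $i \times i$ block of $g$ and $D$ is the canonicalized bottom-right $(n-i) \times (n-i)$ block. A direct index computation shows that the top-left $i \times i$ block of $g^\dagger$ equals the $\dagger$-image (in $\UT_{i}$) of the canonicalized bottom-right $i \times i$ block of $g$, and symmetrically for the other factor; substituting into $(\psi \cdot \chi)(g^\dagger)$ and comparing with $(\daggeraut\chi \cdot \daggeraut\psi)(g)$ then yields $\daggeraut(\psi \cdot \chi) = \daggeraut(\chi) \cdot \daggeraut(\psi)$. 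For the corresponding coalgebra identity, the key combinatorial input is that with $\omega(k) = n+1-k$ the involution $\dagger$ sends each triple $(\UL_{I}, \UR_{I}, \UP_{I})$ to $(\UL_{\omega(I^{c})}, \UR_{\omega(I^{c})}, \UP_{\omega(I^{c})})$. Reindexing the sum defining $\Delta(\daggeraut\psi)$ via $J \leftrightarrow K = \omega(J^{c})$, using the normality $\UR_{K} \trianglelefteq \UP_{K}$ to rewrite each expression $\sum_{r} \psi(r^\dagger l^\dagger)$ as a sum over $l^\dagger \UR_{K}$, and checking that under the straightening maps $\st_{(J, J^{c})}$ and $\st_{(K, K^{c})}$ the bijection $l \mapsto l^\dagger$ corresponds to $(l_{1}, l_{2}) \mapsto (l_{2}^\dagger, l_{1}^\dagger)$, then produces the expected coalgebra antihomomorphism identity. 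This bookkeeping is the main obstacle in the proof, as it requires carefully tracking the action of $\dagger$ on the functorial subgroups $\UL_{I}, \UR_{I}, \UP_{I}$ through the canonicalization isomorphism.

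For the invariance of $\ind^{\GL}_{\UT}$, expand via the induction formula of Section~\ref{sec:classfunctions} to get $\ind^{\GL_{n}}_{\UT_{n}}(\daggeraut\psi)(g)$ as a sum of $\psi((xgx^{-1})^\dagger)$ over $x \in \GL_{n}$ with $xgx^{-1} \in \UT_{n}$. The antihomomorphism identity gives $(xgx^{-1})^\dagger = (x^\dagger)^{-1} g^\dagger x^\dagger$, and the substitution $y = (x^\dagger)^{-1}$, which is a bijection of $\GL_{n}$, transforms the sum into $\ind^{\GL_{n}}_{\UT_{n}}(\psi)(g^\dagger)$ (the condition $xgx^{-1} \in \UT_{n}$ becoming $yg^\dagger y^{-1} \in \UT_{n}^{\dagger} = \UT_{n}$). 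Finally, $g$ is conjugate to $g^{T}$ in $\GL_{n}$ by the classical theorem that conjugacy classes are determined by rational canonical form, and conjugation by $w_{0}$ preserves the conjugacy class; so $g^\dagger = w_{0} g^{T} w_{0}$ is $\GL_{n}$-conjugate to $g$, and since $\ind^{\GL_{n}}_{\UT_{n}}(\psi)$ is a class function, $\ind^{\GL_{n}}_{\UT_{n}}(\psi)(g^\dagger) = \ind^{\GL_{n}}_{\UT_{n}}(\psi)(g)$, completing the proof.
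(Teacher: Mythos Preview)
Your proposal is correct and follows essentially the same approach as the paper's proof: both verify the product and coproduct antiautomorphism identities by direct computation, using the key fact that $\dagger$ sends the triple $(\UL_{I}, \UR_{I}, \UP_{I})$ to $(\UL_{\longw(I^{c})}, \UR_{\longw(I^{c})}, \UP_{\longw(I^{c})})$, and both reduce the $\daggeraut$-invariance of $\ind^{\GL}_{\UT}$ to the conjugacy of $g$ and $g^{\dagger}$ in $\GL_{n}$. The only minor difference is that the paper establishes this conjugacy via the rank sequence $\operatorname{rank}((g-1_{n})^{k})$ characterizing unipotent classes, whereas you invoke the general fact that every matrix is $\GL_{n}$-conjugate to its transpose; both arguments are standard and valid.
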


\subsection{Review of $\cf(\GL)$ as a Hopf algebra}
\label{sec:GLalg}

This section will recall relevant details about the Hopf algebra $\cf(\GL_{\bullet})$, following~\cite{Zel}.  
The structure maps will make use of the following subgroups of $\GL_{n}$, for each $n \ge i \ge 0$:
\begin{align*}
\textbf{(Levi)} && \L{i} = \begin{bmatrix} \GL_{i} & 0 \\ 0 & \GL_{n-i} \end{bmatrix} &= \left\{g \in \GL_{n} \;\middle|\; \hspace{-0.35em} \begin{array}{c} \text{$g_{r, s} \neq 0$ only if $(r, s) \in [i] \times [i]$} \\ \text{or $(r, s) \in [i]^{c} \times [i]^{c}$}  \end{array} \hspace{-0.35em}  \right\}, \\[1em]
\textbf{(radical)} && \R{i} = \begin{bmatrix} 1_{i} & \ast \\ 0 & 1_{n-i} \end{bmatrix} &= \left\{g \in \GL_{n} \;\middle|\; \hspace{-0.35em}  \begin{array}{c} \text{$(g-1_{n})_{r, s} \neq 0$ only if $(r, s) \in [i] \times [i]^{c}$}\end{array}  \hspace{-0.35em} \right\}, \\[1em]
\textbf{(parabolic)} && \P{i} = \begin{bmatrix} \GL_{i} & \ast \\ 0 & \GL_{n-i} \end{bmatrix} &= \left\{g \in \GL_{n} \;\middle|\; \hspace{-0.35em}  \begin{array}{c} \text{$g_{r, s} = 0$ whenever $(r, s) \in [i]^{c} \times [i]$} \end{array}  \hspace{-0.35em} \right\}.
\end{align*}
These definitions should be compared to the subgroups $\UL_{I}$, $\UR_{I}$, and $\UP_{I}$ of $\UT_{n}$ defined in Section~\ref{sec:cfalgebra}; in particular the definition of $\R{i}$ is exactly the same as the subgroup $\UL_{[i]} \subseteq \UT_{n}$.  Moreover, there is an analogous semidirect product structure
\[
\P{i} = \L{i} \ltimes \R{i},
\]
and $\L[n]{i} \cong \GL_{i} \times \GL_{n-i}$.  In an abuse of notation, I will write
\[
\st_{([i], [i]^{c})}: \cf(\L[n]{i}) \to \cf(\GL_{i}) \otimes \cf(\GL_{n-i})
\]
for the invertible linear map induced by this isomorphism, in addition to the one defined in Equation~\eqref{eq:stmapnice} for unipotent groups.

\begin{thm}[{\cite[Section 9.1]{Zel}}]
The space $\cf(\GL_{\bullet})$ is a graded connected Hopf algebra, with product and coproduct
\[
\mu = \bigoplus_{n \ge i \ge 0}  \ind^{\GL_{n}}_{\P{i}} \circ \infl^{\P{i}}_{\L{i}} \circ \st_{([i], [i]^{c})}^{-1}
\qquad\text{and}\qquad
\Delta = \bigoplus_{n \ge 0} \; \sum_{i = 0}^{n}  \st_{([i], [i]^{c})} \circ \defl^{\P{i}}_{\L{i}} \circ \res^{\GL_{n}}_{\P{i}}.
\]
\end{thm}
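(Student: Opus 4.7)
The plan is to verify the graded connected bialgebra axioms for $\cf(\GL_{\bullet})$; the antipode then exists automatically via the standard recursion available in any graded connected bialgebra. Connectedness is immediate, since $\GL_{0}$ is trivial so $\cf(\GL_{0}) = \CC$, and the unit and counit are the obvious identifications with this degree-zero summand.

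Associativity and coassociativity follow by transitivity along nested parabolics. For $n \ge j \ge i \ge 0$ one has $\P{i} \subseteq \P{j}$, with the Levi $\L{i}$ identified with a parabolic subgroup of $\L{j}$ via the natural isomorphism of block-diagonal matrices. Transitivity of induction ($\ind^{\GL_{n}}_{\P{j}} \circ \ind^{\P{j}}_{\P{i}} = \ind^{\GL_{n}}_{\P{i}}$) and of inflation, combined with naturality of the straightening isomorphisms $\st$, assembles the associator square; taking adjoints with respect to the standard inner product converts this into the coassociator square for $\Delta$. This is entirely parallel to Proposition~\ref{prop:upwardtransitivity} and Proposition~\ref{prop:downtransitivity} in the unipotent setting.

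The main step and expected obstacle is the bialgebra compatibility axiom, namely that $\Delta$ is an algebra homomorphism. Fixing $\psi \in \cf(\GL_{n})$, $\chi \in \cf(\GL_{m})$, and a split index $0 \le k \le n+m$, one must compute
\[
\defl^{\P{k}}_{\L{k}} \circ \res^{\GL_{n+m}}_{\P{k}} \circ \ind^{\GL_{n+m}}_{\P{n}} \circ \infl^{\P{n}}_{\L{n}}(\psi \otimes \chi)
\]
inside $\GL_{n+m}$ and match the result, term by term, with the corresponding piece of $\mu \bigl( (\Delta \otimes \Delta)(\psi \otimes \chi) \bigr)$ indexed by pairs $(i, j)$ with $i + j = k$. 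The key tool is Mackey's formula applied to $\res \circ \ind$ between the two parabolics $\P{k}$ and $\P{n}$: by the Bruhat decomposition, the double cosets $\P{k} \backslash \GL_{n+m} / \P{n}$ are indexed by $2 \times 2$ matrices of nonnegative integers with row sums $(k,\, n+m - k)$ and column sums $(n,\, m)$, equivalently by pairs $(i, j)$ with $0 \le i \le n$, $0 \le j \le m$, and $i + j = k$. Each double coset contributes one Mackey summand, and using that the unipotent radical $\R{n}$ lies in the kernel of $\infl^{\P{n}}_{\L{n}}$ and that $\R{k}$ lies in the kernel of $\defl^{\P{k}}_{\L{k}}$, that summand collapses to the appropriate piece of the tensor product expansion.

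The real work lies in matching these Mackey summands index-by-index after the various straightening isomorphisms, and in checking that on each double coset the relevant Levi intersections and unipotent radical decompositions split correctly so that the $\defl \circ \res$ factor on the outside factors as the product of the Harish-Chandra products on the two $\GL_{i} \times \GL_{n-i}$ and $\GL_{j} \times \GL_{m-j}$ pieces. This bookkeeping is classical for $\GL_{n}$ and is carried out in~\cite{Zel}; it is the general linear analogue of the compatibility proof for $\cfUT$ given in Proposition~\ref{prop:compatibility}, with Harish-Chandra induction replacing the inflation step of the unipotent product.
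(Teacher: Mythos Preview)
The paper does not actually prove this theorem: it is stated with a citation to~\cite[Section 9.1]{Zel} and immediately followed by the remark that ``no further details are needed in the scope of this paper.'' So there is no proof in the paper to compare against; the result is simply quoted as background.

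Your sketch is a faithful outline of the classical argument, and you correctly identify that the substantive step is the Mackey computation for $\P{k} \backslash \GL_{n+m} / \P{n}$ indexed by $2 \times 2$ contingency tables. Since you yourself note at the end that this bookkeeping ``is carried out in~\cite{Zel},'' your proposal is really a summary of the cited source rather than an alternative proof, which is appropriate here.
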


For a more detailed---and combinatorial---account of $\cf(\GL_{\bullet})$, the reader should refer to~\cite[\nopp IV]{Mac} or~\cite{Zel}. However, no further details are needed in the scope of this paper.

\subsection{Proof of Theorem~\ref{thm:inductionhomomorphism}}
\label{sec:inductionhom}

This section will give a proof Theorem~\ref{thm:inductionhomomorphism}, which follows two lemmas and some intervening definitions.  The first lemma shows that $\ind^{\GL}_{\UT}$ is an algebra homomorphism; since the algebra structure on $\cf(\UT_{\bullet})$ is the same as that in~\cite{AgEtAl}, this result is not original.  However, it has not been published elsewhere, so I have included a proof for the sake of completeness.

\begin{lem}
\label{lem:algebrahom}
The map $\ind^{\GL}_{\UT}: \cf(\UT_{\bullet}) \shortto \cf(\GL_{\bullet})$ is a graded algebra homomorphism.
\end{lem}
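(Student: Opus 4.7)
The plan is to verify the algebra homomorphism property on each graded summand of the product. Fix $n \ge i \ge 0$ and $\psi_{1} \otimes \psi_{2} \in \cf(\UT_{i}) \otimes \cf(\UT_{n-i})$; after unpacking the product formulas from Theorem~\ref{thm:HopfAlgebra} and Section~\ref{sec:GLalg}, the identity to establish reduces to
\[
\ind^{\GL_{n}}_{\UT_{n}} \circ \infl^{\UT_{n}}_{\UL_{[i]}} \circ \st_{([i], [i]^{c})}^{-1} = \ind^{\GL_{n}}_{\P{i}} \circ \infl^{\P{i}}_{\L{i}} \circ \st_{([i], [i]^{c})}^{-1} \circ \big( \ind^{\GL_{i}}_{\UT_{i}} \otimes \ind^{\GL_{n-i}}_{\UT_{n-i}} \big).
\]
The key structural observation is that $\UR_{[i]} = \R{i}$ and $\UL_{[i]} = \UT_{i} \oplus \UT_{n-i} \subseteq \GL_{i} \oplus \GL_{n-i} = \L{i}$, so $\UT_{n} = \UL_{[i]} \ltimes \R{i}$ and $\P{i} = \L{i} \ltimes \R{i}$ share the common unipotent radical $\R{i}$, with $\UT_{n} \subseteq \P{i}$.

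I would factor the identity into three pieces. First, \emph{transitivity of induction} gives $\ind^{\GL_{n}}_{\UT_{n}} = \ind^{\GL_{n}}_{\P{i}} \circ \ind^{\P{i}}_{\UT_{n}}$. Second, the main technical step is the \emph{inflation--induction commutation}
\[
\ind^{\P{i}}_{\UT_{n}} \circ \infl^{\UT_{n}}_{\UL_{[i]}} = \infl^{\P{i}}_{\L{i}} \circ \ind^{\L{i}}_{\UL_{[i]}},
\]
a special case of the standard principle that inflation along a shared normal subgroup commutes with induction: $\R{i}$ is normal in $\P{i}$, contained in $\UT_{n}$, and passing to the quotient turns the inclusion $\UT_{n} \subseteq \P{i}$ into $\UL_{[i]} \subseteq \L{i}$. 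I would verify this directly from the formulas of Section~\ref{sec:classfunctions} by writing each $g \in \P{i}$ uniquely as $g = \ell r$ with $\ell \in \L{i}$ and $r \in \R{i}$; both sides then reduce to the same average over $\L{i}$-conjugates of $\ell$ landing in $\UL_{[i]}$, once the counting factors $|\R{i}|$, $|\P{i}|$, and $|\UT_{n}|$ are balanced. Third, \emph{straightening compatibility}: since both $\UL_{[i]} \cong \UT_{i} \times \UT_{n-i}$ and $\L{i} \cong \GL_{i} \times \GL_{n-i}$ arise from block-diagonal decompositions, induction distributes over the tensor factors and yields
\[
\ind^{\L{i}}_{\UL_{[i]}} \circ \st_{([i], [i]^{c})}^{-1} = \st_{([i], [i]^{c})}^{-1} \circ \big( \ind^{\GL_{i}}_{\UT_{i}} \otimes \ind^{\GL_{n-i}}_{\UT_{n-i}} \big).
\]

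Composing these three pieces in the natural order produces the desired identity, and preservation of units is immediate since the degree-zero components of both Hopf algebras are spanned by the trivial character of the trivial group. The main obstacle is the inflation--induction commutation: while well known in the abstract, carrying it out by hand with the explicit sum formulas requires careful bookkeeping, since one must reindex the sums over $\UT_{n}$- and $\P{i}$-cosets in parallel and check that the $\R{i}$-coset weights cancel correctly on both sides. Everything else is a routine consequence of the transitivity of induction and the compatibility of induction with direct products of groups.
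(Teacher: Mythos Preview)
Your proposal is correct and follows essentially the same approach as the paper: both reduce to the identity $\ind^{\P{i}}_{\UT_{n}} \circ \infl^{\UT_{n}}_{\UL_{[i]}} = \infl^{\P{i}}_{\L{i}} \circ \ind^{\L{i}}_{\UL_{[i]}}$ via transitivity of induction and the straightening compatibility, and then verify this inflation--induction commutation by direct computation using the semidirect product decomposition $\P{i} = \L{i} \ltimes \R{i}$ together with $\R{i} = \UR_{[i]}$ and $\UL_{[i]} = \L{i} \cap \UT_{n}$. The paper's proof is slightly more explicit about the balancing of scalar factors (using $|\UT_{n}:\R{i}| = |\UL_{[i]}|$), but the argument is the same.
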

\begin{proof}
By definition, $\ind^{\GL}_{\UT}$ is a graded linear map, so what remains is to show that it is multiplicative, in the sense that 
\[
\ind^{\GL}_{\UT}  \circ \mu = \mu \circ (\ind^{\GL}_{\UT} \otimes \ind^{\GL}_{\UT}).
\]
Fixing $n \ge i \ge 0$ and comparing the multidegree $(i, n-i)$ components, the equation above is equivalent to
\[
\ind^{\GL_{n}}_{\UT_{n}} \circ \infl^{\UT_{n}}_{\UL_{[i]}} \circ \st_{([i], [i]^{c})}^{-1} 
=
 \ind^{\GL_{n}}_{\P{i}} \circ \infl^{\P{i}}_{\L{i}} \circ \st_{([i], [i]^{c})}^{-1} \circ  \ind^{\GL_{i} \times \GL_{n - i}}_{\UT_{i} \times \UT_{n - i}}
\]
This statement is implied by a simpler condition; after using the identities
\[
\ind^{\GL_{n}}_{\UT_{n}} = \ind^{\GL_{n}}_{\P[n]{i}} \circ \ind^{\P[n]{i}}_{\UT_{n}}
\qquad\text{and}\qquad
\st_{([i], [i]^{c})}^{-1} \circ \ind^{\GL_{i} \times \GL_{n - i}}_{\UT_{i} \times \UT_{n - i}} =  \ind^{\L{i}}_{\UL_{([i], [i]^{n})}} \circ  \st_{([i], [i]^{c})}^{-1},
\]
and cancelling the terms $\ind^{\GL_{n}}_{\P[n]{i}}$ and $\st_{([i], [i]^{c})}^{-1}$ from both expressions, one obtains
\[
\ind^{\P[n]{i}}_{\UT_{n}} \circ \infl^{\UT_{n}}_{\UL_{[i]}} = \infl^{\P[n]{i}}_{\L{i}} \circ  \ind^{\L{i}}_{\UL_{[i]}};
\]
the remainder of the proof establishes this identity.

For $p \in \P{i}$ write $p_{L} \in \L{i}$ and $p_{R} \in \R{i}$ for the terms in the factorization $p = p_{L}p_{R}$ guaranteed by the semidirect product $\P{i} = \L{i} \rtimes \R{i}$.  For $\psi \in \cf(\UL_{[i]})$ and $g \in \P{i}$, 
\[
\ind^{\P[n]{i}}_{\UT_{n}} \circ \infl^{\UT_{n}}_{\UL_{[i]}} (\psi)(g) = \frac{1}{|\UT_{n}|} \sum_{\substack{x \in \P{i} \\ x gx^{-1} \in \UT_{n}  }} \psi\big((x gx^{-1})_{L} \big)
\]
and 
\[
\infl^{\P[n]{i}}_{\L{i}} \circ  \ind^{\L{i}}_{\UL_{[i]}} (\psi)(g) = \frac{1}{|\UL_{[i]} |} \hspace{-0.5em} \sum_{\substack{y \in \L{i} \\ y g_{L} y^{-1} \in \UL_{[i]}  }} \hspace{-1em} \psi(y g_{L} y^{-1}).
\]
The above expressions can be seen to be equal as follows: for $x \in \P{i}$, $(xgx^{-1})_{L} = x_{L} g_{L} (x_{L})^{-1}$, and $\UL_{[i]} = \L{i} \cap \UT_{n}$, so
\[
\frac{1}{|\UT_{n}|} \sum_{\substack{x \in \P{i} \\ x gx^{-1} \in \UT_{n}  }}  \hspace{-0.5em} \psi\big((x gx^{-1})_{L} \big) 
= \frac{1}{|\UT_{n}|} \hspace{-1.5em} \sum_{\substack{x_{L} \in \L{i} \\ x_{R} \in \R{i} \\ x_{L} g_{L}(x_{L}) ^{-1}  \in \UL_{[i]}   }} \hspace{-1.5em}  \psi\big(x_{L} g_{L} (x_{L})^{-1} \big)
= \frac{|\R{i}|}{|\UT_{n} |} \hspace{-1em} \sum_{\substack{y \in \L{i} \\ y g_{L} y^{-1} \in \UL_{[i]}  }} \hspace{-1em} \psi(y g_{L} y^{-1}),
\]
and moreover $\R{i} = \UR_{[i]}$, so that $|\UT_{n}:\R{i}| = |\UL_{[i]}|$.
\end{proof}

The remainder of the section builds the necessary framework to establish that $\ind^{\GL}_{\UT}$ is a coalgebra homomorphism, which is done in the proof of Theorem~\ref{thm:inductionhomomorphism} following Lemma~\ref{lem:subgroupintersection}.  
A focal point for this proof is the double coset decomposition
\[
\UT_{n} \backslash \GL_{n} / \P{i},
\]
which I will now review.  
The Bruhat decomposition (see e.g.~\cite[173]{Zel}) gives a bijection for each $n \ge i \ge 0$:
\[
\begin{array}{rcl}
\left\{\begin{array}{c} \text{Double cosets} \\ \text{$\UT_{n} \backslash \GL_{n} / \P{i} $} \end{array}\right\} & \longleftrightarrow & \left\{\begin{array}{c} \text{Left cosets} \\ \text{$\S{[n]}/(\S{[i]} \times \S{[i]^{c}})$} \end{array}\right\} \\[1em]
\UT_{n} \; x \; \P{i}  & \mapsto & \S{[n]} \cap (\UT_{n} \; x \; \P{i})  \\[0.25em]
\UT_{n} \;\sigma\; \P{i}  & \mapsfrom & \sigma(\S{[i]} \times \S{[i]^{c}})
\end{array}
\]
where $\S{[n]}$ is identified with the subset of permutation matrices in $\GL_{n}$.  
A distinguished set of left coset representatives for $\S{[n]}/(\S{[i]} \times \S{[i]^{c}})$, $n \ge i \ge 0$, can be constructed as follows.  For $I \subseteq [n]$,  let $\w{I} \in \S{[n]}$ be the unique permutation for which 
\[
{}^{\w{I}}(1 < \cdots < n) = i_{1} < \cdots < i_{|I|} < j_{1} < \cdots < j_{n-|I|},
\]
where $i_{1} < \cdots < i_{|I|} \in \TTT[I]$ and $j_{1} < \cdots < j_{n-|I|} \in \TTT[I^{c}]$ are the usual (numerical) orders on $I$ and $I^{c}$.  The symmetric group $\S{[n]}$ acts by permutation of the $i$-element subsets of $[n]$, and the stabilizer of $[i]$ is $\S{[i]} \times \S{[i]^{c}}$, giving the following bijection:
\[
\begin{array}{ccc}
\left\{ \begin{array}{c} \text{Subsets $I \subseteq [n]$} \\ \text{ with $|I| = i$} \end{array} \right\}
& \to &
 \left\{\begin{array}{c} \text{Left cosets of} \\ \text{$\S{[n]}/(\S{[i]} \times \S{[i]^{c}})$} \end{array}\right\} \\[1em]
I & \mapsto & \w{I} (\S{[i]} \times \S{[i]^{c}})) \\ 
\sigma([i]) & \mapsfrom & \sigma (\S{[i]} \times \S{[i]^{c}})) ,
\end{array}
\]  
so the set $\{\w{I} \;|\; \text{$I \subseteq [n]$ and $|I| = i$}\}$ is also a complete set of double coset representatives in $\UT_{n} \backslash \GL_{n} / \P{i}$.  In addition to being canonical, this choice of (double) coset representatives is justified by the following result; recall the groups $\UL_{I}$, $\UR_{I}$, and $\UP_{I}$ defined in Section~\ref{sec:cfalgebra}.  

\begin{lem}
\label{lem:subgroupintersection}
Let $n \ge i \ge 0$ and fix $I \subseteq [n]$ with $|I| = i$.  Then
\[
\UL_{I} = {}^{\w{I}}\L{i} \cap \UT_{n}, \qquad \UR_{I} = {}^{\w{I}}\R{i} \cap \UT_{n},
\]
and 
\[
\UP_{I} = {}^{\w{I}}\P{i} \cap \UT_{n}.
\]
\end{lem}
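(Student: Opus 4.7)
The plan is to prove each equality by a direct entry-by-entry computation, using the explicit combinatorial description of $\w{I}$ together with the general fact (see the remark following the definition of the conjugation action $X \mapsto {}^{\sigma}X$ in Section~\ref{sec:grouphomomorphisms}) that, under the identification of $\w{I} \in \S{[n]}$ with its permutation matrix in $\GL_n$, conjugation by $\w{I}$ sends the matrix position $(r,s)$ to $(\w{I}(r), \w{I}(s))$. The key combinatorial input is that $\w{I}$ is the unique permutation of $[n]$ for which the restrictions $\w{I}|_{[i]} \colon [i] \to I$ and $\w{I}|_{[i]^c} \colon [i]^c \to I^c$ are both order-preserving bijections. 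In particular, the decomposition $[n] \times [n] = ([i] \times [i]) \cup ([i] \times [i]^c) \cup ([i]^c \times [i]) \cup ([i]^c \times [i]^c)$ is carried bijectively under $(r,s) \mapsto (\w{I}(r), \w{I}(s))$ onto the analogous decomposition with $I$ and $I^c$ in place of $[i]$ and $[i]^c$.

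First I would rewrite each of the three subgroups as sets of matrices constrained at specified positions: $\L{i}$ consists of the elements of $\GL_n$ whose off-diagonal entries vanish outside $([i] \times [i]) \cup ([i]^c \times [i]^c)$, $\R{i}$ consists of the unipotent matrices whose off-diagonal entries vanish outside $[i] \times [i]^c$, and $\P{i}$ consists of the elements of $\GL_n$ whose entries vanish on $[i]^c \times [i]$. Applying the position map $(r,s) \mapsto (\w{I}(r), \w{I}(s))$ together with the block correspondence above, I obtain descriptions of ${}^{\w{I}}\L{i}$, ${}^{\w{I}}\R{i}$, and ${}^{\w{I}}\P{i}$ in identical form, with $I$ replacing $[i]$ and $I^c$ replacing $[i]^c$ throughout.

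To conclude, I would intersect each of these conjugates with $\UT_n$, which imposes that every diagonal entry equals $1$ and every entry at a position $(r,s)$ with $r > s$ vanishes. The three resulting descriptions then match the definitions of $\UL_I$, $\UR_I$, and $\UP_I$ given in Section~\ref{sec:cfalgebra} on the nose, yielding the claimed equalities.

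The one point requiring care, and the main obstacle such as it is, is the $\UR_I$ equality: whereas $\R{i}$ lies entirely on or above the diagonal (since every element of $[i]$ is strictly less than every element of $[i]^c$), the conjugate ${}^{\w{I}}\R{i}$ may have non-identity entries at positions $(r,s) \in I \times I^c$ with $r > s$, because $I$ and $I^c$ can interleave within $[n]$. Intersecting with $\UT_n$ kills exactly those positions, leaving non-identity entries only at positions $(r,s) \in I \times I^c$ with $r < s$, as required. The equalities for $\UL_I$ and $\UP_I$ follow by the same argument and are slightly easier, since no below-diagonal positions need to be excised beyond what $\UT_n$ already requires of the original blocks.
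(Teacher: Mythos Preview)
Your proposal is correct and follows essentially the same approach as the paper. The paper's own proof is a one-sentence sketch stating that each assertion follows from the definitions, illustrated by the single observation that $\{(\w{I}(r), \w{I}(s)) \mid (r, s) \in [i]^{c} \times [i]\} = I^{c} \times I$; your write-up simply fills in the details of that same entry-by-entry argument, including the block correspondence under $(r,s) \mapsto (\w{I}(r), \w{I}(s))$ and the cleanup step of intersecting with $\UT_{n}$.
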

\begin{proof}
Each assertion follows from the relevant definitions; for example the fact that 
\[
\{(\w{I}(r), \w{I}(s)) \;|\; (r, s) \in [i]^{c} \times [i] \} = I^{c} \times I
\]
for each $I \subseteq [n]$ implies that $\UP_{I} = {}^{\w{I}}\P{i} \cap \UT_{n}$.
\end{proof}

\begin{proof}[Proof of Theorem~\ref{thm:inductionhomomorphism}]
Lemma~\ref{lem:algebrahom} shows that $\ind^{\GL}_{\UT}$ is a graded algebra homomorphism, so what remains is to show that it is a coalgebra homomorphism, meaning that
\[
\Delta \circ \ind^{\GL}_{\UT} = \big(\ind^{\GL}_{\UT} \otimes \ind^{\GL}_{\UT} \big)\circ \Delta.
\]
It is sufficient to compare the components of these maps between the (multi-) degree $n$ and $(i, n-i)$ components of $\cf(\UT_{\bullet})$ and $\cf(\GL_{\bullet}) \otimes \cf(\GL_{\bullet})$ for each $n \ge i \ge 0$; these are respectively
\[
\st_{([i], [i]^{c})} \circ \defl^{\P{i}}_{\L{i}} \circ \res^{\GL_{n}}_{\P{i}} \circ \ind^{\GL_{n}}_{\UT_{n}}
\]
and
\[
\sum_{\substack{I \subseteq [n] \\ |I| = i }} \ind^{\GL_{i} \times \GL_{n - i}}_{\UT_{i} \times \UT_{n-i}} \circ  \st_{(I, I^{c})} \circ \defl^{\UP_{I}}_{\UL_{I}} \circ \res^{\UT_{n}}_{\UP_{I}}.
\]
In order to show that the expressions above are equal, some simplification is required; first, with Lemma~\ref{lem:subgroupintersection} and the preceding discussion, the Mackey formula states that
\[
 \res^{\GL_{n}}_{\P{i}} \circ \ind^{\GL_{n}}_{\UT_{n}} = \sum_{\substack{I \subseteq [n] \\ |I| = i }} \ind^{\P{i}}_{ {}^{\w{i}^{-1}} \UP_{I} } \circ (\w{I})^{\ast} \circ \res^{\UT_{n}}_{\UP_{I} },
\]
while direct computations yields the equations
\[
\st_{(I, I^{c})} =  \st_{([i], [i]^{c})} \circ (\w{I})^{\ast}
\qquad\text{and}\qquad
\ind^{\GL_{i} \times \GL_{n - i}}_{\UT_{i} \times \UT_{n-i}}  \circ \st_{([i], [i]^{c})} = \st_{([i], [i]^{c})} \circ \ind^{\L{i}}_{\UL_{[i]}}.
\]
With the identities above, it is sufficient to show that for each $i$-element subset $I \subseteq [n]$,
\begin{equation}
\label{eq:GLsol}
\defl^{\P{i}}_{\L{i}} \circ  \ind^{\P{i}}_{ {}^{\w{I}^{-1}} \UP_{I} } \circ (\w{I})^{\ast} 
=
\ind^{\L{i}}_{\UL_{[i]}} \circ (\w{I})^{\ast} \circ \defl^{ \UP_{I}}_{\UL_{I}},
\end{equation}
which is the content of the remainder of the proof.

To show Equation~\eqref{eq:GLsol}, fix $\psi \in \cf(\UP_{I})$ and $g \in \L{i}$.  Then
\[
\defl^{\P{i}}_{\L{i}} \circ  \ind^{\P{i}}_{ {}^{\w{I}^{-1}} \UP_{I} } \circ (\w{I})^{\ast} (\psi)(g) 
= \frac{1}{|\R{i}| |\UP_{I}|} \hspace{-1.75em} \sum_{\substack{x \in \P{i} \\ y \in \R{i} \\ x g y x^{-1} \in {}^{\w{I}^{-1} }\UP_{I} }} \hspace{-2em} \psi (\w{I} x g y x^{-1} \w{I}^{-1})
\]
and
\[
\ind^{\L{i}}_{\UL_{[i]}} \circ (\w{I})^{\ast} \circ \defl^{ \UP_{I}}_{\UL_{I}}(\psi)(g) 
= \frac{1}{|\UL_{[i]}| |\UR_{I}|} \hspace{-1em} \sum_{\substack{ x \in \L{i} \\ y \in \UR_{I} \\ xgx^{-1} \in \UL_{[i]} }} \hspace{-1.5em}  \psi( \w{I} xgx^{-1} \w{I}^{-1} y ).
\]
Now, consider the map
\[
\begin{array}{rcl}
\pi: \P{i} \times \R{i} & \to & \L{i} \times {}^{\w{I}} \R{i} \\[0.5em]
(x, y) & \mapsto & (x_{L},\, \w{I} x_{L} g^{-1} x_{R} g y x^{-1}\w{I}^{-1} ),
\end{array}
\]
which is well defined due to the fact that $\R{i} \trianglelefteq \P{i}$, so that successively $g^{-1} x_{R} g \in \R{i}$, $g^{-1} x_{R} g y (x_{R})^{-1} \in \R{i}$, and thus $x_{L} g^{-1} x_{R} g y x^{-1} \in \R{i}$.  The map $\pi$ is also surjective: for each $(\tilde{x}, \tilde{y}) \in \L{i} \times \UR_{[i]}$,
\[
\pi^{-1}(\tilde{x}, \tilde{y}) = \{ (x,\, g^{-1} (x_{R})^{-1} g (x_{L})^{-1} \w{I}^{-1} \tilde{y} \w{I} x) \;|\; \text{$x \in \P{i}$ with $x_{L} = \tilde{x}$} \},
\]
which contains $|\R{i}|$ elements.  The key property of the map $\pi$ is that, for $(\tilde{x}, \tilde{y}) \in \L{i} \times \UR_{[i]}$ and $(x, y) \in \pi^{-1}(\tilde{x}, \tilde{y})$,
\[
\w{I} xgyx^{-1} \w{I}^{-1} = \w{I}\tilde{x} g \tilde{x}^{-1} \w{I}^{-1} y,
\]
which can be verified by direct computation; from this it follows that $x g y x^{-1} \in {}^{\w{I}^{-1} }\UP_{I}$ if and only if $\tilde{x} g \tilde{x}^{-1} \in \UL_{[i]}$ (so that $\w{I}\tilde{x} g \tilde{x}^{-1} \w{I}^{-1} \in \UL_{I}$) and also $y \in \UR_{I}$.  Thus up to scaling $\defl^{\P{i}}_{\L{i}} \circ  \ind^{\P{i}}_{ {}^{\w{I}^{-1}} \UP_{I} } \circ (\w{I})^{\ast} (\psi)(g)$ is equal to
\begin{align*}
\sum_{\substack{x \in \P{i} \\ y \in \R{i} \\ x g y x^{-1} \in {}^{\w{I}^{-1} }\UP_{I} }} \hspace{-2em} \psi (\w{I} x g y x^{-1} \w{I}^{-1})
&= \hspace{-0.5em} \sum_{\substack{ \tilde{x} \in \L{i} \\ \tilde{y} \in \UR_{I} \\ \tilde{x}g\tilde{x}^{-1} \in \UL_{[i]} }} \hspace{-0.5em} \sum_{(x, y) \in \pi^{-1}(\tilde{x}, \tilde{y})} \hspace{-0.5em}  \psi (\w{I} x g y x^{-1} \w{I}^{-1}) \\
&= |\R{i}| \hspace{-1.5em} \sum_{\substack{ \tilde{x} \in \L{i} \\ \tilde{y} \in \UR_{I} \\ \tilde{x}g\tilde{x}^{-1} \in \UL_{[i]} }}  \hspace{-1.5em}  \psi( \w{I}\tilde{x} g \tilde{x}^{-1} \w{I}^{-1} y ).
\end{align*}
Finally, $|\UL_{[i]}| = |\UL_{I}|$, so that $|\UL_{[i]}| |\UR_{I}| = |\UP_{I}|$, completing the proof.
\end{proof}

\subsection{The diagram automorphism}
\label{sec:antiautomorphism}

For $n \ge 0$, let $\tilde{w} \in \S{[n]}$ be the permutation defined by
\[
\longw(i) = n + 1 - i,
\]
so that $\longw$ sends the usual total order $1 < \cdots < n$ to its reverse, $n < \cdots < 1$.  Typically, $\tilde{w}$ is  denoted by $w_{0}$; my choice of notation is to avoid confusion with the permutations $\w{I}$ from Section~\ref{sec:inductionhom}.  Let
\[
\begin{array}{rcl}
\dagger\colon \Mat[n] & \to & \Mat[n] \\
x & \mapsto & x^{\dagger}
\end{array}
\qquad\text{with}\qquad
x^{\dagger} = {}^{\longw} (x^{T}).
\]
Thus, $x^{\dagger}$ is the matrix with $r, s$-entry $x_{\longw(s), \longw(r)}$.

\begin{ex}
For $n = 4$, $q = 5$, and
\[
x = \begin{bmatrix} 1 & 4 & 0 & 0 \\ 2 & 0 & 1 & 0 \\ 0 & 0 & 1 & 0 \\ 0 & 0 & 0 & 3 \end{bmatrix}
\qquad\text{we have}\qquad
x^{\dagger} = \begin{bmatrix} 3 & 0 & 0 & 0 \\ 0 & 1 & 1 & 0 \\ 0 & 0 & 0 & 4 \\ 0 & 0 & 2 & 1 \end{bmatrix}
\]
\end{ex}

Several properties of this map are immediate from the definition: $\dagger$ is an involution, $\dagger$ preserves rank, and $\dagger$ reverses multiplication, in that 
\[
(xy)^{\dagger} = y^{\dagger} x^{\dagger}
\qquad \text{for $x, y \in \Mat([n])$}.
\]
Thus, $\dagger$ is an antiautomorphism of the group $\GL_{n}$.  Further, $\UT_{n}^{\dagger} = \UT_{n}$, so $\dagger$ restricts to an antiautomorphism of $\UT_{n}$.  

The map $\daggeraut$ in defined in Equation~\eqref{eq:daggerautdef} is given by by pre-composition with $\dagger$ in each graded degree.  The remainder of the section uses this definition to prove Theorem~\ref{thm:daggeraut}

\begin{proof}[Proof of Theorem~\ref{thm:daggeraut}]
First consider the claim that $\ind^{\GL}_{\UT} \circ \daggeraut = \ind^{\GL}_{\UT}$.  From the definition of the induction map, it suffices to show that for each $n \ge 0$ and $g \in \UT_{n}$, the elements $g$ and $g^{\dagger}$ are $\GL_{n}$-conjugate.  The conjugacy class of a unipotent element $G$ of $\GL_{n}$ is determined by the sequence
\[
\operatorname{rank}\big((g - 1_{n})^{k}\big) 
\qquad\text{for $k = 1, 2, \ldots, n$.}
\]
Since $(g^{\dagger} - 1_{n})^{k} = ((g - 1_{n})^{k})^{\dagger}$ has the same rank as $(g - 1_{n})^{k}$, $g$ and $g^{\dagger}$ are $\GL_{n}$-conjugate.

The rest of the proof will establish that $\daggeraut$ is an involutionary Hopf algebra antiautomorphism of $\cf(\UT_{\bullet})$.  By definition, $\daggeraut$ is a graded linear transformation of order two, so what remains is to show that
\[
\mu \circ (\daggeraut \otimes \daggeraut) = \daggeraut \circ \mu \circ \beta
\qquad\text{and}\qquad
\Delta \circ \daggeraut = \beta \circ (\daggeraut \otimes \daggeraut) \circ  \Delta,
\] 
where $\beta: \cf(\UT_{\bullet}) \otimes \cf(\UT_{\bullet}) \shortto \cf(\UT_{\bullet}) \otimes \cf(\UT_{\bullet})$ is the graded linear map which interchanges the first and second tensor factors.

The following facts---which can be verified directly---will be used.  For $n \ge 0$ and $I \subseteq [n]$,
\[
\UL_{I}^{\dagger} = \UL_{\longw(I^{c})}, \qquad
\UR_{I}^{\dagger} = \UR_{\longw(I^{c})},
\qquad\text{and}\qquad
\UP_{I}^{\dagger} = \UP_{\longw(I^{c})}.
\]
For each element $p \in \UP_{I}$, write $p_{1} \in \UT_{|I|}$, $p_{2} \in \UT_{|I^{c}|}$, and $p_{R} \in \UR_{I}$ for the elements satisfying $p = \big( ({}^{\cano_{I}^{-1}} p_{1}) \oplus ({}^{\cano_{I^{c}}^{-1}} p_{2}) \big)p_{R}$, where $\cano$ is as defined in Equation~\eqref{eq:canomapdef} so that $({}^{\cano_{I}^{-1}} p_{1}) \oplus ({}^{\cano_{I^{c}}^{-1}} p_{2}) \in \UL_{I}$.  Then
\begin{equation}
\label{eq:daggerautproof}
p^{\dagger} = \big( ({}^{\cano_{\longw(I^{c})}^{-1}} (p_{2}^{\dagger})) \oplus ({}^{\cano_{\longw(I)}^{-1}} (p_{1}^{\dagger}))\big)  ({}^{({}^{\cano_{I}^{-1}} p_{1}) \oplus ({}^{\cano_{I^{c}}^{-1}} p_{2}) } p_{R})^{\dagger}
\end{equation}
gives a decomposition of $p^{\dagger} \in \UP_{\longw(I^{c})}$ in accordance with the semidirect product structure $\UP_{\longw(I^{c})} = \UL_{\longw(I^{c})} \rtimes \UR_{\longw(I^{c})}$, where ${}^{({}^{\cano_{I}^{-1}} p_{1}) \oplus ({}^{\cano_{I^{c}}^{-1}} p_{2}) } p_{R}$ is the conjugate of $p_{R}$ by $({}^{\cano_{I}^{-1}} p_{1}) \oplus ({}^{\cano_{I^{c}}^{-1}} p_{2})$.

To show the necessary identity for the product map, take $I = [i] \subseteq [n]$ with $n \ge i \ge 0$, so that $\UP_{[i]} = \UT_{n}$.  For $g \in \UT_{n}$, and for $\psi_{1} \in \cf(\UT_{i})$, and $\psi_{2} \in \cf(\UT_{n-i})$, 
\begin{align*}
\infl^{\UT_{n}}_{\UL_{[i]}} \circ \st_{([i], [i]^{c})}^{-1} \circ (\daggeraut \otimes \daggeraut) (\psi_{1} \times \psi_{2})(g)
&= (\psi_{1} \otimes \psi_{2})(g_{1}^{\dagger} \otimes g_{2}^{\dagger}) \\
&= \daggeraut \circ \infl^{\UT_{n}}_{\UL_{[n-i]}} \circ \st_{([n-i], [n-i]^{c})}^{-1} (\psi_{2} \otimes \psi_{1})(g),
\end{align*}
so that summing over $i$ gives $\sum_{i = 1}^{n} \infl^{\UT_{n}}_{\UL_{[i]}} \circ \st_{([i], [i]^{c})}^{-1} \circ (\daggeraut \otimes \daggeraut) = \sum_{i = 1}^{n} \daggeraut \circ \infl^{\UT_{n}}_{\UL_{[i]}} \circ \st_{([i], [i]^{c})}^{-1} \circ \beta$ as desired.

For the coproduct identity, let $I \subseteq [n]$ be arbitrary.  For $\psi \in \cf(\UT_{n})$, $g_{1} \in \UT_{|I|}$, and $g_{2} \in \UT_{|I^{c}|}$, 
\begin{align*}
\st_{(I, I^{c})} \circ  \resf^{\UT_{n}}_{\UL_{I}} \circ \; \daggeraut_{n} (\psi) (g_{1} \times g_{2}) &= \frac{1}{|\UR_{I}|} \sum_{x \in \UR_{I}} \psi\big( (g x)^{\dagger} \big).
\end{align*}
Applying Equation~\eqref{eq:daggerautproof} to $(gx)^{\dagger} \in \UP_{\longw(I^{c})}$ and writing $x' = ({}^{({}^{\cano_{I}^{-1}} g_{1}) \oplus ({}^{\cano_{I^{c}}^{-1}} g_{2}) } x)^{\dagger}$, the above expression becomes
\begin{align*}
\frac{1}{|\UR_{\longw(I^{c})}|} \sum_{x' \in \UR_{\longw(I^{c})}} \psi( g^{\dagger} x' ) &= \st_{(\longw(I^{c}), \longw(I))} \circ  \resf^{\UT_{n}}_{\UL_{\longw(I^{c})}} (\psi) (g_{2}^{\dagger} \times g_{1}^{\dagger}) \\
&= (\daggeraut_{|I^{c}|} \otimes \daggeraut_{|I|} ) \circ \st_{(\longw(I^{c}), \longw(I))} \circ  \resf^{\UT_{n}}_{\UL_{\longw(I^{c})}} \circ (\psi) (g_{2} \times g_{1}).
\end{align*}
Summing over all $I$ gives $\sum_{I \subseteq [n]} \st_{(I, I^{c})} \circ  \resf^{\UT_{n}}_{\UL_{I}} \circ \; \daggeraut = \sum_{I \subseteq [n]} \beta \circ (\daggeraut \otimes \daggeraut) \circ \st_{(I, I^{c})} \circ  \resf^{\UT_{n}}_{\UL_{I}}$, completing the proof.
\end{proof}

\section{The sub-Hopf algebra $\scf(\UT_{\bullet})$}
\label{sec:subHopf}

This section introduces a sub-Hopf algebra $\scf(\UT_{\bullet})$ of $\cf(\UT_{\bullet})$.  Section~\ref{sec:scf} describes the underlying space and some of its representation theoretic origins, Section~\ref{sec:GPHopf} describes and isomorphic Hopf algebra on indifference graphs, and Section~\ref{sec:scfiso} connects these threads, giving a formal presentation of $\scf(\UT_{\bullet})$.  Finally, Section~\ref{sec:noncocommutativity} establishes that $\scf(\UT_{\bullet})$---and thus $\cf(\UT_{\bullet})$---is noncommutative and noncocommutative.

\subsection{Natural unit interval orders and $\scf(\UT_{n})$}
\label{sec:scf}

In~\cite{AlTh20}, Aliniaeifard and Thiem define a particularly well-behaved subspace $\scf(\UT_{n})$ of $\cf(\UT_{n})$, for each $n \ge 0$.  
This subspace originates in the study of supercharacter theory~\cite{DiaconisIsaacs}, but for the sake of brevity this framework will not be introduced.  
Instead, this section will describe two key bases $\scf(\UT_{n})$, with particular attention to the underlying combinatorics which will be used in the following sections.

For $n \ge 0$, a \emph{natural unit interval order} of $[n]$ is a partial order $\pi$ of $[n]$ which extends to the usual order $1 < \cdots < n$ and moreover satisfies
\[
\text{for each $(j, k) \in \pi$ with $j \neq k$}: \qquad \{(i, l) \;|\;\text{$i \le j$ and $k \le l$}\} \subseteq \pi.
\]
Thus, these partial orders satisfy the condition of Lemma~\ref{lem:patterngroupnormalcy} for the order $1 < \ldots < n$ of $[n]$, so that $\pi$ is a natural unit interval order if and only if $\UT(\pi) \trianglelefteq \UT_{n}$.  
Let
\[
\natorder_{n} = \{\text{natural unit interval orders of $[n]$}\}.
\]
The size $|\natorder_{n}|$ is the $n$th Catalan number~\cite[Exercise 6.19.ddd]{StanEnum2}, which can be seen  by drawing a Dyck path above the matrix entries which are uniformly $1$ or $0$ in each $g \in \UT(\pi)$.

\begin{ex}
Let
\[
\pi = \begin{tikzpicture}[scale = 0.65, baseline = 0.65*0.3cm]
\draw (0, 0) node[inner sep = 0.05cm] (a) {$\scriptstyle 1$};
\draw (1, 0) node[inner sep = 0.05cm] (b) {$\scriptstyle 2$};
\draw (1.5, 0.5) node[inner sep = 0.05cm] (c) {$\scriptstyle 3$};
\draw (0.5, 1) node[inner sep = 0.05cm] (d) {$\scriptstyle 4$};
\draw (a) -- (d);
\draw (b) -- (d);
\end{tikzpicture} 
\qquad \text{and} \qquad
\rho = \begin{tikzpicture}[scale = 0.65, baseline = 0.65*0.3cm]
\draw (0, 0) node[inner sep = 0.05cm] (a) {$\scriptstyle 1$};
\draw (1, 0) node[inner sep = 0.05cm] (b) {$\scriptstyle 2$};
\draw (1.5, 0.5) node[inner sep = 0.05cm] (c) {$\scriptstyle 4$};
\draw (0.5, 1) node[inner sep = 0.05cm] (d) {$\scriptstyle 3$};
\draw (a) -- (d);
\draw (b) -- (d);
\end{tikzpicture}. 
\]
Then $\pi \in \natorder_{4}$ is a natural unit interval order, but $\rho \notin \natorder_{4}$ is not, as it contains neither $(1, 4)$ nor $(2, 4)$.  The Dyck path corresponding to $\pi$ is $(EEESSESS)$, as shown below:
\[
\UT \left( \begin{tikzpicture}[scale = 0.35, baseline = 0.35*0.2cm]
\draw (0, 0) node[inner sep = 0.025cm] (a) {$\scriptscriptstyle 1$};
\draw (1, 0) node[inner sep = 0.025cm] (b) {$\scriptscriptstyle 2$};
\draw (1.5, 0.5) node[inner sep = 0.025cm] (c) {$\scriptscriptstyle 3$};
\draw (0.5, 1) node[inner sep = 0.025cm] (d) {$\scriptscriptstyle 4$};
\draw (a) -- (d);
\draw (b) -- (d);
\end{tikzpicture} \right) = 
\begin{tikzpicture}[scale = 0.45, baseline = 0.45*-2.2cm]
\draw[color = gray!50] (-0.1, 0.1) grid (4.1, -4.1);
\draw[very thick] (0, 0) -- (3, 0) -- (3, -2) -- (4, -2) -- (4, -4);
\foreach \x in {1, 2, 3, 4}{\draw[color = gray] (\x - 0.5, -\x + 0.5) node {$1$};}
\foreach \x in {1}{\foreach \y in {2, 3, 4}{\draw[color = gray] (\x - 0.5, -\y + 0.5) node {$0$};}}
\foreach \x in {2}{\foreach \y in {3, 4}{\draw[color = gray] (\x - 0.5, -\y + 0.5) node {$0$};}}
\foreach \x in {3}{\foreach \y in {4}{\draw[color = gray] (\x - 0.5, -\y + 0.5) node {$0$};}}
\foreach \x in {2, 3}{\foreach \y in {1}{\draw[color = gray] (\x - 0.5, -\y + 0.5) node {$0$};}}
\foreach \x in {3}{\foreach \y in {2}{\draw[color = gray] (\x - 0.5, -\y + 0.5) node {$0$};}}
\foreach \x in {4}{\foreach \y in {3}{\draw[color = gray] (\x - 0.5, -\y + 0.5) node {$0$};}}
\foreach \x in {4}{\foreach \y in {1}{\draw[color = gray] (\x - 0.5, -\y + 0.5) node {$\ast$};}}
\foreach \x in {4}{\foreach \y in {2}{\draw[color = gray] (\x - 0.5, -\y + 0.5) node {$\ast$};}}
\end{tikzpicture}.
\]
\end{ex}

For each $\pi \in \natorder_{n}$, the space of cosets $\CC[\UT_{n}/\UT(\pi)]$ has a canonical $\UT_{n}$-module structure which affords the permutation character
\[
\permchar^{\pi} = \ind^{\UT_{n}}_{\UT(\pi)}(\mathbbm{1}) \in \cf(\UT_{n}),
\]
where $\mathbbm{1} \in \cf(\UT(\pi))$ denotes the trivial character.  The subspace of superclass functions defined in~\cite{AlTh20} is
\[
\scf(\UT_{n}) = \CC\spanning\{ \permchar^{\pi} \;|\; \pi \in \natorder_{n} \}.
\]

The permutation characters $\{\permchar^{\pi} \;|\; \pi \in \NNN\OOO_{n}\}$ are a basis for the space $\scf(\UT_{n})$.  Moreover, $\scf(\UT_{n})$ has a second basis of characters (``supercharacters'') which have a triangular relationship with the permutation characters, and several bases consisting of class functions that are not characters.  The following results will use one of the latter bases, which consists of the identifier functions $\{\permind_{\pi} \;|\; \pi \in \natorder_{n}\}$ with
\[
\permind_{\pi}(g) = \begin{cases} 1 & \text{if $g \in \UT(\pi)$} \\ 0 & \text{otherwise.} \end{cases}
\]
As $\UT(\pi)$ is a normal subgroup for each $\pi \in \NNN\OOO_{n}$, the identifier functions abover are closely related to the permutation characters by the equation $\permind_{\pi} = q^{-|\UT_{n}:\UT(\pi)|} \permchar^{\pi}$.

\subsection{Guay-Paquet's Hopf algebra}
\label{sec:GPHopf}

This section will give a presentation of the ``Hopf algebra of Dyck paths'' defined in~\cite[Section 6]{GP16} as it relates to the content of this paper.

Given a partial order $\pi$ of $[n]$, the \emph{incomparability graph} of $\pi$ is the graph
\[
\operatorname{inc}(\pi) = ([n], E_{\pi}) \qquad\text{where}\qquad E_{\pi} = \{\{i, j\} \subseteq [n] \;|\; \text{neither $(i, j)$ nor $(j, i) \in \pi$}\}.
\]
For $\pi \in \NNN\OOO$, the graph $\operatorname{inc}(\pi)$ is known as an \emph{indifference graph}.  
The underlying free graded $\CC[t]$-module for Guay-Paquet's Hopf algebra is
\[
\mathsf{IG}(t) = \bigoplus_{n \ge 0} \mathsf{IG}_{n}(t) \qquad\text{with}\qquad \mathsf{IG}_{n}(t) = \CC[t]\spanning\{ \operatorname{inc}(\pi) \;|\; \pi \in \natorder_{n} \},
\]
where $ \operatorname{inc}(\pi)$ is treated as a formal basis element.

In~\cite{GP16}, the structure maps of $\mathsf{IG}(t)$ are defined in terms of operations on incomparability graphs.  To simplify the statement of future results, I will instead give an equivalent description in terms of natural unit interval orders.  Recall the ordinal sum and restriction operations defined in Section~\ref{sec:monoidexamples}, and the canonicalization map defined in Section~\ref{sec:Fockfunctors}.  

For $n, m \ge 0$, $\pi \in \natorder_{n}$ and $\rho \in \natorder_{m}$, the \emph{shifted ordinal sum} of $\pi$ and $\rho$ is
\[
\pi \ordinalsumsh{i}
\rho = \pi \ordinalsum \cano_{[n+m] \setminus [n]}^{-1}(\rho),
\]
which is a natural unit interval order of $[n+m]$.

\begin{ex}
Let $\pi = \tikz[scale = 0.5, baseline = 0.15cm]{
\draw[fill] (0, 0) circle (2pt) node[inner sep = 1pt] (1) {};
\draw[fill] (0.5, 0.5) circle (2pt) node[inner sep = 1pt] (2) {};
\draw[fill] (-0.5, 0.75) circle (2pt) node[inner sep = 1pt] (3) {};
\draw[fill] (0.5, 1.125) circle (2pt) node[inner sep = 1pt] (4) {};
\draw (1) node[below] {$\scriptstyle 1$};
\draw (2) node[right] {$\scriptstyle 2$};
\draw (3) node[left] {$\scriptstyle 3$};
\draw (4) node[right] {$\scriptstyle 4$};
\draw (1) -- (2);
\draw (1) -- (3);
\draw (2) -- (4);}$ and $\rho = \tikz[scale = 0.5, baseline = -0.1cm]{
\draw[fill] (0, 0) circle (2pt) node[below] {$\scriptstyle 1$};
\draw[fill] (1, 0) circle (2pt) node[below] {$\scriptstyle 2$};}$.  Then $\pi \ordinalsumsh{4} \rho = \tikz[scale = 0.5, baseline = 0.5*0.7cm]{
\draw[fill] (0, 0) circle (2pt) node[inner sep = 1pt] (1) {};
\draw[fill] (0.5, 0.5) circle (2pt) node[inner sep = 1pt] (2) {};
\draw[fill] (-0.5, 0.75) circle (2pt) node[inner sep = 1pt] (3) {};
\draw[fill] (0.5, 1.125) circle (2pt) node[inner sep = 1pt] (4) {};
\draw[fill] (-0.5, 1.75) circle (2pt) node[inner sep = 1pt] (5) {};
\draw[fill] (0.5, 1.75) circle (2pt) node[inner sep = 1pt] (6) {};
\draw (1) node[below] {$\scriptstyle 1$};
\draw (2) node[right] {$\scriptstyle 2$};
\draw (3) node[left] {$\scriptstyle 3$};
\draw (4) node[right] {$\scriptstyle 4$};
\draw (5) node[left] {$\scriptstyle 5$};
\draw (6) node[right] {$\scriptstyle 6$};
\draw (1) -- (2);
\draw (1) -- (3);
\draw (2) -- (4);
\draw (3) -- (5);
\draw (3) -- (6);
\draw (4) -- (5);
\draw (4) -- (6);}$.
\end{ex}

For $n \ge 0$, $\pi \in \natorder_{n}$, and $I \subseteq [n]$, and define the \emph{shifted restriction} of $\pi$ to be
\[
\pi|^{sh}_{I} = \cano_{I}(\pi|_{I}),
\]
which is also a natural unit interval order of $[|I|]$ that is isomorphic as a poset to $\pi|_{I}$.  Furthermore, define the \emph{$I$-ascents} of $\pi \in \natorder_{n}$ to be
\[
\asc_{I}(\pi) = |\{ (i, j) \in I \times I^{c} \;|\; \text{$1 \le i < j \le n$ and $(i, j) \notin \pi$} \}|.
\]

\begin{ex}
Let $\pi = \tikz[scale = 0.5, baseline = 0.5*0.7cm]{
\draw[fill] (0, 0) circle (2pt) node[inner sep = 1pt] (1) {};
\draw[fill] (0.5, 0.5) circle (2pt) node[inner sep = 1pt] (2) {};
\draw[fill] (-0.5, 0.75) circle (2pt) node[inner sep = 1pt] (3) {};
\draw[fill] (0.5, 1.125) circle (2pt) node[inner sep = 1pt] (4) {};
\draw[fill] (-0.5, 1.75) circle (2pt) node[inner sep = 1pt] (5) {};
\draw[fill] (0.5, 1.75) circle (2pt) node[inner sep = 1pt] (6) {};
\draw (1) node[below] {$\scriptstyle 1$};
\draw (2) node[right] {$\scriptstyle 2$};
\draw (3) node[left] {$\scriptstyle 3$};
\draw (4) node[right] {$\scriptstyle 4$};
\draw (5) node[left] {$\scriptstyle 5$};
\draw (6) node[right] {$\scriptstyle 6$};
\draw (1) -- (2);
\draw (1) -- (3);
\draw (2) -- (4);
\draw (3) -- (5);
\draw (3) -- (6);
\draw (4) -- (5);
\draw (4) -- (6);}$. Then $\pi|_{\{2, 3, 5, 6\}}^{sh} = \tikz[scale = 0.5, baseline = 0.5*1.05cm]{
\draw[fill] (0.5, 0.75) circle (2pt) node[inner sep = 1pt] (2) {};
\draw[fill] (-0.5, 0.75) circle (2pt) node[inner sep = 1pt] (3) {};
\draw[fill] (-0.5, 1.75) circle (2pt) node[inner sep = 1pt] (5) {};
\draw[fill] (0.5, 1.75) circle (2pt) node[inner sep = 1pt] (6) {};
\draw (2) node[right] {$\scriptstyle 1$};
\draw (3) node[left] {$\scriptstyle 2$};
\draw (5) node[left] {$\scriptstyle 3$};
\draw (6) node[right] {$\scriptstyle 4$};
\draw (3) -- (5);
\draw (3) -- (6);
\draw (2) -- (5);
\draw (2) -- (6);}$ and $\asc_{\{2, 3, 5, 6\}}(\pi) = |\{(3, 4)\}| = 1$.
\end{ex}

The product map for the Hopf algebra $\mathsf{IG}(t)$ is the graded $\CC[t]$-linear map defined by
\[
\mu(\operatorname{inc}(\pi) \otimes \operatorname{inc}(\rho)) = \operatorname{inc}(\pi \ordinalsumsh{n} \rho)
\]
for $\pi \in \natorder_{n}$ and $\rho \in \natorder_{m}$ with $n, m \ge 0$, and the coproduct is the graded $\CC[t]$-linear map defined by
\[
\Delta(\operatorname{inc}(\pi)) = \sum_{I \subseteq [n]} t^{\asc_{I}(\pi)} \operatorname{inc}(\pi|^{sh}_{I}) \otimes \operatorname{inc}(\pi|^{sh}_{I^{c}})
\]
for $\pi \in \natorder_{n}$.

\begin{thm}[{\cite[Corollary~77]{GP16}}]
With the product and coproduct defined above, $\mathsf{IG}(t)$ is a graded connected $\CC[t]$-Hopf algebra.
\end{thm}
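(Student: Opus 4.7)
The plan is to verify the three Hopf algebra axioms—associativity, coassociativity, and the compatibility axiom—directly from the definitions of $\mu$ and $\Delta$, and then observe that unit, counit, and antipode are forced. The unit is $\operatorname{inc}(\emptyset) \in \mathsf{IG}_{0}(t) = \CC[t]$, and the counit $\epsilon\colon \mathsf{IG}(t) \to \CC[t]$ is the projection onto the degree-zero component, sending $\operatorname{inc}(\emptyset) \mapsto 1$ and killing higher-degree basis elements. Once the bialgebra axioms are verified, the antipode exists automatically by the standard recursion for graded connected bialgebras.

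Associativity of $\mu$ is immediate from associativity of the shifted ordinal sum $\ordinalsumsh{n}$ of natural unit interval orders, which in turn follows from associativity of the ordinal sum established in Section~\ref{sec:monoidexamples}. The counit axiom is a direct check: the only terms of $\Delta(\operatorname{inc}(\pi))$ surviving under $\epsilon \otimes \id$ (resp.\ $\id \otimes \epsilon$) correspond to $I = \emptyset$ (resp.\ $I = [n]$), and both have trivial $t$-weight since $\asc_{\emptyset}(\pi) = \asc_{[n]}(\pi) = 0$.

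For coassociativity, the plan is to expand both $(\Delta \otimes \id) \circ \Delta(\operatorname{inc}(\pi))$ and $(\id \otimes \Delta) \circ \Delta(\operatorname{inc}(\pi))$ as sums indexed by ordered decompositions $[n] = A_{1} \sqcup A_{2} \sqcup A_{3}$. The graph factors on each side agree because iterated shifted restriction $(\pi|^{sh}_{I})|^{sh}_{J'}$ is naturally identified (via the canonicalization map) with $\pi|^{sh}_{J}$ for the corresponding $J \subseteq I$. The $t$-exponents must then be compared: on the $(\Delta \otimes \id) \circ \Delta$ side one gets $\asc_{A_{1} \sqcup A_{2}}(\pi) + \asc_{A_{1}}(\pi|^{sh}_{A_{1} \sqcup A_{2}})$, while on the $(\id \otimes \Delta) \circ \Delta$ side one gets $\asc_{A_{1}}(\pi) + \asc_{A_{2}}(\pi|^{sh}_{A_{1}^{c}})$. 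Decomposing each $\asc$ into contributions from pairs in $A_{i} \times A_{j}$, both totals reduce to the same sum of non-$\pi$-related pairs across $A_{1} \times A_{2}$, $A_{1} \times A_{3}$, and $A_{2} \times A_{3}$, so coassociativity holds.

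For compatibility ($\Delta$ is an algebra map), I would use the bijection between subsets $K \subseteq [n+m]$ and pairs $(I, J) \in 2^{[n]} \times 2^{[m]}$ given by $I = K \cap [n]$ and $J = \cano_{[n+m] \setminus [n]}(K \setminus [n])$. Two identities drive the proof: first, the shifted restriction factors as $(\pi \ordinalsumsh{n} \rho)|^{sh}_{K} = \pi|^{sh}_{I} \ordinalsumsh{|I|} \rho|^{sh}_{J}$, which follows because the ordinal sum places every element from $\rho$ strictly above every element of $\pi$ and canonicalization preserves this separation. Second, the ascent count is additive, $\asc_{K}(\pi \ordinalsumsh{n} \rho) = \asc_{I}(\pi) + \asc_{J}(\rho)$, because any cross pair $(i, j)$ with $i \in I$ and $j \in K^{c} \cap ([n+m] \setminus [n])$ (or with swapped position in $K$) automatically lies in the ordinal sum, hence contributes no ascent. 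Combining these identities, both $\Delta \circ \mu(\operatorname{inc}(\pi) \otimes \operatorname{inc}(\rho))$ and the twisted product $(\mu \otimes \mu) \circ (\id \otimes \beta \otimes \id) \circ (\Delta \otimes \Delta)(\operatorname{inc}(\pi) \otimes \operatorname{inc}(\rho))$ expand into the same sum over $(I, J)$. The main obstacle in this program is the careful bookkeeping of $t$-weights in coassociativity; the combinatorial identifications of incomparability graphs are routine, but the ascent-count accounting is where a misstep is most likely, so I would execute it by partitioning products $A_{i} \times A_{j}$ and comparing the two totals term by term.
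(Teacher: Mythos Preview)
Your direct verification is correct: the associativity of the shifted ordinal sum, the additivity of the ascent count under both iterated restriction and under products (because cross pairs between $[n]$ and $[n+m]\setminus[n]$ automatically belong to the ordinal sum), and the matching of shifted restrictions all go through as you describe. The ascent bookkeeping in coassociativity that you flag as the delicate point does indeed reduce to the same triple sum over $A_1\times A_2$, $A_1\times A_3$, $A_2\times A_3$ on both sides.

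The paper, however, does not prove this theorem at all: it is stated with attribution to \cite[Corollary~77]{GP16} and simply quoted as a known result. So there is no ``paper's own proof'' to compare against. Your approach supplies what the paper omits --- a self-contained combinatorial verification of the bialgebra axioms --- whereas the paper treats $\mathsf{IG}(t)$ as an off-the-shelf object and focuses its effort on the new content, namely the isomorphism with $\scf(\UT_\bullet)$ in Theorem~\ref{thm:Dhom}. Guay-Paquet's original proof in \cite{GP16} goes through a Hopf-monoid construction (a $t$-deformation of the Hopf monoid of graphs), which is closer in spirit to how the present paper treats $\cfUT$; your argument is the more elementary route and has the advantage of being entirely internal to the $\CC[t]$-module $\mathsf{IG}(t)$.
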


As a consequence, for any complex number $\alpha \in \CC$ there is a graded connected $\CC$-Hopf algebra $\mathsf{IG}(\alpha)$ given by evaluating $t = \alpha$ in the formulas above.

\subsection{Realizing $\mathsf{IG}(q^{-1})$ as a sub-Hopf algebra of $\cf(\UT_{\bullet})$}
\label{sec:scfiso}

The main result of this section follows; recall $\permind_{\pi} \in \scf(\UT_{n}) \subseteq \cf(\UT_{n})$ as defined in Section~\ref{sec:scf} and the Hopf algebra $\mathsf{IG}(t)$ defined in Section~\ref{sec:GPHopf}.

\begin{thm}
\label{thm:Dhom}
Recall that $q$ is the order of $\FF_{q}$.  The graded linear map
\[
\begin{array}{rcl}
\mathsf{IG}(q^{-1}) & \to & \cf(\UT_{\bullet}) \\
\operatorname{inc}(\pi) & \mapsto & \permind_{\pi}
\end{array}
\]
is an injective Hopf algebra homomorphism.
\end{thm}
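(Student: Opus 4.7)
The plan is to verify that the prescribed map intertwines the product and coproduct, which reduces to two direct computations, together with a standard injectivity argument. Note that $\{\permind_{\pi} \mid \pi \in \natorder_{n}\}$ was identified in Section~\ref{sec:scf} as a basis of $\scf(\UT_{n})$, so the map carries the natural basis of $\mathsf{IG}(q^{-1})$ to a linearly independent set and is automatically injective; the substantive content is to check multiplicativity and comultiplicativity.

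For the product, fix $\pi \in \natorder_{n}$ and $\rho \in \natorder_{m}$, and let $\rho' = \cano_{[n+m]\setminus[n]}^{-1}(\rho)$ so that $\pi \ordinalsumsh{n} \rho = \pi \ordinalsum \rho'$. Under $\st_{([n],[n]^{c})}^{-1}$ the tensor $\permind_{\pi} \otimes \permind_{\rho}$ corresponds to the indicator function of $\UT(\pi) \oplus \UT(\rho') \subseteq \UL_{[n]}$, and $\infl^{\UT_{n+m}}_{\UL_{[n]}}$ returns the indicator of the preimage under the quotient $\UP_{[n]} = \UL_{[n]} \ltimes \UR_{[n]} \to \UL_{[n]}$, namely $(\UT(\pi) \oplus \UT(\rho')) \cdot \UR_{[n]}$. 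Since $\UR_{[n]}$ is parameterized by arbitrary entries at the positions $[n] \times ([n+m] \setminus [n])$---precisely the new relations that the ordinal sum adds to $\pi \sqcup \rho'$---a subgroup order comparison identifies this product set with $\UT(\pi \ordinalsumsh{n} \rho)$, giving $\mu(\permind_{\pi} \otimes \permind_{\rho}) = \permind_{\pi \ordinalsumsh{n} \rho}$.

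For the coproduct, fix $\pi \in \natorder_{n}$ and $I \subseteq [n]$. The restriction $\res^{\UT_{n}}_{\UP_{I}}(\permind_{\pi})$ is the indicator of $\UT(\pi) \cap \UP_{I}$, and for $g \in \UL_{I}$ the deflation equals $|\UR_{I}|^{-1}\,|\{u \in \UR_{I} : gu \in \UT(\pi)\}|$. The key point is that Lemma~\ref{lem:patterngroupnormalcy} combined with the NUI hypothesis gives $\UT(\pi) \trianglelefteq \UT_{n}$, which I would use to show that $gu \in \UT(\pi)$ is possible if and only if $g \in \UT(\pi) \cap \UL_{I} = \UT(\pi|_{I}) \oplus \UT(\pi|_{I^{c}})$, in which case the set of admissible $u$ is a coset of $\UT(\pi) \cap \UR_{I}$ and hence has cardinality $q^{|\pi \cap (I \times I^{c}) \cap \text{upper}|}$, independent of $g$. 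Dividing by $|\UR_{I}| = q^{|(I \times I^{c}) \cap \text{upper}|}$ yields the scalar $q^{-\asc_{I}(\pi)}$, and applying $\st_{(I,I^{c})}$ produces $q^{-\asc_{I}(\pi)} \, \permind_{\pi|_{I}^{sh}} \otimes \permind_{\pi|_{I^{c}}^{sh}}$. Summing over $I$ exactly reproduces the coproduct of $\mathsf{IG}(q^{-1})$ evaluated at $\operatorname{inc}(\pi)$.

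The main obstacle is the coproduct step: showing that the deflation of $\permind_{\pi}\vert_{\UP_{I}}$ collapses to a scalar multiple of an indicator function. This is the one place the NUI hypothesis is used in an essential way, via the normality $\UT(\pi) \trianglelefteq \UT_{n}$; without it the count $|\{u \in \UR_{I} : gu \in \UT(\pi)\}|$ could depend on $g$ in a way that does not factor through a single pattern subgroup of $\UL_{I}$, and the map to $\mathsf{IG}(q^{-1})$ would not be available. Once this collapse is established, matching the scalar with $\asc_{I}(\pi)$ is a combinatorial bookkeeping exercise comparing $|(I \times I^{c}) \cap \text{upper}|$ against $|\pi \cap (I \times I^{c}) \cap \text{upper}|$.
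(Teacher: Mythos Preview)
Your proposal is correct and follows essentially the same route as the paper: check the product by identifying $(\UT(\pi)\oplus\UT(\rho'))\,\UR_{[n]}$ with $\UT(\pi \ordinalsumsh{n}\rho)$, and check the coproduct by showing that $\{u\in\UR_I : gu\in\UT(\pi)\}$ is empty unless $g\in\UT(\pi)\cap\UL_I$, in which case it equals $\UT(\pi)\cap\UR_I$. One small refinement: that factorization ``$gu\in\UT(\pi)\Longleftrightarrow g\in\UT(\pi)$ and $u\in\UT(\pi)$'' follows directly from the entry-wise block decomposition $\UP_I=\UL_I\ltimes\UR_I$ together with $\UT(\pi)$ being a subgroup, and holds for any pattern subgroup---so normality is not actually needed at that point; the essential role of the NUI hypothesis (via Lemma~\ref{lem:patterngroupnormalcy}) is earlier, in guaranteeing that $\permind_\pi$ is a class function in the first place.
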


The proof of Theorem~\ref{thm:Dhom} follows the next corollary.

\begin{cor}
\label{cor:scfsubhopf}
The space
\[
\scf(\UT_{\bullet}) = \bigoplus_{n \ge 0} \scf(\UT_{n})
\]
is a sub-Hopf algebra of $\cf(\UT_{\bullet})$, and moreover isomorphic to $\mathsf{IG}(q^{-1})$.  In particular, 
\[
\mu(\permind_{\pi} \otimes \permind_{\rho}) = \permind_{\pi \ordinalsumsh{n} \rho}
\qquad\text{and}\qquad
\Delta(\permind_{\pi}) = \sum_{I \subseteq [n]} q^{-\asc_{I}(\pi)} \permind_{\pi|^{sh}_{I}} \otimes \permind_{\pi|^{sh}_{I^{c}}}
\]
for $\pi \in \natorder_{n}$ and $\rho \in \natorder_{m}$ with $n, m \ge 0$.
\end{cor}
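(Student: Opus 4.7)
The plan is to derive the corollary as a direct consequence of Theorem~\ref{thm:Dhom}, which has just been asserted (its proof appears below). Write $\Phi\colon \mathsf{IG}(q^{-1}) \to \cf(\UT_{\bullet})$ for the injective graded Hopf algebra homomorphism sending $\operatorname{inc}(\pi) \mapsto \permind_{\pi}$ provided by that theorem.

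The first step is to identify $\operatorname{image}(\Phi)$ with $\scf(\UT_{\bullet})$. By construction, this image is the $\CC$-linear span of $\{\permind_{\pi} : \pi \in \natorder_{n},\; n \ge 0\}$. Section~\ref{sec:scf} records both that $\{\permchar^{\pi}\}_{\pi \in \natorder_{n}}$ is a basis of $\scf(\UT_{n})$ and that $\permind_{\pi} = q^{-|\UT_{n}:\UT(\pi)|}\permchar^{\pi}$. Since these two families differ only by nonzero scalars, $\{\permind_{\pi}\}_{\pi \in \natorder_{n}}$ is also a basis of $\scf(\UT_{n})$, and hence $\operatorname{image}(\Phi) = \scf(\UT_{\bullet})$ as graded subspaces. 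Because the image of a Hopf algebra homomorphism is automatically a sub-Hopf algebra of the codomain, this yields the first assertion of the corollary.

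Combining the injectivity of $\Phi$ with this identification of its image then furnishes a bijective graded Hopf algebra homomorphism $\mathsf{IG}(q^{-1}) \xrightarrow{\;\cong\;} \scf(\UT_{\bullet})$, proving the isomorphism claim. The explicit structure constants are obtained by transporting the product and coproduct formulas for $\mathsf{IG}(t)$ given in Section~\ref{sec:GPHopf} across this isomorphism and specializing $t = q^{-1}$: the rule $\mu(\operatorname{inc}(\pi) \otimes \operatorname{inc}(\rho)) = \operatorname{inc}(\pi \ordinalsumsh{n} \rho)$ transports to the stated multiplication on the $\permind$ basis, and the coproduct rule transports analogously to the displayed formula involving $q^{-\asc_{I}(\pi)}$.

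I do not anticipate any serious obstacle in the corollary itself; all of the substantive representation-theoretic and combinatorial work has been deferred to the proof of Theorem~\ref{thm:Dhom}. The only verification worth flagging is the basis identification above, which is immediate from the observation that $\permind_{\pi}$ and $\permchar^{\pi}$ differ by a nonzero scalar for each $\pi \in \natorder_{n}$.
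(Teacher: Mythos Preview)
Your proposal is correct and matches the paper's approach: the paper states the corollary without a separate proof, treating it as an immediate consequence of Theorem~\ref{thm:Dhom}, and your write-up simply spells out the routine deductions (image identification via the scalar relation $\permind_{\pi} = q^{-|\UT_{n}:\UT(\pi)|}\permchar^{\pi}$, transport of structure constants) that the paper leaves implicit.
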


\begin{proof}[Proof of Theorem~\ref{thm:Dhom}]
By definition, the given map is an injective graded linear map.  To show that it is an algebra homomorphism, it is sufficient to show that for $\pi \in \natorder_{n}$ and $\rho \in \natorder_{m}$ with $n, m \ge 0$, 
\[
\infl^{\UT_{n+m}}_{\UL_{[n]}} \circ \st_{([n], [n+m] \setminus [n])}^{-1}(\permind_{\pi} \otimes \permind_{\rho}) = \permind_{\pi \ordinalsumsh{n} \rho}.
\]
This follows from direct computation: for $g \in \UT_{n + m}$, 
\[
\infl^{\UT_{n+m}}_{\UL_{[n]}} \circ \st_{([n], [n+m] \setminus [n])}^{-1}(\permind_{\pi} \otimes \permind_{\rho})(g) = \begin{cases} 1 & \text{if $g \in \big(\UT(\pi) \oplus {}^{\cano_{[n+m] \setminus [n]}^{-1}} \UT(\rho)\big) \UR_{[n]}$} \\ 0 & \text{otherwise,}  \end{cases}
\]
and $(\UT(\pi) \oplus {}^{\cano_{[n+m] \setminus [n]}^{-1}} \UT(\rho))\UR_{[n]} = \UT(\pi \ordinalsumsh{n} \rho)$.

Now consider the claim that the given map is a coalgebra homomorphism.  To establish this, it is sufficient to show that for $\pi \in \natorder_{n}$ and $I \subseteq [n]$ with $n \ge 0$, 
\[
\st_{(I, I^{c})} \circ \resf^{\UT_{n}}_{\UL_{I}}(\permind_{\pi}) = q^{-\asc_{I}(\pi)} \permind_{\pi|^{sh}_{I}} \otimes \permind_{\pi|^{sh}_{I^{c}}},
\]
where $I^{c} = [n] \setminus I$ as usual.  For $g_{1} \in \UT_{|I|}$ and $g_{2} \in \UT_{|I^{c}|}$, 
\[
\st_{(I, I^{c})} \circ \resf^{\UT_{n}}_{\UL_{I}}(\permind_{\pi})(g_{1} \times g_{2}) = \frac{|\{  \UT(\pi) \cap ({}^{\cano_{I}^{-1}} g_{1} \oplus {}^{\cano_{I^{c}}^{-1}} g_{2})\UR_{I}  \}|}{|\UR_{I}|}.
\]
For each $x \in \UR_{[n]}$, $({}^{\cano_{I}^{-1}} g_{1} \oplus {}^{\cano_{I^{c}}^{-1}} g_{2})x \in \UT(\pi)$ if and only if both $x$ and ${}^{\cano_{I}^{-1}} g_{1} \oplus {}^{\cano_{I^{c}}^{-1}} g_{2}$ are contained in $\UT(\pi)$.  Moreover, the condition ${}^{\cano_{I}^{-1}} g_{1}  \oplus {}^{\cano_{I^{c}}^{-1}} g_{2} \in \UT(\pi)$ is equivalent to the simultaneous conditions 
\[
(g_{1}-1_{|I|})_{r, s} \neq 0 \qquad\text{only if}\qquad(\cano_{I}^{-1}(r), \cano_{I}^{-1}(s)) \in \pi
\]
and
\[
(g_{2}-1_{|I^{c}|})_{r, s} \neq 0 \qquad\text{only if}\qquad(\cano_{I^{c}}^{-1}(r), \cano_{I^{c}}^{-1}(s)) \in \pi,
\]
or in other terms, $g_{1} \times g_{2} \in \UT(\pi|^{sh}_{I}) \times \UT(\pi|^{sh}_{I^{c}})$, so 
\[
\st_{(I, I^{c})} \circ \resf^{\UT_{n}}_{\UL_{I}}(\permind_{\pi}) = \frac{|\UT(\pi) \cap \UR_{I}|}{|\UR_{I}|} \permind_{\pi|^{sh}_{I}} \otimes \permind_{\pi|^{sh}_{I^{c}}}.
\]
Finally, $|\UT(\pi) \cap \UR_{I}| = |\UR_{I}|\big/ q^{\asc_{I}(\pi)}$, so the claimed coproduct formula holds.
\end{proof}

\subsection{Nonommutativity and noncocommutativity}
\label{sec:noncocommutativity}

The final result makes use of the structure constants in Corollary~\ref{cor:scfsubhopf} to show that the Hopf algebras $\scf(\UT_{\bullet})$ and $\cf(\UT_{\bullet})$ are distinct from those in~\cite{AgBerTh} and~\cite{AlTh20}, respectively.  To the best of my knowledge the noncocommutativity part of this result is novel even when interpreted as a statement about $\mathsf{IG}(t)$, though it is likely known to the author of~\cite{GP16}.

\begin{thm}
\label{thm:noncocomm}
The Hopf algebras $\cf(\UT_{\bullet})$ and $\scf(\UT_{\bullet})$ are noncommutative and noncocommutative.
\end{thm}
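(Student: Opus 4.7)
The plan is to work inside the sub-Hopf algebra $\scf(\UT_{\bullet})$, where Corollary~\ref{cor:scfsubhopf} provides explicit product and coproduct formulas on the identifier basis $\{\permind_{\pi} : \pi \in \natorder_{n},\, n \ge 0\}$.  Since $\scf(\UT_{\bullet})$ is a sub-Hopf algebra of $\cf(\UT_{\bullet})$, any counterexample found here immediately implies the analogous failure in $\cf(\UT_{\bullet})$.

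For noncommutativity, let $\pi \in \natorder_{1}$ be the unique element and $\rho \in \natorder_{2}$ the antichain.  The product formula of Corollary~\ref{cor:scfsubhopf} gives $\mu(\permind_{\pi} \otimes \permind_{\rho}) = \permind_{\pi \ordinalsumsh{1} \rho}$ and $\mu(\permind_{\rho} \otimes \permind_{\pi}) = \permind_{\rho \ordinalsumsh{2} \pi}$.  A direct check shows these are distinct elements of $\natorder_{3}$---with above-diagonal relations $\{(1,2),(1,3)\}$ and $\{(1,3),(2,3)\}$ respectively---so the two images are linearly independent.

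For noncocommutativity, the aim is to locate a single pair $(X, Y)$ of orders whose coefficient in $\Delta(\permind_{\pi})$ differs from that of $(Y, X)$.  Take $\pi \in \natorder_{4}$ with above-diagonal relations exactly $(1,4)$ and $(2,4)$; let $R \in \natorder_{3}$ denote the order with sole above-diagonal relation $(1,3)$ and $e$ the unique element of $\natorder_{1}$.  Direct enumeration using the coproduct formula of Corollary~\ref{cor:scfsubhopf} yields: the size-three subsets $I \subseteq [4]$ with $\pi|^{sh}_{I} = R$ are $\{1,3,4\}$ and $\{2,3,4\}$, on which $\asc_{I}(\pi) = 1$ and $0$, giving coefficient $1 + q^{-1}$ for $\permind_{R} \otimes \permind_{e}$ in $\Delta(\permind_{\pi})$; meanwhile the singletons $I$ with $\pi|^{sh}_{I^{c}} = R$ are $\{1\}$ and $\{2\}$ with $\asc_{I}(\pi) = 2$ and $1$, giving coefficient $q^{-1} + q^{-2}$ for $\permind_{e} \otimes \permind_{R}$.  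Since $1 + q^{-1} \ne q^{-1} + q^{-2}$ for every prime power $q$, we conclude $\Delta(\permind_{\pi}) \ne \beta \Delta(\permind_{\pi})$, where $\beta$ swaps the two tensor factors.

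The main obstacle is the noncocommutativity step: the $q$-weighted sums carry enough hidden symmetry that many natural choices of $\pi$ still give cocommutative coproducts---this appears to be forced whenever $\pi$ is fixed by the order-reversing involution $i \mapsto n+1-i$, and many non-self-dual examples on $[3]$ and $[4]$ also turn out to produce matching multisets $\{\asc_{I}(\pi)\}$ and $\{\asc_{I^{c}}(\pi)\}$ once summed over the relevant $I$.  The $\pi$ above is engineered so that its three non-$\pi$ ascents $(1,2), (1,3), (2,3)$ cluster at the bottom of $[4]$, which is precisely what breaks the symmetric balance of $\asc$-values between $I$ and $I^{c}$.
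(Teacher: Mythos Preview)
Your proof is correct and follows essentially the same approach as the paper's: both use the identical examples in $\scf(\UT_{\bullet})$ (the singleton and the antichain on $[2]$ for noncommutativity, and the order on $[4]$ with nontrivial relations $(1,4),(2,4)$ for noncocommutativity), and both detect noncocommutativity by comparing the coefficients $1+q^{-1}$ and $q^{-1}+q^{-2}$ of $\permind_{R}\otimes\permind_{e}$ versus $\permind_{e}\otimes\permind_{R}$. The only difference is presentational---the paper writes out the full coproduct before isolating the $(3,1)$ and $(1,3)$ components, whereas you target the relevant coefficients directly.
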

\begin{proof}
It is enough to show that $\scf(\UT_{\bullet})$ is noncommutative and noncocommutative. This can be established by example, with
\[
\mu\left(\permind_{\begin{tikzpicture}[scale = 0.35, baseline = 0.35*-0.2cm]
\draw (0, 0) node[inner sep = 0.05cm] (1) {$\scriptscriptstyle 1$};
\end{tikzpicture}}
\otimes
\permind_{\begin{tikzpicture}[scale = 0.35, baseline = 0.35*-0.2cm]
\draw (0, 0) node[inner sep = 0.05cm] (1) {$\scriptscriptstyle 1$};
\draw (1, 0) node[inner sep = 0.05cm] (2) {$\scriptscriptstyle 2$};
\end{tikzpicture}} \right)
= 
\permind_{\begin{tikzpicture}[scale = 0.35, baseline = 0.35*0.3cm]
\draw (0.5, 0) node[inner sep = 0.05cm] (1) {$\scriptscriptstyle 1$};
\draw (0, 1) node[inner sep = 0.05cm] (2) {$\scriptscriptstyle 2$};
\draw (1, 1) node[inner sep = 0.05cm] (3) {$\scriptscriptstyle 3$};
\draw (1) -- (2);
\draw (1) -- (3);
\end{tikzpicture}}
\neq
\permind_{\begin{tikzpicture}[scale = 0.35, baseline = 0.35*0.3cm]
\draw (0, 0) node[inner sep = 0.05cm] (1) {$\scriptscriptstyle 1$};
\draw (1, 0) node[inner sep = 0.05cm] (2) {$\scriptscriptstyle 2$};
\draw (0.5, 1) node[inner sep = 0.05cm] (3) {$\scriptscriptstyle 3$};
\draw (1) -- (3);
\draw (2) -- (3);
\end{tikzpicture}}
= 
\mu \left( \permind_{\begin{tikzpicture}[scale = 0.35, baseline = 0.35*-0.2cm]
\draw (0, 0) node[inner sep = 0.05cm] (1) {$\scriptscriptstyle 1$};
\draw (1, 0) node[inner sep = 0.05cm] (2) {$\scriptscriptstyle 2$};
\end{tikzpicture}} 
\otimes
\permind_{\begin{tikzpicture}[scale = 0.35, baseline = 0.35*-0.2cm]
\draw (0, 0) node[inner sep = 0.05cm] (1) {$\scriptscriptstyle 1$};
\end{tikzpicture}} \right)
\]
establishing noncommutativity, and 
\begin{multline*}
\Delta\left(  \permind_{\begin{tikzpicture}[scale = 0.35, baseline = 0.35*0.3cm]
\draw (0, 0) node[inner sep = 0.05cm] (1) {$\scriptscriptstyle 1$};
\draw (1, 0) node[inner sep = 0.05cm] (2) {$\scriptscriptstyle 2$};
\draw (1.75, 0.5) node[inner sep = 0.05cm] (3) {$\scriptscriptstyle 3$};
\draw (0.5, 1) node[inner sep = 0.05cm] (4) {$\scriptscriptstyle 4$};
\draw (1) -- (4);
\draw (2) -- (4);
\end{tikzpicture}} \right) 
= \underbrace{ \permind_{\begin{tikzpicture}[scale = 0.35, baseline = 0.35*0.3cm]
\draw (0, 0) node[inner sep = 0.05cm] (1) {$\scriptscriptstyle 1$};
\draw (1, 0) node[inner sep = 0.05cm] (2) {$\scriptscriptstyle 2$};
\draw (1.75, 0.5) node[inner sep = 0.05cm] (3) {$\scriptscriptstyle 3$};
\draw (0.5, 1) node[inner sep = 0.05cm] (4) {$\scriptscriptstyle 4$};
\draw (1) -- (4);
\draw (2) -- (4);
\end{tikzpicture}}  \otimes  \permind_{\emptyset}}_{([4], \emptyset)} 
+  \underbrace{q^{-1}  \permind_{\begin{tikzpicture}[scale = 0.35, baseline = 0.35*-0.2cm]
\draw (0.25, 0) node[inner sep = 0.05cm] (1) {$\scriptscriptstyle 1$};
\draw (1, 0) node[inner sep = 0.05cm] (2) {$\scriptscriptstyle 2$};
\draw (1.75, 0) node[inner sep = 0.05cm] (3) {$\scriptscriptstyle 3$};
\end{tikzpicture}}  \otimes  \permind_{\begin{tikzpicture}[scale = 0.35, baseline = 0.35*-0.2cm]
\draw (0, 0) node[inner sep = 0.05cm] (1) {$\scriptscriptstyle 1$};
\end{tikzpicture}}}_{(\{1, 2, 3\}, \{4\})} 
+ \underbrace{q^{-2}  \permind_{\begin{tikzpicture}[scale = 0.35, baseline = 0.35*0.3cm]
\draw (0, 0) node[inner sep = 0.05cm] (1) {$\scriptscriptstyle 1$};
\draw (1, 0) node[inner sep = 0.05cm] (2) {$\scriptscriptstyle 2$};
\draw (0.5, 1) node[inner sep = 0.05cm] (4) {$\scriptscriptstyle 3$};
\draw (1) -- (4);
\draw (2) -- (4);
\end{tikzpicture}}  \otimes  \permind_{\begin{tikzpicture}[scale = 0.35, baseline = 0.35*-0.2cm]
\draw (0, 0) node[inner sep = 0.05cm] (1) {$\scriptscriptstyle 1$};
\end{tikzpicture}}}_{(\{1, 2, 4\}, \{3\})} 
+ \underbrace{q^{-1}   \permind_{\begin{tikzpicture}[scale = 0.35, baseline = 0.35*0.3cm]
\draw (0, 0) node[inner sep = 0.05cm] (1) {$\scriptscriptstyle 1$};
\draw (0.75, 0.5) node[inner sep = 0.05cm] (3) {$\scriptscriptstyle 2$};
\draw (0, 1) node[inner sep = 0.05cm] (4) {$\scriptscriptstyle 3$};
\draw (1) -- (4);
\end{tikzpicture}}  \otimes  \permind_{\begin{tikzpicture}[scale = 0.35, baseline = 0.35*-0.2cm]
\draw (0, 0) node[inner sep = 0.05cm] (1) {$\scriptscriptstyle 1$};
\end{tikzpicture}}}_{(\{1, 3, 4\}, \{2\})} 
+ \underbrace{  \permind_{\begin{tikzpicture}[scale = 0.35, baseline = 0.35*0.3cm]
\draw (1, 0) node[inner sep = 0.05cm] (2) {$\scriptscriptstyle 1$};
\draw (1.75, 0.5) node[inner sep = 0.05cm] (3) {$\scriptscriptstyle 2$};
\draw (1, 1) node[inner sep = 0.05cm] (4) {$\scriptscriptstyle 3$};
\draw (2) -- (4);
\end{tikzpicture}}  \otimes  \permind_{\begin{tikzpicture}[scale = 0.35, baseline = 0.35*-0.2cm]
\draw (0, 0) node[inner sep = 0.05cm] (1) {$\scriptscriptstyle 1$};
\end{tikzpicture}}}_{(\{2, 3, 4\}, \{1\})} 
\\[0.5em]
%
%
+ \underbrace{q^{-2}  \permind_{\begin{tikzpicture}[scale = 0.35, baseline = 0.35*-0.2cm]
\draw (0, 0) node[inner sep = 0.05cm] (1) {$\scriptscriptstyle 1$};
\draw (0.75, 0) node[inner sep = 0.05cm] (2) {$\scriptscriptstyle 2$};
\end{tikzpicture}} \otimes  \permind_{\begin{tikzpicture}[scale = 0.35, baseline = 0.35*-0.2cm]
\draw (0, 0) node[inner sep = 0.05cm] (3) {$\scriptscriptstyle 1$};
\draw (0.75, 0) node[inner sep = 0.05cm] (4) {$\scriptscriptstyle 2$};
\end{tikzpicture}}}_{(\{1, 2\}, \{3, 4\})}
+ \underbrace{q^{-2}   \permind_{\begin{tikzpicture}[scale = 0.35, baseline = 0.35*-0.2cm]
\draw (0, 0) node[inner sep = 0.05cm] (1) {$\scriptscriptstyle 1$};
\draw (0.75, 0) node[inner sep = 0.05cm] (3) {$\scriptscriptstyle 2$};
\end{tikzpicture}} \otimes  \permind_{\begin{tikzpicture}[scale = 0.35, baseline = 0.35*0.3cm]
\draw (0, 0) node[inner sep = 0.05cm] (2) {$\scriptscriptstyle 1$};
\draw (0, 1) node[inner sep = 0.05cm] (4) {$\scriptscriptstyle 2$};
\draw (2) -- (4);
\end{tikzpicture}}}_{(\{1, 3\}, \{2, 4\})}
+ \underbrace{q^{-2}  \permind_{\begin{tikzpicture}[scale = 0.35, baseline = 0.35*0.3cm]
\draw (0, 0) node[inner sep = 0.05cm] (1) {$\scriptscriptstyle 1$};
\draw (0, 1) node[inner sep = 0.05cm] (4) {$\scriptscriptstyle 2$};
\draw (1) -- (4);
\end{tikzpicture}} \otimes  \permind_{\begin{tikzpicture}[scale = 0.35, baseline = 0.35*-0.2cm]
\draw (0, 0) node[inner sep = 0.05cm] (2) {$\scriptscriptstyle 1$};
\draw (0.75, 0) node[inner sep = 0.05cm] (3) {$\scriptscriptstyle 2$};
\end{tikzpicture}}}_{(\{1, 4\}, \{2, 3\})}
+ \underbrace{q^{-1}   \permind_{\begin{tikzpicture}[scale = 0.35, baseline = 0.35*-0.2cm]
\draw (0, 0) node[inner sep = 0.05cm] (2) {$\scriptscriptstyle 1$};
\draw (0.75, 0) node[inner sep = 0.05cm] (3) {$\scriptscriptstyle 2$};
\end{tikzpicture}} \otimes  \permind_{\begin{tikzpicture}[scale = 0.35, baseline = 0.35*0.3cm]
\draw (0, 0) node[inner sep = 0.05cm] (1) {$\scriptscriptstyle 1$};
\draw (0, 1) node[inner sep = 0.05cm] (4) {$\scriptscriptstyle 2$};
\draw (1) -- (4);
\end{tikzpicture}}}_{(\{2, 3\}, \{1, 4\})}
+ \underbrace{q^{-1}   \permind_{\begin{tikzpicture}[scale = 0.35, baseline = 0.35*0.3cm]
\draw (1, 0) node[inner sep = 0.05cm] (2) {$\scriptscriptstyle 1$};
\draw (1, 1) node[inner sep = 0.05cm] (4) {$\scriptscriptstyle 2$};
\draw (2) -- (4);
\end{tikzpicture}} \otimes  \permind_{\begin{tikzpicture}[scale = 0.35, baseline = 0.35*-0.2cm]
\draw (0, 0) node[inner sep = 0.05cm] (1) {$\scriptscriptstyle 1$};
\draw (0.75, 0) node[inner sep = 0.05cm] (3) {$\scriptscriptstyle 2$};
\end{tikzpicture}}}_{(\{2, 4\}, \{1, 3\})} 
\\[0.5em]
%
%
+ \underbrace{ \permind_{\begin{tikzpicture}[scale = 0.35, baseline = 0.35*-0.2cm]
\draw (0, 0) node[inner sep = 0.05cm] (3) {$\scriptscriptstyle 1$};
\draw (0.75, 0) node[inner sep = 0.05cm] (4) {$\scriptscriptstyle 2$};
\end{tikzpicture}} \otimes  \permind_{\begin{tikzpicture}[scale = 0.35, baseline = 0.35*-0.2cm]
\draw (0, 0) node[inner sep = 0.05cm] (1) {$\scriptscriptstyle 1$};
\draw (0.75, 0) node[inner sep = 0.05cm] (2) {$\scriptscriptstyle 2$};
\end{tikzpicture}}}_{(\{3, 4\}, \{1, 2\})} 
+ \underbrace{q^{-2}    \permind_{\begin{tikzpicture}[scale = 0.35, baseline = 0.35*-0.2cm]
\draw (0, 0) node[inner sep = 0.05cm] (1) {$\scriptscriptstyle 1$};
\end{tikzpicture}} \otimes  \permind_{\begin{tikzpicture}[scale = 0.35, baseline = 0.35*0.3cm]
\draw (0, 0) node[inner sep = 0.05cm] (2) {$\scriptscriptstyle 1$};
\draw (0.75, 0.5) node[inner sep = 0.05cm] (3) {$\scriptscriptstyle 2$};
\draw (0, 1) node[inner sep = 0.05cm] (4) {$\scriptscriptstyle 3$};
\draw (2) -- (4);
\end{tikzpicture}}}_{(\{1\}, \{2, 3, 4\})}
+ \underbrace{q^{-1}  \permind_{\begin{tikzpicture}[scale = 0.35, baseline = 0.35*-0.2cm]
\draw (1, 0) node[inner sep = 0.05cm] (2) {$\scriptscriptstyle 1$};
\end{tikzpicture}} \otimes  \permind_{\begin{tikzpicture}[scale = 0.35, baseline = 0.35*0.3cm]
\draw (0, 0) node[inner sep = 0.05cm] (1) {$\scriptscriptstyle 1$};
\draw (0.75, 0.5) node[inner sep = 0.05cm] (3) {$\scriptscriptstyle 2$};
\draw (0, 1) node[inner sep = 0.05cm] (4) {$\scriptscriptstyle 3$};
\draw (1) -- (4);
\end{tikzpicture}}}_{(\{2\}, \{1, 3, 4\})}
+ \underbrace{q^{-1}  \permind_{\begin{tikzpicture}[scale = 0.35, baseline = 0.35*0.3cm]
\draw (1.75, 0.5) node[inner sep = 0.05cm] (3) {$\scriptscriptstyle 1$};
\end{tikzpicture}} \otimes  \permind_{\begin{tikzpicture}[scale = 0.35, baseline = 0.35*0.3cm]
\draw (0, 0) node[inner sep = 0.05cm] (1) {$\scriptscriptstyle 1$};
\draw (1, 0) node[inner sep = 0.05cm] (2) {$\scriptscriptstyle 2$};
\draw (0.5, 1) node[inner sep = 0.05cm] (4) {$\scriptscriptstyle 3$};
\draw (1) -- (4);
\draw (2) -- (4);
\end{tikzpicture}}}_{(\{3\}, \{1, 2, 4\})}
+ \underbrace{ \permind_{\begin{tikzpicture}[scale = 0.35, baseline = 0.35*-0.2cm]
\draw (0, 0) node[inner sep = 0.05cm] (4) {$\scriptscriptstyle 1$};
\end{tikzpicture}} \otimes  \permind_{\begin{tikzpicture}[scale = 0.35, baseline = 0.35*-0.2cm]
\draw (0.25, 0) node[inner sep = 0.05cm] (1) {$\scriptscriptstyle 1$};
\draw (1, 0) node[inner sep = 0.05cm] (2) {$\scriptscriptstyle 2$};
\draw (1.75, 0) node[inner sep = 0.05cm] (3) {$\scriptscriptstyle 3$};
\end{tikzpicture}}}_{(\{4\}, \{1, 2, 3\})} 
\\[0.5em]
%
%
+ \underbrace{ \permind_{\emptyset} \otimes  \permind_{\begin{tikzpicture}[scale = 0.35, baseline = 0.35*0.3cm]
\draw (0, 0) node[inner sep = 0.05cm] (1) {$\scriptscriptstyle 1$};
\draw (1, 0) node[inner sep = 0.05cm] (2) {$\scriptscriptstyle 2$};
\draw (1.75, 0.5) node[inner sep = 0.05cm] (3) {$\scriptscriptstyle 3$};
\draw (0.5, 1) node[inner sep = 0.05cm] (4) {$\scriptscriptstyle 4$};
\draw (1) -- (4);
\draw (2) -- (4);
\end{tikzpicture}}}_{(\emptyset, [4])},
\end{multline*}
demonstrating noncocommutativity: the $(3, 1)$-graded component
\[
\Delta_{(3, 1)}\left(  \permind_{\begin{tikzpicture}[scale = 0.35, baseline = 0.35*0.3cm]
\draw (0, 0) node[inner sep = 0.05cm] (1) {$\scriptscriptstyle 1$};
\draw (1, 0) node[inner sep = 0.05cm] (2) {$\scriptscriptstyle 2$};
\draw (1.75, 0.5) node[inner sep = 0.05cm] (3) {$\scriptscriptstyle 3$};
\draw (0.5, 1) node[inner sep = 0.05cm] (4) {$\scriptscriptstyle 4$};
\draw (1) -- (4);
\draw (2) -- (4);
\end{tikzpicture}} \right)
=
q^{-1}  \permind_{\begin{tikzpicture}[scale = 0.35, baseline = 0.35*-0.2cm]
\draw (0.25, 0) node[inner sep = 0.05cm] (1) {$\scriptscriptstyle 1$};
\draw (1, 0) node[inner sep = 0.05cm] (2) {$\scriptscriptstyle 2$};
\draw (1.75, 0) node[inner sep = 0.05cm] (3) {$\scriptscriptstyle 3$};
\end{tikzpicture}}  \otimes  \permind_{\begin{tikzpicture}[scale = 0.35, baseline = 0.35*-0.2cm]
\draw (0, 0) node[inner sep = 0.05cm] (1) {$\scriptscriptstyle 1$};
\end{tikzpicture}}
+ q^{-2}  \permind_{\begin{tikzpicture}[scale = 0.35, baseline = 0.35*0.3cm]
\draw (0, 0) node[inner sep = 0.05cm] (1) {$\scriptscriptstyle 1$};
\draw (1, 0) node[inner sep = 0.05cm] (2) {$\scriptscriptstyle 2$};
\draw (0.5, 1) node[inner sep = 0.05cm] (4) {$\scriptscriptstyle 3$};
\draw (1) -- (4);
\draw (2) -- (4);
\end{tikzpicture}}  \otimes  \permind_{\begin{tikzpicture}[scale = 0.35, baseline = 0.35*-0.2cm]
\draw (0, 0) node[inner sep = 0.05cm] (1) {$\scriptscriptstyle 1$};
\end{tikzpicture}}
+ (q^{-1} + 1) \permind_{\begin{tikzpicture}[scale = 0.35, baseline = 0.35*0.3cm]
\draw (0, 0) node[inner sep = 0.05cm] (1) {$\scriptscriptstyle 1$};
\draw (0.75, 0.5) node[inner sep = 0.05cm] (3) {$\scriptscriptstyle 2$};
\draw (0, 1) node[inner sep = 0.05cm] (4) {$\scriptscriptstyle 3$};
\draw (1) -- (4);
\end{tikzpicture}}  \otimes  \permind_{\begin{tikzpicture}[scale = 0.35, baseline = 0.35*-0.2cm]
\draw (0, 0) node[inner sep = 0.05cm] (1) {$\scriptscriptstyle 1$};
\end{tikzpicture}}
\]
and the $(1, 3)$-graded component
\[
\Delta_{(1, 3)}\left(  \permind_{\begin{tikzpicture}[scale = 0.35, baseline = 0.35*0.3cm]
\draw (0, 0) node[inner sep = 0.05cm] (1) {$\scriptscriptstyle 1$};
\draw (1, 0) node[inner sep = 0.05cm] (2) {$\scriptscriptstyle 2$};
\draw (1.75, 0.5) node[inner sep = 0.05cm] (3) {$\scriptscriptstyle 3$};
\draw (0.5, 1) node[inner sep = 0.05cm] (4) {$\scriptscriptstyle 4$};
\draw (1) -- (4);
\draw (2) -- (4);
\end{tikzpicture}} \right)
=
(q^{-2}+q^{-1}) \permind_{\begin{tikzpicture}[scale = 0.35, baseline = 0.35*-0.2cm]
\draw (0, 0) node[inner sep = 0.05cm] (1) {$\scriptscriptstyle 1$};
\end{tikzpicture}} \otimes  \permind_{\begin{tikzpicture}[scale = 0.35, baseline = 0.35*0.3cm]
\draw (0, 0) node[inner sep = 0.05cm] (2) {$\scriptscriptstyle 1$};
\draw (0.75, 0.5) node[inner sep = 0.05cm] (3) {$\scriptscriptstyle 2$};
\draw (0, 1) node[inner sep = 0.05cm] (4) {$\scriptscriptstyle 3$};
\draw (2) -- (4);
\end{tikzpicture}}
+ q^{-1}  \permind_{\begin{tikzpicture}[scale = 0.35, baseline = 0.35*0.3cm]
\draw (1.75, 0.5) node[inner sep = 0.05cm] (3) {$\scriptscriptstyle 1$};
\end{tikzpicture}} \otimes  \permind_{\begin{tikzpicture}[scale = 0.35, baseline = 0.35*0.3cm]
\draw (0, 0) node[inner sep = 0.05cm] (1) {$\scriptscriptstyle 1$};
\draw (1, 0) node[inner sep = 0.05cm] (2) {$\scriptscriptstyle 2$};
\draw (0.5, 1) node[inner sep = 0.05cm] (4) {$\scriptscriptstyle 3$};
\draw (1) -- (4);
\draw (2) -- (4);
\end{tikzpicture}}
+  \permind_{\begin{tikzpicture}[scale = 0.35, baseline = 0.35*-0.2cm]
\draw (0, 0) node[inner sep = 0.05cm] (4) {$\scriptscriptstyle 1$};
\end{tikzpicture}} \otimes  \permind_{\begin{tikzpicture}[scale = 0.35, baseline = 0.35*-0.2cm]
\draw (0.25, 0) node[inner sep = 0.05cm] (1) {$\scriptscriptstyle 1$};
\draw (1, 0) node[inner sep = 0.05cm] (2) {$\scriptscriptstyle 2$};
\draw (1.75, 0) node[inner sep = 0.05cm] (3) {$\scriptscriptstyle 3$};
\end{tikzpicture}}
\]
are not equivalent under the interchange of tensor factors.
\end{proof}

\begin{rem}
There are notions of commutativity and cocommutativity for Hopf monoids, see~\cite[Section 1.2.6]{AgMahlong}.  
Under $\overline{\calK}$, (co)commutative Hopf monoids become (co)commutative Hopf algebras, so Theorem~\ref{thm:noncocomm} also implies that $\cfUT$ is a noncommutative and noncocommutative Hopf monoid.
\end{rem}

\printbibliography

\end{document}